\newcommand\simto{\sto{\sim}}
\newcommand\lra{\longrightarrow}
\newcommand\ov{\overline}
\newcommand\half{\frac{1}{2}}
\font\cyr=wncyr10
\newcommand\Sha{\hbox{\cyr X}}
\newcommand\sto[1]{\stackrel{#1}{\longrightarrow}}
\newcommand\set[1]{{\left\{#1\right\}}}
\newcommand\pair[1]{\langle{#1}\rangle}
\newcommand\svec[2]{\begin{pmatrix}#1\\#2\end{pmatrix}}
\newcommand\svecc[3]{\begin{pmatrix}#1\\#2\\#3\end{pmatrix}}
\newcommand\leg[2]{\Bigl(\frac{{#1}}{#2}\Bigr)}
\newcommand\aleg[2]{\Bigl[\frac{{#1}}{#2}\Bigr]}
\newcommand\rmT{{\mathrm{T}}}
\newcommand\bfa{{\mathbf{a}}}
\newcommand\bfb{{\mathbf{b}}}
\newcommand\bfd{{\mathbf{d}}}
\newcommand\bfu{{\mathbf{u}}}
\newcommand\bfx{{\mathbf{x}}}
\newcommand\bfy{{\mathbf{y}}}
\newcommand\bfz{{\mathbf{z}}}
\newcommand\bfA{{\mathbf{A}}}
\newcommand\bfB{{\mathbf{B}}}
\newcommand\bfD{{\mathbf{D}}}
\newcommand\bfF{{\mathbf{F}}}
\newcommand\bfG{{\mathbf{G}}}
\newcommand\bfI{{\mathbf{I}}}
\newcommand\bfM{{\mathbf{M}}}
\newcommand\bfN{{\mathbf{N}}}
\newcommand\bfO{{\mathbf{O}}}
\newcommand\bfR{{\mathbf{R}}}
\newcommand\BA{{\mathbb{A}}}
\newcommand\BF{{\mathbb{F}}}
\newcommand\BQ{{\mathbb{Q}}}
\newcommand\BR{{\mathbb{R}}}
\newcommand\BZ{{\mathbb{Z}}}
\newcommand\CA{{\mathcal{A}}}
\providecommand\CD{{\mathcal{D}}}
\newcommand\CE{{\mathcal{E}}}
\newcommand\CH{{\mathcal{H}}}
\newcommand\CM{{\mathcal{M}}}
\newcommand\CO{{\mathcal{O}}}
\newcommand\CP{{\mathcal{P}}}
\newcommand\msa{\mathscr{A}}
\newcommand\msb{\mathscr{B}}
\newcommand\msi{\mathscr{I}}
\newcommand\msp{\mathscr{P}}
\newcommand\msq{\mathscr{Q}}
\newcommand\mst{\mathscr{T}}
\newcommand\fa{{\mathfrak{a}}}
\newcommand\fc{{\mathfrak{c}}}
\newcommand\ff{{\mathfrak{f}}}
\newcommand\fp{{\mathfrak{p}}}
\newcommand\diag{{\mathrm{diag}}}
\renewcommand\Im{{\mathrm{Im}\,}}
\newcommand\Ker{{\mathrm{Ker}\,}}
\newcommand\Li{{\mathrm{Li}}}
\renewcommand\mod{\, \mathrm{mod}\, }
\newcommand\rank{{\mathrm{rank}\,}}
\renewcommand\Re{{\mathrm{Re}\,}}
\newcommand\Sel{{\mathrm{Sel}}}
\newcommand\tor{{\mathrm{tor}}}
\renewcommand{\le}{\leqslant}
\renewcommand{\ge}{\geqslant}
\newtheorem{theorem}{Theorem}[section]
\newtheorem{lemma}[theorem]{Lemma}
\newtheorem{proposition}[theorem]{Proposition}
\theoremstyle{definition}
\newtheorem{definition}[theorem]{Definition}
\theoremstyle{remark}
\newtheorem{remark}[theorem]{Remark}
\numberwithin{equation}{section}
\title{On the quadratic twist of elliptic curves with full $2$-torsion}
\author{Zhangjie Wang}
\address{School of Mathematics and statistics, Shaanxi Normal University, Xi'an 710119, China}
\email{zhangjiewang@snnu.edu.cn}
\author{Shenxing Zhang}
\address{School of Mathematics, Hefei University of Technology, Hefei, Anhui 230000, China}
\email{zhangshenxing@hfut.edu.cn}
\date{\today}
\keywords{Shafarevich-Tate groups; full 2-torsion; Cassels pairing; Gauss genus theory; equidistribution property; residue symbols}
\subjclass[2020]{Primary 11G05; Secondary 11R11, 11R29, 11N99}
\begin{document}
\maketitle
\tableofcontents

\begin{abstract}
Let $E: y^2=x(x-a^2)(x+b^2)$ be an elliptic curve with full $2$-torsion group, where $a$ and $b$ are coprime integers and $2(a^2+b^2)$ is a square. Assume that the $2$-Selmer group of $E$ has rank two. We characterize all quadratic twists of $E$ with Mordell-Weil rank zero and $2$-primary Shafarevich-Tate groups $(\mathbb Z/2\mathbb Z)^2$, under certain conditions. We also obtain a distribution result of these elliptic curves.
\end{abstract}

\section{Introduction}

A positive integer $n$ is called {\em a congruent number} if and only if it is the area of a right rational triangle.
This is equivalently to say, the congruent elliptic curve $y^2=x^3-n^2x$ has positive Mordell-Weil rank.
The classical descent method provides a way to construct non-congruent numbers, see~\cite{Feng1996, LiTian2000, OuyangZhang2014, OuyangZhang2015}.
In \cite{Wang2016}, the first author used Cassels pairing to characterize all congruent elliptic curves $y^2=x^3-n^2x$ with Mordell-Weil rank zero and second minimal $2$-primary Shafarevich-Tate group, where all prime divisors of $n$ are congruent to $1$ modulo $4$.

Fujiware in \cite{Fujiwara1998} defined the generalized concept, a $\theta$-congruent number by considering rational triangles with an angle $\theta$.
For such a triangle, $\cos\theta=s/r$ is rational number, $s,r\in\BZ,r>0,\gcd(r,s)=1$.
A positive integer $n$ is called {\em $\theta$-congruent} if $n\sqrt{r^2-s^2}$ is the area of a rational triangle with an
angle $\theta$.
This is equivalently to say, the elliptic curve 
\[y^2=x(x+(r+s)n)(x-(r-s)n)\] has positive Mordell-Weil rank.
The goal of this paper is to generalize the result in \cite{Wang2016} to non-$\theta$-congruent case.
More precisely, we will show that for certain $n$, $n$ is non-congruent with second minimal $2$-primary Shafarevich-Tate group, if and only if $n$ is non-$\theta$-congruent with second minimal $2$-primary Shafarevich-Tate group, where both of $\sqrt2\sin(\theta/2)$ and $\sqrt2\cos(\theta/2)$ are rational.

Let $(a,b,c)$ be a primitive triple of positive integers such that $a^2+b^2=2c^2$.
By elementary number theory, this is equivalent to say,
\[a=|\alpha^2-2\alpha\beta-\beta^2|,\quad b=|\alpha^2+2\alpha\beta-\beta^2|,\quad c=\alpha^2+\beta^2\]
for some coprime integers $\alpha,\beta$ with different parities.
% $7^2+17^2=2\times 13^2$.
Denote by
\[E: y^2=x(x-a^2)(x+b^2)\]
an elliptic curve with full $2$-torsion group, and
\[E^{(n)}: y^2=x(x-a^2n)(x+b^2n)\]
a quadratic twist of $E$, where $n$ is a positive square-free integer.
If 
\[\sin\Bigl(\frac\theta2\Bigr)=\frac a{\sqrt 2c},\quad
\cos\Bigl(\frac\theta2\Bigr)=\frac b{\sqrt 2c}
\quad\text{and}\quad
\tan\Bigl(\frac\theta2\Bigr)=\frac ab,\]
then $\cos\theta=\frac{b^2-a^2}{b^2+a^2}$.
Therefore, $n$ is non-$\theta$-congruent if and only if $\rank_\BZ E^{(n)}=0$.
%
%When $a=b=1$, this is just the congruent elliptic curve.
%We will construct a family of positive square-free integers $n$, such that $n$ is both non-congruent and non-$2\arctan\frac ab$-congruent.

\subsection{Rank zero twists}
%\subsection{Rank zero twists with second minimal Shafarevich-Tate group}

When $n>1$, denote by $\CA$ the ideal class group of $K=\BQ(\sqrt{-n})$ and
\[h_{2^m}(n):=\dim_{\BF_2} \CA^{2^{m-1}}/\CA^{2^m}\]
its $2^m$-rank for a positive integer $m$.
Denote by $\Sel_2\bigl(E^{(n)}/\BQ\bigr)$ the $2$-Selmer group of $E^{(n)}$ over $\BQ$.

\begin{theorem}[=Theorems~\ref{thm:main_A} and \ref{thm:main_B}]
\label{thm:main}
Assume that $\Sel_2(E/\BQ)\cong(\BZ/2\BZ)^2$.
Let $n\equiv1\bmod8$ be a positive square-free integer coprime to $abc$ where each prime factor of $n$ is a quadratic residue modulo every prime factor of $abc$.

\textup{(A)} If all prime factors of $n$ are congruent to $\pm1$ modulo $8$, then the following are equivalent:
\begin{enumerate}
\item $\rank_\BZ E^{(n)}(\BQ)=0$ and $\Sha(E^{(n)}/\BQ)[2^\infty]\cong(\BZ/2\BZ)^2$;
\item $h_4(n)=1$ and $h_8(n)=0$.
\end{enumerate}

\textup{(B)} If all prime factors of $n$ are congruent to $1$ modulo $4$, then the following are equivalent:
\begin{enumerate}
\item $\rank_\BZ E^{(n)}(\BQ)=0$ and $\Sha(E^{(n)}/\BQ)[2^\infty]\cong(\BZ/2\BZ)^2$;
\item $h_4(n)=1$ and $h_8(n)\equiv\frac{d-1}4\bmod2$.
\end{enumerate}
Here $d$ is the odd part of $d_0\mid 2n$ such that the ideal class $[(d_0,\sqrt{-n})]$ is the non-trivial element in $\CA[2]\cap\CA^2$.
\end{theorem}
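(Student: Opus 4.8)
The plan is to split condition~(1) into two independent pieces — a size computation for the $2$-Selmer group and a non-degeneracy statement for the Cassels--Tate pairing — and to match each piece with one half of condition~(2). The hypothesis $\Sel_2(E/\BQ)\cong(\BZ/2\BZ)^2$ records that the base curve already has rank zero with $\Sha(E/\BQ)[2]=0$, so all of its Selmer classes come from $E(\BQ)[2]$; this normalizes the residue-symbol data attached to $a,b,c$, which feeds into the twisted computation. The same full $2$-torsion persists for every twist, so $E^{(n)}(\BQ)[2]=(\BZ/2\BZ)^2$ contributes a canonical copy of $(\BZ/2\BZ)^2$ inside $\Sel_2(E^{(n)}/\BQ)$. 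From the descent exact sequence
\[0\to E^{(n)}(\BQ)/2E^{(n)}(\BQ)\to\Sel_2(E^{(n)}/\BQ)\to\Sha(E^{(n)}/\BQ)[2]\to0\]
one sees that condition~(1) is equivalent to the conjunction of (i) $\Sel_2(E^{(n)}/\BQ)\cong(\BZ/2\BZ)^4$ and (ii) the Cassels--Tate pairing being non-degenerate on the $2$-dimensional quotient $\Sel_2(E^{(n)}/\BQ)/E^{(n)}(\BQ)[2]$. Non-degeneracy is the crucial link: since rational points and torsion lie in the radical, it forces the image of $E^{(n)}(\BQ)/2E^{(n)}(\BQ)$ to vanish in the quotient (so $\rank_\BZ E^{(n)}(\BQ)=0$ and $\Sha[2]\cong(\BZ/2\BZ)^2$), while the radical of the induced alternating form on $\Sha[2]$ is exactly the image of multiplication by $2$ from $\Sha[4]$, so non-degeneracy also pins $\Sha[2^\infty]$ at $(\BZ/2\BZ)^2$.

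For piece~(i) I would carry out the full $2$-descent on $E^{(n)}:y^2=x(x-a^2n)(x+b^2n)$ using the three rational $2$-torsion points, realizing $\Sel_2(E^{(n)}/\BQ)$ as the subgroup of triples in $(\BQ^\times/\BQ^{\times2})^2$ with square product that are everywhere locally solvable. Under the hypotheses $n\equiv1\bmod8$, $\gcd(n,abc)=1$, and the assumption that each prime factor of $n$ is a quadratic residue modulo every prime factor of $abc$, the local conditions at $2$ and at the primes dividing $abc$ trivialize, so the Selmer group is cut out by a Monsky-type matrix $M$ over $\BF_2$ whose off-diagonal entries are the symbols $\leg{p_i}{p_j}$ among the prime factors $p_1,\dots,p_t$ of $n$. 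The goal is to identify $\dim_{\BF_2}\Sel_2(E^{(n)}/\BQ)-2$ with the corank of $M$, and then to compare $M$ with the Rédei matrix of $-n$: by Rédei--Reichardt theory $h_4(n)=\dim_{\BF_2}\CA^2/\CA^4$ equals the corank of that matrix, whose entries are built from the same residue symbols. Matching the two matrices up to the fixed two-dimensional torsion contribution yields
\[\Sel_2(E^{(n)}/\BQ)\cong(\BZ/2\BZ)^4\iff h_4(n)=1.\]

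The substance of the theorem lies in piece~(ii). When $h_4(n)=1$ the quotient $\Sel_2/E^{(n)}(\BQ)[2]$ is spanned by two explicit classes $\xi_1,\xi_2$, and, the induced form being alternating on a $2$-dimensional space, non-degeneracy is equivalent to the single value $\pair{\xi_1,\xi_2}=1$. I would compute this Cassels--Tate pairing as a sum of local pairings $\sum_v\pair{\cdot,\cdot}_v$, choosing local points on the descent torsors and evaluating the resulting Hilbert symbols; global reciprocity should collapse the sum to a product of local symbols at the primes dividing $2n$, which I would then rewrite using Legendre and quartic residue symbols $\aleg{\cdot}{\cdot}$. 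On the class-group side, when $h_4(n)=1$ the group $\CA[2]\cap\CA^2$ is generated by the single class $[(d_0,\sqrt{-n})]$ of the theorem, and whether this class is a fourth power — equivalently the value of $h_8(n)$ — is governed by a Rédei (triple) symbol that is itself a product of the same quartic residue symbols. The final identification is that the Cassels pairing equals this governing symbol in case~(A), and equals it shifted by the correction term $\frac{d-1}4\bmod2$ in case~(B).

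The main obstacle is precisely this last identification: producing a closed local formula for the Cassels--Tate pairing on these specific torsors, pushing it through reciprocity to a manageable product of residue symbols, and recognizing that product as the arithmetic invariant controlling the $8$-rank. The case distinction is forced here. For primes $p\equiv\pm1\bmod8$ (case~(A)) the additional congruence makes the local contributions at $2$ and the quartic symbols combine cleanly, so that non-degeneracy of the pairing is equivalent to $h_8(n)=0$. For primes only known to be $\equiv1\bmod4$ (case~(B)) the weaker congruence leaves a genuine contribution from $2$ and from the divisor $d$, which I expect to surface as the term $\frac{d-1}4\bmod2$, so that non-degeneracy becomes equivalent to $h_8(n)\equiv\frac{d-1}4\bmod2$. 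Controlling this last term — and verifying that it is independent of the choices of local points made in the pairing computation — is the most delicate point of the argument.
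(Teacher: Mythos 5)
Your proposal is correct in outline and follows essentially the same route as the paper: reduce condition (1) to ``$\dim_{\BF_2}\Sel_2'\bigl(E^{(n)}\bigr)=2$ plus non-degeneracy of the Cassels pairing'', identify the Selmer dimension with $h_4(n)=1$ by comparing a Monsky-type matrix with the R\'edei matrix, and then evaluate the pairing on two explicit generators as a sum of local Hilbert symbols, matching the result against the $8$-rank criterion (with the contribution at $2$ producing the $\aleg{2}{d}=\frac{d-1}{4}\bmod 2$ correction in case (B), exactly as you anticipate). What remains is only the execution: the explicit local points and tangent lines on the torsors and the verification that all local pairings away from $2$ and $n$ vanish, which the paper carries out but your plan leaves as the acknowledged delicate step.
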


\begin{remark}
(1) When $(a,b)=(1,1),(7,23),(23,47),(119,167),(167,223),(287,359)$, we have $\Sel_2(E/\BQ)\cong(\BZ/2\BZ)^2$.
%Among all positive integers $\ell$ no larger than $50$, there are $19$ pairs $(a,b)$ such that Assumption~\ref{ass:ab} holds.

(2) In Theorem~\ref{thm:main}(B), if $h_4(n)=1$, then the non-trivial element in $\CA[2]\cap\CA^2$ is $[(d_0,\sqrt{-n})]$ for some positive divisor $d_0$ of $2n$.
If $d_0'$ is another positive divisor of $2n$ such that $[(d_0,\sqrt{-n})]=[(d_0',\sqrt{-n})]$, then $d_0d_0'=n$ or $4n$.
See \S\ref{Gauss genus theory}.
\end{remark}

We will first show that $E^{(n)}_\tor(\BQ)\cong(\BZ/2\BZ)^2$ in \S\ref{Torsion subgroup}.
In \S\ref{2-descent method}, we will study the local solvability of homogeneous spaces and then express the $2$-Selmer group as the kernel of the generalized Monsky matrix $\CM_n$.
Then we will give the proof of Theorem~\ref{thm:main} in \S\ref{Second minimal Shafarevich-Tate group}.
The strategy is similar to \cite{Wang2016}.

\subsection{Distribution}

We will also study the distribution of $n$ satisfying the equivalent condition in Theorem~\ref{thm:main}(B), which generalizes the result in \cite{Wang2017}.
Denote by
\begin{itemize}
\item $C_k(x)$ the set of positive square-free integers $n\le x$ with exactly $k$ prime factors;
\item $\msq_k(x)$ the set of $n\in C_k(x)$ coprime to $abc$ such that each prime factor of $n\equiv1\bmod8$ is a quadratic residue modulo every prime factor of $abc$ and congruent to $1$ modulo $4$;
\item $\msp_k(x)$ the set of all $n\in\msq_k(x)$ such that Theorem~\ref{thm:main}(B)(2) holds.
%\item $u_k:=\prod_{1\le i\le k/2}(1-2^{1-2i})$. Then $u:=\lim_{k\to+\infty} u_k=0.419\dots$.
\end{itemize}
We will use the standard symbols in analytic number theory: ''$\sim, \ll, O(\cdot), o(\cdot), \Li(x)$'', which can be found in \cite{IrelandRosen1990}.
The equidistribution property of Legendre symbols in \cite{Rhoades2009} implies
\begin{equation}\label{eq:number-C-k}
\#C_k(x)\sim\frac{x(\log \log x)^{k-1}}{(k-1)!\log x}.
\end{equation}

\begin{theorem}\label{thm:dist}
Assume that $\Sel_2(E/\BQ)\cong(\BZ/2\BZ)^2$.
Then
\[\#\msp_k(x)\sim 2^{-k\ell-k-2}\left(u_k+ (2^{-1}-2^{-k})u_{k-1}\right) \cdot \#C_k(x),\]
where $\ell$ is the number of different prime factors of $abc$ and
\[u_k:=\prod_{1\le i\le k/2}(1-2^{1-2i}).\]
\end{theorem}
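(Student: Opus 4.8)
The plan is to follow the method of \cite{Wang2017}: rewrite the defining condition of $\msp_k(x)$ as a conjunction of splitting conditions on the prime factors of $n$ together with rank and parity conditions on a matrix of residue symbols, and then evaluate the resulting count by the equidistribution of those symbols relative to the Landau asymptotic~\eqref{eq:number-C-k}. By Theorem~\ref{thm:main}(B), for $n=p_1\cdots p_k\in\msq_k(x)$ we have $n\in\msp_k(x)$ if and only if $h_4(n)=1$ and $h_8(n)\equiv\frac{d-1}4\bmod2$, so the first task is to encode these two conditions through the Rédei matrix of \S\ref{Gauss genus theory}. Writing $\alpha_j$ for the class of $\leg2{p_j}$ and $s_{ij}$ for the class of $\leg{p_i}{p_j}=\leg{p_j}{p_i}$ in $\BF_2$, the genus pairing on $\CA[2]$ is represented by a $k\times k$ matrix $R=R(n)$ over $\BF_2$ whose first row is $(\alpha_1,\dots,\alpha_k)$ and whose remaining rows encode the symmetric symbols $s_{ij}$, with diagonal entries chosen so that each of these rows sums to zero. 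Since $n\equiv1\bmod8$ forces $\sum_j\alpha_j=0$, every row of $R$ sums to zero, so $\rank_{\BF_2}R\le k-1$ and $h_4(n)=k-\rank_{\BF_2}R$. Thus $h_4(n)=1$ is exactly the maximal-rank condition $\rank_{\BF_2}R=k-1$, while, conditional on $h_4(n)=1$, one has $h_8(n)\in\{0,1\}$ and its parity is computed by a further $8$-rank governing symbol attached to the unique nonzero vector of $\ker R$ and to the divisor $d\mid 2n$.

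Next I would invoke the equidistribution of residue symbols, in the form used in \cite{Rhoades2009} and \cite{Wang2017}: as $n$ ranges over $C_k(x)$, the data $(p_j\bmod 8)_j$, the splitting of each $p_j$ in $\BQ(\sqrt q)$ for $q\mid abc$, and the $\binom{k}{2}$ symbols $\bigl(\leg{p_i}{p_j}\bigr)_{i<j}$ become jointly equidistributed and asymptotically independent. Consequently $\#\msp_k(x)/\#C_k(x)$ converges to the product of the local densities cutting out $\msq_k$ inside $C_k$ and the probability that a matrix $R$ drawn from the induced model satisfies the genus conditions. The local densities contribute the factor $2^{-k\ell-k-1}$: a factor $2^{-k\ell}$ from the $k\ell$ quadratic-residue conditions of $p_1,\dots,p_k$ modulo the $\ell$ prime factors of $abc$, a factor $2^{-k}$ from the $k$ conditions $p_j\equiv1\bmod4$, and a factor $2^{-1}$ from $n\equiv1\bmod8$ (equivalently $\sum_j\alpha_j=0$); the Selberg--Delange constant attached to primes confined to these fixed residue classes is $1$, so these powers of $2$ are exact in the limit.

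It then remains to evaluate the matrix probability. Conditional on $\msq_k$, so with $\sum_j\alpha_j=0$, the entries $\alpha_j$ and $s_{ij}$ are independent and uniform over $\BF_2$, and I would compute by the standard $\BF_2$ rank recursions that
\[\Pr\bigl[\rank_{\BF_2}R=k-1\bigr]=u_k+(2^{-1}-2^{-k})\,u_{k-1},\]
the products $u_k=\prod_{1\le i\le k/2}(1-2^{1-2i})$ being precisely the densities of nondegenerate (or minimal-corank) alternating forms over $\BF_2$ that govern the symmetric block of $R$. Finally, conditional on $h_4(n)=1$, the governing symbol controlling the parity of $h_8(n)$ is equidistributed over $\{\pm1\}$ and asymptotically independent of the event $\{\rank_{\BF_2}R=k-1\}$, so $h_8(n)\equiv\frac{d-1}4\bmod2$ holds with probability $\tfrac12$. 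Multiplying the local density $2^{-k\ell-k-1}$ by $\tfrac12\bigl(u_k+(2^{-1}-2^{-k})u_{k-1}\bigr)$ gives the constant $2^{-k\ell-k-2}\bigl(u_k+(2^{-1}-2^{-k})u_{k-1}\bigr)$, and the asymptotic for $\#\msp_k(x)$ follows from~\eqref{eq:number-C-k}.

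I expect the main difficulty to be the $h_8$ step. The parity of $h_8(n)$ is governed by a quartic (Rédei) residue symbol that depends on the non-canonical kernel vector of $R$ and on the divisor $d$, and it lives in a governing extension whose degree grows with $k$; establishing that this symbol equidistributes and is independent of the event $h_4(n)=1$ requires pushing the equidistribution input of \cite{Rhoades2009} beyond plain Legendre symbols and carrying out a case analysis over the possible kernel vectors, with the dependence on $d$ averaging out to the clean factor $\tfrac12$. A secondary obstacle is the random-matrix bookkeeping: because $R$ is drawn from the affine family with prescribed vanishing row sums rather than from all symmetric matrices, the count of maximal-rank $R$ must be organised so as to produce the closed form $u_k+(2^{-1}-2^{-k})u_{k-1}$, and the conditioning needed to define $d$ must be kept consistent with the parity count throughout.
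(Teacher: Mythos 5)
Your overall plan is the same as the paper's: reduce via Theorem~\ref{thm:main}(B) to the conditions $h_4(n)=1$ and $h_8(n)\equiv\frac{d-1}4\bmod2$, split according to the rank of the matrix $\bfA_n$ (rank $k-1$ contributing the $u_k$ term, rank $k-2$ with $\bfb_2\notin\Im\bfA$ contributing $(2^{-1}-2^{-k})u_{k-1}$ via the Brown--Calkin--James count of symmetric matrices), multiply by the local densities $2^{-k\ell-k-1}$ and by the factor $\tfrac12$ from the $8$-rank condition, and your final bookkeeping does reproduce the constant. However, there is a genuine gap exactly where you flag ``the main difficulty'': the joint equidistribution of the $8$-rank governing symbol with the Legendre-symbol data is asserted, not proved, and this is the actual mathematical content of the theorem --- it occupies all of \S\ref{Equidistribution of residue symbols} of the paper. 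The statement ``the governing symbol is equidistributed over $\{\pm1\}$ and asymptotically independent of $\{h_4(n)=1\}$'' does not follow from the Legendre-symbol equidistribution of \cite{Rhoades2009}, because after translating $h_8(n)$ via \cite[Theorem~3.3]{JungYue2011} the relevant condition is a \emph{quartic} residue condition such as $\leg d{d'}_4\leg{d'}d_4=-1$, which is not a congruence condition on the $p_i$ modulo any fixed modulus and cannot be handled prime-by-prime.

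Concretely, the missing proof (Theorem~\ref{thm:independence}) requires: (i) lifting to $\BZ[I]$ and replacing the rational quartic symbols by quadratic symbols of Gaussian primes via $\leg{p_i}{p_j}_4\leg{p_j}{p_i}_4=\leg{\lambda_j}{\lambda_i}_2$ (Lemma~\ref{lem:identify-Ck}); (ii) the key independence statement (Lemma~\ref{lem:different-characters}) that $\leg{x}{\pi}_2$ and $\leg{\bfN x}{p}$ are \emph{distinct} nontrivial characters on $(\BZ[I]/p\BZ[I])^\times$, which is what makes the quartic condition cut out exactly half of each fibre of the Cremona--Odoni comparison map on top of the condition $\aleg{\bfN\beta}{\bfN\lambda_j}=B_{jk}$ (Lemmas~\ref{lem:size-of-fiber} and \ref{lem:number-A-eta}); and (iii) the analytic control of the resulting character sums by zero-free regions, the Siegel and Page theorems over $\BQ(I)$, and the explicit formula \eqref{eq:explicit-formula-of-psi-x-chi}, to show that the non-principal characters contribute only error terms. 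A secondary point you elide: the target parity $\frac{d-1}4$ itself depends on $n$ through $d$, so one cannot average over $h_8$ alone; the paper handles this by fixing the residue vector $\alpha\bmod{16}$ and the matrix $\bfB$ simultaneously (the sets $C_k(x,\alpha,\bfB)$), so that $d\bmod 8$ is determined before the quartic symbol is averaged. Without carrying out step (ii) and the L-function estimates, the factor $\tfrac12$ in your constant is unjustified.
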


We will use the method in \cite{CremonaOdoni1989} to show the equidistribution property of residue symbols in \S~\ref{Equidistribution of residue symbols} and then use this to prove Theorem~\ref{thm:dist} in \S~\ref{Distribution result}.

\begin{remark}
If one want to study the distribution of $n$ satisfying the equivalent condition in Theorem~\ref{thm:main}(A), one need a characterization of $h_8(n)$ in terms of residue symbols similar to \cite{JungYue2011}.
\end{remark}

\subsection{Notations}

We will not list the notations appeared above.
\begin{itemize}
\item $n=p_1\cdots p_k$ the prime decomposition of $n$.
\item $abc=q_1^{t_1}\cdots q_\ell^{t_\ell}$ the prime decomposition of $abc$.
\item $\gcd(m_1,\dots,m_t)$ the greatest common divisor of integers $m_1,\dots,m_t$.
\item $\Sel_2'\bigl(E^{(n)}\bigr)=\Sel_2\bigl(E^{(n)}\bigr)/E^{(n)}(\BQ)[2]$ the pure $2$-Selmer group of $E^{(n)}$, see \eqref{eq:pure2selmer}.
\item $D_\Lambda$ the homogeneous space associated to a rational triple $(d_1,d_2,d_3)$, see~\eqref{eq:D-Lambda}.
\item $(\alpha,\beta)_v$ the Hilbert symbol, $\alpha,\beta\in\BQ_v^\times$.
\item $[\alpha,\beta]_v$ the additive Hilbert symbol, i.e., the image of $(\alpha,\beta)_v$ under the isomorphism $\set{\pm1}\simto\BF_2$.
\item $\leg\alpha\beta=\prod_{p\mid\beta}(\alpha,\beta)_p$ the Jacobi symbol with $p\mid \beta$ counted with multiplicity, where $\gcd(\alpha,\beta)=1$ and $\beta>0$.
\item $\aleg\alpha\beta$ the additive Jacobi symbol, i.e., the image of $\leg\alpha\beta$ under the isomorphism $\set{\pm1}\simto\BF_2$.
\item $\CD(K)$ the set of positive square-free divisors of $2n$.
\item ${\bf0}=(0,\dots,0)^\rmT$ and ${\bf1}=(1,\dots,1)^\rmT$.
\item $\bfI$ the identity matrix and $\bfO$ the zero matrix.
\item $\bfA=\bfA_n$ a matrix associated to $n$, see~\eqref{eq:A-n}.
\item $\bfR_n$ the R\'edei matrix of $K=\BQ(\sqrt{-n})$, see \eqref{eq:Redei-matrix}.
\item $\bfD_u=\diag\Bigl\{\aleg u{p_1},\dots,\aleg u{p_k}\Bigr\}$.
\item $\bfb_u=\bfD_u{\bf1}=\Bigl(\aleg u{p_1},\dots,\aleg u{p_k}\Bigr)$.
\item $\bfM_n$ the Monsky matrix associated to $n$, see~\eqref{eq:bfM-n}.
\item $\CM_n$ the generalized Monsky matrix associated to $E^{(n)}$, see~\eqref{eq:CM-n}.
\item $I=\sqrt{-1}$.
\item $\CP$ the set of primary primes of $\BZ[I]$ with positive imaginary part.
\item $\leg\alpha\lambda_2$ the quadratic residue symbol over $\BZ[I]$, see~\eqref{eq:quadratic-residue-symbol}.
\item $\leg\alpha\lambda_4$ the quartic residue symbol over $\BZ[I]$, see~\eqref{eq:quartic-residue-symbol}.
\item $\leg{a}{d}_4:=\leg{a}{\lambda}_4$ the rational quartic residue symbol, see~\eqref{eq:rational-quartic-residue-symbol}.
\item $\Lambda(\fa)$ the Mangoldt function, see~\eqref{eq:Mangoldt-function}.
\item $\chi_0$ the trivial character modulo a given integral ideal, see \S~\ref{Analytic results}.
\item $\psi(x,\chi)=\sum_{\bfN\fa\le x}\chi(\fa)\Lambda(\fa)$, see~\eqref{eq:psi-x-chi}.
\item $C_k(x,\alpha,\bfB),C_k'(x,\alpha,\bfB),T_k(x),T_k'(x)$ sets associated to $x,\alpha,\bfB$, see \S~\ref{Equidistribution of residue symbols}.
\item $\binom k2=k(k-1)/2$ the binomial coefficient.
\end{itemize}

\section{Preliminaries}
\label{Preliminaries}

\subsection{Gauss genus theory}
\label{Gauss genus theory}

In this subsection, we will recall Gauss genus theory, which can be found in \cite[\S~3]{Wang2016} for details.
For our purpose, assume that $n=p_1\cdots p_k\equiv1\bmod 4$.
Denote by $\CA$ the ideal class group of $K=\BQ(\sqrt{-n})$.
Denote by $\CD(K)$ the set of positive square-free divisors of $2n$.
The classical Gauss genus theory tells that
\[\CA[2]=\set{[(d,\sqrt{-n})]: d\in\CD(K)}
\quad\text{and}\quad
h_2(n)=\dim_{\BF_2}\CA[2]=t-1.\]

Denote by $p_{k+1}=2$ and define the R\'edei matrix
\begin{equation}\label{eq:Redei-matrix}
\bfR_n=\bigl([p_j,-n]_{p_i}\bigr)_{i,j}\in M_{k\times (k+1)}(\BF_2).
\end{equation}

\begin{proposition}[{\cite{Redei1934}}]
\label{pro:4rank}
We have
\[\begin{array}{rcccl}
\Ker\bfR_n&\stackrel{\sim}{\longleftarrow}&\CD(K)\cap\bfN_{K/\BQ} K^\times
&\lra&\CA[2]\cap \CA^2\\
\bigl(v_{p_1}(d),\dots,v_{p_{k+1}}(d)\bigr)&\longmapsfrom&d&\longmapsto&[(d,\sqrt{-n})],
\end{array}\]
where the second arrow is a two-to-one onto homomorphism with kernel $\set{1,n}$.
Hence $h_4(n)=k-\rank\bfR_n$.
\end{proposition}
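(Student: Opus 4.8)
The plan is to establish the two claimed arrows separately, using the Gauss genus theory recalled just above together with the Hasse norm theorem and the principal genus theorem, and then to combine the resulting cardinality count with the structure of the finite abelian group $\CA$.

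First I would treat the left arrow, the valuation map $d\mapsto(v_{p_1}(d),\dots,v_{p_{k+1}}(d))$. On all of $\CD(K)$, a group of order $2^{k+1}$ under ``multiply and take the square-free part'', this map is manifestly a group isomorphism onto $\BF_2^{k+1}$, so the point is to identify the image of the subgroup $\CD(K)\cap\bfN_{K/\BQ}K^\times$ with $\Ker\bfR_n$. Since $K/\BQ$ is quadratic, the Hasse norm theorem says that $d$ is a global norm iff $(d,-n)_v=1$ for every place $v$. For $d>0$ dividing $2n$ the symbol is trivial at $\infty$ (as $d>0$) and at every prime $p\nmid 2n$ (both arguments being $p$-adic units), so the only conditions come from $p\in\set{p_1,\dots,p_k,2}$. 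Writing $d=\prod_j p_j^{v_{p_j}(d)}$ and using biadditivity of $[\,\cdot\,,\,\cdot\,]_{p_i}$, the condition $(d,-n)_{p_i}=1$ is exactly the $i$-th row equation of $\bfR_n$ evaluated on the valuation vector, while the remaining condition at $p_{k+1}=2$ is redundant by the reciprocity (product) formula $\prod_{p\mid 2n}(d,-n)_p=1$. Hence $d\in\bfN_{K/\BQ}K^\times$ iff its valuation vector lies in $\Ker\bfR_n$, which gives the isomorphism.

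Next, for the right arrow I would restrict the genus homomorphism $d\mapsto[(d,\sqrt{-n})]$, which by the recalled Gauss genus theory already maps $\CD(K)$ onto $\CA[2]$, to the norm subgroup. The principal genus theorem identifies $[(d,\sqrt{-n})]\in\CA^2$ with the condition $d\in\bfN_{K/\BQ}K^\times$, so the restricted map surjects onto $\CA[2]\cap\CA^2$. Because $\dim_{\BF_2}\CD(K)=k+1$ while $h_2(n)=\dim_{\BF_2}\CA[2]=k$, the kernel on all of $\CD(K)$ is one-dimensional; since $(\sqrt{-n})=\prod_{i=1}^{k}(p_i,\sqrt{-n})$ shows $(n,\sqrt{-n})=(\sqrt{-n})$ is principal, that kernel is precisely $\set{1,n}$. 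As $n=\bfN_{K/\BQ}(\sqrt{-n})$ already lies in the norm subgroup, the restricted map is likewise two-to-one onto $\CA[2]\cap\CA^2$ with kernel $\set{1,n}$.

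Finally I would assemble the count. Combining the two arrows gives $\#(\CA[2]\cap\CA^2)=\tfrac12\#\Ker\bfR_n=2^{k-\rank\bfR_n}$. To convert this into the stated formula, note that for the finite abelian $2$-group $\CA$ the squaring map identifies $\CA^2\cap\CA[2]=(\CA^2)[2]$, whose $\BF_2$-dimension counts the cyclic factors of $\CA$ of order $\ge4$ and therefore equals $h_4(n)=\dim_{\BF_2}\CA^2/\CA^4$. Hence $h_4(n)=k-\rank\bfR_n$. I expect the main obstacle to be the second paragraph's appeal to the principal genus theorem together with the Hilbert-symbol bookkeeping: one must verify that the $k$ rows of $\bfR_n$ encode exactly the global norm condition, which hinges on checking triviality of the local symbols outside $2n\infty$ and on using reciprocity to discard the condition at $2$. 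The homomorphism property, the identification of the kernel $\set{1,n}$, and the final group-theoretic translation to $h_4(n)$ are then routine.
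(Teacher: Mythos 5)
The paper gives no proof of this proposition: it is quoted from R\'edei's 1934 paper, with the surrounding genus theory referred to \cite[\S 3]{Wang2016}. So there is nothing in the text to compare your argument against line by line; judged on its own, your proof is correct and is the standard one. The left arrow is handled exactly right: for $d\in\CD(K)$ the Hilbert symbol $(d,-n)_v$ is trivial at $\infty$ (as $d>0$) and at all $v\nmid 2n$, bilinearity turns the conditions at $p_1,\dots,p_k$ into the rows of $\bfR_n$ acting on the valuation vector, and the product formula discharges the condition at $p_{k+1}=2$; the Hasse norm theorem for the cyclic extension $K/\BQ$ then converts the everywhere-local condition into membership in $\bfN_{K/\BQ}K^\times$. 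For the right arrow, your appeal to the principal genus theorem is slightly compressed --- what one actually uses is that the genus characters of the ambiguous class $[(d,\sqrt{-n})]$ are computed by the symbols $(d,-n)_v$ at the ramified places, since $(d,\sqrt{-n})\cdot\overline{(d,\sqrt{-n})}=(d)$, so that vanishing of all genus characters (membership in $\CA^2$) is equivalent to $d$ being a local norm everywhere --- but this is exactly the intended content and closes the loop with the left arrow. The kernel computation ($\dim_{\BF_2}\CD(K)=k+1$ versus $h_2(n)=k$, together with $(n,\sqrt{-n})=(\sqrt{-n})$ principal and $n=\bfN_{K/\BQ}(\sqrt{-n})$) and the final translation $\#(\CA[2]\cap\CA^2)=2^{k-\rank\bfR_n}$, $\CA[2]\cap\CA^2=(\CA^2)[2]$, $\dim_{\BF_2}(\CA^2)[2]=\dim_{\BF_2}\CA^2/\CA^4=h_4(n)$ are all correct. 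No gaps.
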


\begin{proposition}[{\cite[Proposition~3.6]{Wang2016}}]\label{pro:8rank}
For any $2^rd\in \CD(K)\cap\bfN_{K/\BQ}K^\times$ with odd $d$, let $(\alpha,\beta,\gamma)$ be a primitive triple of positive integers satisfying
\[d\alpha^2+\frac{n}{d}\beta^2=2^r\gamma^2.\]
Then $[(2^rd,\sqrt{-n})]\in\CA^4$ if and only if
\[\bfb_\gamma=\Bigl(\aleg\gamma{p_1},\dots,\aleg\gamma{p_k}\Bigr)^\rmT\in\Im\bfR_n.\]
\end{proposition}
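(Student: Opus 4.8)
The plan is to turn the hypothesis $2^rd\in\CD(K)\cap\bfN_{K/\BQ}K^\times$ into an \emph{explicit} square root of the class $[(2^rd,\sqrt{-n})]$ built from the triple $(\alpha,\beta,\gamma)$, and then to decide membership in $\CA^4$ by evaluating genus characters on that square root. Write $\fa=(2^rd,\sqrt{-n})$, an ideal of norm $2^rd$ whose class lies in $\CA[2]\cap\CA^2$ by Proposition~\ref{pro:4rank}. Multiplying the given relation by $d$ gives $(d\alpha)^2+n\beta^2=2^rd\gamma^2$, so the element $\xi=d\alpha+\beta\sqrt{-n}\in K$ has $\bfN_{K/\BQ}\xi=2^rd\gamma^2$. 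Factoring this principal ideal, I expect $(\xi)=\fa\cdot\mathfrak g^2$ with $\bfN_{K/\BQ}\mathfrak g=\gamma$; since $(\xi)$ is principal this yields $[\fa]=[\mathfrak g]^{-2}$, i.e. $[\mathfrak g]^{-1}$ is a square root of $[\fa]$ in $\CA$.

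Next I reduce the fourth-power condition to a statement about $\mathfrak g$ alone. The set of square roots of $[\fa]$ is the coset $[\mathfrak g]^{-1}\,\CA[2]$, and $[\fa]\in\CA^4$ holds precisely when some square root of $[\fa]$ is itself a square, i.e. when $[\mathfrak g]^{-1}\CA[2]\cap\CA^2\ne\emptyset$. Since $\CA^2$ and $\CA[2]$ are subgroups of $\CA$, this is equivalent to $[\mathfrak g]\in\CA^2\cdot\CA[2]$.

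Finally I detect this via genus theory. Let $\Psi\colon\CA\to\BF_2^k$ be the map whose $i$-th coordinate is the genus character attached to the prime discriminant $p_i^*=(-1)^{(p_i-1)/2}p_i$; by Gauss genus theory $\Ker\Psi=\CA^2$ (dropping the $-4$ genus character is harmless, since the product relation on the genus characters shows the $k$ odd ones already cut out $\CA^2$). Two computations then finish the argument. First, the generators $(p_j,\sqrt{-n})$ of $\CA[2]$ satisfy $\Psi\bigl((p_j,\sqrt{-n})\bigr)=\bigl([p_j,-n]_{p_i}\bigr)_i$, which is exactly the $j$-th column of $\bfR_n$, so $\Psi(\CA[2])=\Im\bfR_n$. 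Second, evaluating the genus characters on the ideal $\mathfrak g$ of norm $\gamma$ and applying quadratic reciprocity (using $p_i^*\equiv1\bmod4$, so that $\leg{p_i^*}{\gamma}=\leg{\gamma}{p_i}$) gives $\Psi(\mathfrak g)=\bigl(\aleg{\gamma}{p_1},\dots,\aleg{\gamma}{p_k}\bigr)^\rmT=\bfb_\gamma$. Because $\Ker\Psi=\CA^2$, the condition $[\mathfrak g]\in\CA^2\cdot\CA[2]$ is equivalent to $\Psi(\mathfrak g)\in\Psi(\CA[2])$, that is $\bfb_\gamma\in\Im\bfR_n$, as claimed.

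The main obstacle is the bookkeeping in the two explicit computations rather than any conceptual difficulty. Establishing $(\xi)=\fa\cdot\mathfrak g^2$ cleanly requires controlling the ramified prime above $2$ (when $r=1$) and the ramified primes above the factors of $d$ and $n/d$, and arranging that $\mathfrak g$ may be taken coprime to the discriminant with $\bfN_{K/\BQ}\mathfrak g=\gamma$; here the primitivity of $(\alpha,\beta,\gamma)$ and the squarefreeness of $n$ are what keep the common factors under control, and one should verify $\gcd(\gamma,2n)=1$ so that each $\aleg{\gamma}{p_i}$ is defined. The identification $\Psi(\mathfrak g)=\bfb_\gamma$ is the other delicate point: one must match the Hilbert-symbol entries $[p_j,-n]_{p_i}$ of $\bfR_n$ with the Legendre symbols $\leg{\gamma}{p_i}$ through reciprocity, keeping careful track of the contribution of the prime $2$, which is governed by the omitted $-4$ genus character.
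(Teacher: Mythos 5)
The paper does not prove this proposition itself --- it imports it verbatim from \cite[Proposition~3.6]{Wang2016} --- and your argument is precisely the classical R\'edei--Reichardt mechanism that the cited source uses: extract a square root $[\mathfrak g]^{-1}$ of $[(2^rd,\sqrt{-n})]$ from the principal ideal $(d\alpha+\beta\sqrt{-n})$ of norm $2^rd\gamma^2$, reduce membership in $\CA^4$ to $[\mathfrak g]\in\CA^2\cdot\CA[2]$, and test this with genus characters, whose kernel is $\CA^2$ and whose values on $\CA[2]$ and on $\mathfrak g$ are the columns of $\bfR_n$ and the vector $\bfb_\gamma$ respectively. Your outline is correct, and the points you flag (coprimality of $\gamma$ with $n$ via primitivity, the ramified primes in the factorization of $(\xi)$, and the $2$-adic bookkeeping in the reciprocity step) are exactly the details that need to be, and can be, checked.
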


\subsection{Torsion subgroup}
\label{Torsion subgroup}

\begin{proposition}\label{pro:torsion-subgroup}
For any positive square-free integer $n$, $E^{(n)}_\tor(\BQ)\cong(\BZ/2\BZ)^2$.
\end{proposition}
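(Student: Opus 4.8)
The plan is to trap $E^{(n)}_\tor(\BQ)$ between an explicit lower bound and an upper bound coming from Mazur's theorem. For the lower bound, note that the three roots $0,\ a^2n,\ -b^2n$ of the defining cubic are rational and pairwise distinct (their differences are $\pm a^2n$, $\pm b^2n$ and $\pm(a^2+b^2)n=\pm2c^2n$, all nonzero since $a,b,n\ge1$), so $E^{(n)}$ is smooth with full rational $2$-torsion and $(\BZ/2\BZ)^2\subseteq E^{(n)}_\tor(\BQ)$. By Mazur's classification of torsion over $\BQ$, a group containing $(\BZ/2\BZ)^2$ is one of $(\BZ/2\BZ)^2$, $\BZ/2\BZ\times\BZ/4\BZ$, $\BZ/2\BZ\times\BZ/6\BZ$, $\BZ/2\BZ\times\BZ/8\BZ$; so it suffices to exclude a rational point of order $4$ and one of order $3$.

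To exclude order $4$ (hence also order $8$, which contains an order-$4$ point), I use that the double of an order-$4$ point is a nonzero $2$-torsion point lying in $2E^{(n)}(\BQ)$, so it is enough to show no $2$-torsion point is twice a rational point. By the classical criterion, for $y^2=(x-e_1)(x-e_2)(x-e_3)$ the point $(e_i,0)$ lies in $2E^{(n)}(\BQ)$ iff both $e_i-e_j$ and $e_i-e_k$ are squares in $\BQ^\times$. For $e_1=0$ one difference is $-a^2n<0$ and for $e_3=-b^2n$ one is $-b^2n<0$, neither a square; for $e_2=a^2n$ the differences are $a^2n$ and $2c^2n$, and since $n$ is square-free $a^2n$ is a square only when $n=1$, in which case $2c^2n=2c^2$ is not a square. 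Thus no $2$-torsion point is divisible by $2$, and both $\BZ/2\BZ\times\BZ/4\BZ$ and $\BZ/2\BZ\times\BZ/8\BZ$ are ruled out.

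It remains to exclude $\BZ/2\BZ\times\BZ/6\BZ$, i.e. a rational point of order $3$; this is the main obstacle. The tool is the injection $E^{(n)}_\tor(\BQ)\hookrightarrow\widetilde{E}^{(n)}(\BF_p)$ for every odd prime $p\nmid2abcn$ of good reduction, where full $2$-torsion forces $4\mid\#\widetilde{E}^{(n)}(\BF_p)$. The Hasse bound then pins the order: at $p=3$ the only multiple of $4$ in range is $4$, and at $p=5$ it is $4$ or $8$, each a power of $2$; so if $3\nmid abcn$ (reduce at $3$) or $5\nmid abcn$ (reduce at $5$), then $\#E^{(n)}_\tor(\BQ)$ is a power of $2$ and we are done. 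The delicate case is $15\mid abcn$, where I would exploit that $E^{(n)}$ is the quadratic twist of $E$ by $n$, so $a_p(E^{(n)})=\leg np\,a_p(E)$ and hence $\#\widetilde{E}^{(n)}(\BF_p)\equiv-\leg np\,a_p(E)\bmod3$ for any good prime $p\equiv2\bmod3$; choosing such a $p$ with $3\nmid a_p(E)$ yields $3\nmid\#\widetilde{E}^{(n)}(\BF_p)$ simultaneously for all $n$ coprime to $p$, excluding order-$3$ points. The existence of this auxiliary prime reduces by Chebotarev to the image of the mod-$3$ representation of $E$ containing an element of determinant $-1$ and nonzero trace, and I expect checking this (equivalently, that $E$ acquires no rational $3$-torsion under these twists) to be the real work; the $2$-power bookkeeping is routine.
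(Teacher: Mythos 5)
Your exclusion of points of order $4$ (and hence $8$) is correct and is in substance the paper's own argument: the criterion you invoke for $(e_i,0)\in 2E^{(n)}(\BQ)$ is exactly Ono's criterion (Lemma~2.5(1)), and your three sign/square-freeness checks on the pairs $(-a^2n,b^2n)$, $(a^2n,2c^2n)$, $(-b^2n,-2c^2n)$ match the paper's. The genuine gap is the $3$-torsion. Your reduction argument at $p=3$ or $p=5$ settles the claim only when $3\nmid abcn$ or $5\nmid abcn$; the remaining case $15\mid abcn$ occurs for infinitely many triples $(a,b,n)$, and that is precisely where your proof stops. You reduce this case to the assertion that the mod-$3$ image of $E$ contains an element of determinant $-1$ and nonzero trace, but as you yourself observe, that assertion is essentially equivalent to the statement that no quadratic twist of $E$ acquires a rational $3$-torsion point --- which is the claim to be proven. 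Since this must be verified uniformly for every primitive $(a,b,c)$ with $a^2+b^2=2c^2$ and is nowhere checked, the argument is incomplete. (There is also a minor bookkeeping point: a single auxiliary prime $p$ does not serve all $n$, since the twisting identity degenerates when $p\mid n$; one needs infinitely many such $p$, which Chebotarev would supply --- but only once the unverified hypothesis on the mod-$3$ image is established.)

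The paper avoids all of this with an elementary descent via Ono's criterion for $3$-torsion: the equations $d^2u^3(u+2v)=-a^2n$, $d^2v^3(v+2u)=b^2n$ with $\gcd(u,v)=1$ force $d^2=1$ and $n\in\{1,3\}$, then coprimality gives $v=\alpha^2$, $u=-\beta^2$ with $\alpha,\beta$ odd, so that $(\alpha^2-2\beta^2)/n$ and $(2\alpha^2-\beta^2)/n$ would both have to be perfect squares, which is impossible modulo $8$. If you want to keep your reduction-mod-$p$ strategy, you would have to carry out the mod-$3$ image computation for the entire family (e.g.\ via the $3$-division polynomial of $y^2=x(x-a^2)(x+b^2)$), which is at least as much work as the congruence argument above.
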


\begin{lemma}[{\cite{Ono1996}}]\label{lem:ono}
Let $\CE: y^2=x(x-a)(x+b)$ be an elliptic curve with $a,b\in\BZ$.
\begin{enumerate}
\item $\CE(\BQ)$ has a point of order $4$ if and only if one of the three pairs $(-a,b), (a,a+b)$ and $(-b,-a-b)$ consists of squares of integers.
\item $\CE(\BQ)$ has a point of order $3$ if and only if there exist integers $d,u,v$ such that $\gcd(u,v)=1$, $d^2u^3(u+2v)=-a, d^2v^3(v+2u)=b$ and $u/v\not\in\set{-2,-1/2,-1,1,0}$.
\end{enumerate}
\end{lemma}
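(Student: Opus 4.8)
The plan is to exhibit the full $2$-torsion and then invoke Lemma~\ref{lem:ono} to exclude everything larger. The three roots $0,\,a^2n,\,-b^2n$ of the cubic give three rational points of order $2$, so $(\BZ/2\BZ)^2\subseteq E^{(n)}_\tor(\BQ)$. By Mazur's classification of torsion over $\BQ$, a torsion subgroup containing $(\BZ/2\BZ)^2$ must be $\BZ/2\BZ\times\BZ/2m\BZ$ with $m\in\{1,2,3,4\}$. Hence it suffices to show that $E^{(n)}(\BQ)$ has no point of order $4$ (excluding $m=2,4$) and no point of order $3$ (excluding $m=3$). I would apply Lemma~\ref{lem:ono} to $\CE=E^{(n)}$, whose defining parameters are $a^2n$ and $b^2n$.

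For order $4$, I would examine the three pairs in Lemma~\ref{lem:ono}(1): namely $(-a^2n,\,b^2n)$, $(a^2n,\,(a^2+b^2)n)$ and $(-b^2n,\,-(a^2+b^2)n)$. Since $n>0$, the first and third pairs each contain a negative entry and so cannot consist of two squares. In the middle pair, $a^2n$ is a perfect square only if $n$ is, and as $n$ is square-free this forces $n=1$; but then the second entry $(a^2+b^2)n=2c^2$ is twice a nonzero square, hence not a perfect square. So no pair consists of two squares, and there is no point of order $4$. This step is clean and is where the hypotheses $n$ square-free and $a^2+b^2=2c^2$ enter.

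For order $3$, suppose Lemma~\ref{lem:ono}(2) produced integers $d,u,v$ with $\gcd(u,v)=1$ and $d^2u^3(u+2v)=-a^2n$, $d^2v^3(v+2u)=b^2n$. Dividing the two relations eliminates $d$ and $n$ and leaves the homogeneous quartic $b^2u^4+2b^2u^3v+2a^2uv^3+a^2v^4=0$. All terms are positive when $u,v>0$, so after normalizing $v>0$ one must have $u<0$; writing $u=-w$ with $w>0$ turns this into $b^2w^4+a^2v^4=2wv(b^2w^2+a^2v^2)$. Reducing this identity modulo $w^2$ and modulo $v^2$ and using $\gcd(w,v)=1$ would give $w\mid a$ and $v\mid b$; substituting $a=wa_1$, $b=vb_1$ and cancelling $w^2v^2$ collapses everything to $b_1^2w^2+a_1^2v^2=2wv(a_1^2+b_1^2)$.

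The hard part is this final step, i.e. organizing the order-$3$ analysis so that it reduces to a single Diophantine non-existence statement. Viewing the last relation as a quadratic in the integer $w$ with leading coefficient $b_1^2$, its discriminant $4v^2\bigl((a_1^2+b_1^2)^2-a_1^2b_1^2\bigr)$ must be a perfect square, so $a_1^4+a_1^2b_1^2+b_1^4$ is a perfect square with $a_1,b_1\ge1$. This contradicts the classical fact, provable by Fermat-style infinite descent, that $x^4+x^2y^2+y^4=z^2$ has no solution in positive integers. This contradiction rules out points of order $3$ and completes the argument; I note that, unlike the order-$4$ case, the order-$3$ reduction uses neither $\gcd(a,b)=1$ nor $a^2+b^2=2c^2$.
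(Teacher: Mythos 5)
Your proposal is really a proof of Proposition~\ref{pro:torsion-subgroup}: neither you nor the paper proves Lemma~\ref{lem:ono} itself (the paper cites it to Ono), so the right comparison is with the paper's proof of that proposition, which is exactly what you establish. Your skeleton (exhibit $(\BZ/2\BZ)^2$, invoke Mazur, exclude orders $4$ and $3$ via the lemma) and your order-$4$ exclusion agree in substance with the paper's, with you spelling out the sign and square-free considerations the paper leaves implicit. Your order-$3$ exclusion is correct but genuinely different. The paper keeps $n$ in the analysis: from $\gcd(u,v)=1$ and $n$ square-free it deduces $d^2=1$ and $n\in\{1,3\}$, uses coprimality of the factors to write $v=\alpha^2$, $u=-\beta^2$, and gets a contradiction modulo $8$ from the fact that $a,b$ (hence $\alpha,\beta$) are odd. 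You instead divide the two relations to eliminate $d$ and $n$ entirely, and your subsequent steps all check out: reducing $b^2w^4+a^2v^4=2wv(b^2w^2+a^2v^2)$ modulo $w^2$ gives $a^2v^3(v-2w)\equiv0\bmod w^2$, hence $w^2\mid a^2$ and $w\mid a$ (similarly $v\mid b$), and the quadratic $b_1^2w^2-2v(a_1^2+b_1^2)w+a_1^2v^2=0$ has integer root $w$ only if its discriminant $4v^2\bigl(a_1^4+a_1^2b_1^2+b_1^4\bigr)$ is a perfect square, contradicting the classical descent theorem that $x^4+x^2y^2+y^4=z^2$ has no positive integer solutions (that fact is indeed true and classical, e.g.\ it is proved in Sierpi\'nski's \emph{Elementary Theory of Numbers}). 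The trade-off: the paper's argument is self-contained, ending in a two-line congruence, but it genuinely uses $n$ square-free and $a,b$ odd; yours is more robust — insensitive to $n$ and to the parity or coprimality of $a,b$, needing only $ab\neq0$, $n>0$ and $\gcd(u,v)=1$ — at the cost of importing a nontrivial Fermat-descent result, so it is only as elementary as that imported theorem.
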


\begin{proof}[Proof of Proposition~\ref{pro:torsion-subgroup}]
Since $E^{(n)}$ has full rational $2$-torsion, $E^{(n)}_\tor(\BQ)$ contains a subgroup isomorphic to $(\BZ/2\BZ)^2$.
By Mazur's classification theorem \cite{Mazur1977,Mazur1978}, one have
\[E^{(n)}_\tor(\BQ)\cong\BZ/2\BZ\oplus \BZ/2N\BZ\]
for some $N\in\set{1,2,3,4}$.
We only need to show that $E^{(n)}(\BQ)$ contains no point of order $4$ or $3$.

Since the three pairs in Lemma~\ref{lem:ono}(1) are $(-a^2n, b^2n), (a^2n, 2c^2n)$ and $(-b^2n, -2c^2n)$, $E^{(n)}(\BQ)$ contains no point of order $4$.

Assume that there are integers $d,u,v$ such that $\gcd(u,v)=1$,
\[d^2u^3(u+2v)=-a^2n \quad\text{and}\quad d^2v^3(v+2u)=b^2n.\]
Clearly, $d^2=1$ and $n=\gcd(u+2v,v+2u)=\gcd(3,u-v)=1$ or $3$.
Since $a$ and $b$ are odd, so is $u,v$.
We may assume that $v>0$, then $u<0$.
Since $n\mid (u+2v,v+2u)$, we may write $v=\alpha^2, u=-\beta^2$.
Then $(\alpha^2-2\beta^2)/n$ and $(2\alpha^2-\beta^2)/n$ are squares, which is impossible by modulo $8$.
Hence $E^{(n)}(\BQ)$ contains no point of order $3$ by Lemma~\ref{lem:ono}(2).
\end{proof}

\subsection{Cassels pairing}

As shown in \cite{Cassels1998}, the $2$-Selmer group $\Sel_2\bigl(E^{(n)}\bigr)$ can be identified with
\[\set{\Lambda=(d_1,d_2,d_3)\in \bigl(\BQ^\times/\BQ^{\times2}\bigr)^3: D_\Lambda(\BA_\BQ)\neq\emptyset,d_1d_2d_3\equiv1\bmod\BQ^{\times2}},\]
where $D_\Lambda$ is a genus one curve defined by
\begin{equation}\label{eq:D-Lambda}
\begin{cases}
	H_1:& -b^2nt^2+d_2u_2^2-d_3u_3^2=0, \\
	H_2:& -a^2nt^2+d_3u_3^2-d_1u_1^2=0, \\
	H_3:& 2c^2nt^2+d_1u_1^2-d_2u_2^2=0.
\end{cases}
\end{equation}
Under this identification, the points $O,(a^2n,0),(-b^2n,0),(0,0)$ and non-torsion $(x,y)\in E^{(n)}(\BQ)$ correspond to
\begin{equation}\label{eq:torsion-homogeneous}
(1,1,1),\ (2,2n,n),\ (-2n,2,-n),\ (-n,n,-1)
\end{equation}
and $(x-a^2n,x+b^2n,x)$ respectively.

Cassels in \cite{Cassels1998} defined a skew-symmetric bilinear pairing $\pair{-,-}$ on the $\BF_2$-vector space
\begin{equation}\label{eq:pure2selmer}
\Sel_2'\bigl(E^{(n)}\bigr):=\Sel_2\bigl(E^{(n)}\bigr)/E^{(n)}(\BQ)[2].
\end{equation}
We will write it additively.
For any $\Lambda\in\Sel_2\bigl(E^{(n)}\bigr)$, choose $P=(P_v)\in D_\Lambda(\BA_\BQ)$.
Since $H_i$ is locally solvable everywhere, there exists $Q_i\in H_i(\BQ)$ by Hasse-Minkowski principle.
Let $L_i$ be a linear form in three variables such that $L_i=0$ defines the tangent plane of $H_i$ at $Q_i$.
Then for any $\Lambda'=(d_1',d_2',d_3')\in \Sel_2\bigl(E^{(n)}\bigr)$, define
\[\pair{\Lambda,\Lambda'}=\sum_v \pair{\Lambda,\Lambda'}_v\in\BF_2,\quad\text{where}\quad \pair{\Lambda,\Lambda'}_v=\sum_{i=1}^3 \bigl[L_i(P_v), d_i'\bigr]_v.\]
This pairing is independent of the choice of $P$ and $Q_i$, and is trivial on $E^{(n)}(\BQ)[2]$.

\begin{lemma}[{\cite[Lemma 7.2]{Cassels1998}}]\label{lem:cassels}
The local Cassels pairing $\pair{\Lambda,\Lambda'}_p=0$ if
\begin{itemize}
\item $p\nmid 2\infty$,
\item the coefficients of $H_i$ and $L_i$ are all integral at $p$ for $i=1,2,3$, and
\item modulo $D_\Lambda$ and $L_i$ by $p$, they define a curve of genus $1$ over $\BF_p$ together with tangents to it.
\end{itemize}
\end{lemma}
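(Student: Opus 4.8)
The plan is to reduce the vanishing of $\pair{\Lambda,\Lambda'}_p$ to an \emph{evenness} statement about $p$-adic valuations. Since $p$ is an odd prime at which $D_\Lambda$ attains good reduction, the hypotheses force $p\nmid abcn$ and $p\nmid d_i$; moreover, as $\Lambda'$ lies in the Selmer group, its components are unramified at $p$, so after adjusting by squares each $d_i'$ is a $p$-adic unit. For an odd prime and a unit second argument the additive Hilbert symbol is explicit, namely
\[\bigl[L_i(P_p),d_i'\bigr]_p=v_p\bigl(L_i(P_p)\bigr)\cdot\aleg{d_i'}{p},\]
where the symbol on the right is the additive Legendre symbol of $d_i'$ modulo $p$. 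Hence $\pair{\Lambda,\Lambda'}_p=\sum_{i=1}^3 v_p\bigl(L_i(P_p)\bigr)\aleg{d_i'}{p}$, and it suffices to prove that each valuation $v_p\bigl(L_i(P_p)\bigr)$ is even. (Here I first scale $P_p$ to primitive integral coordinates; this changes the individual terms only by $[\lambda,d_i']_p$, and these sum to $[\lambda,d_1'd_2'd_3']_p=0$ because $d_1'd_2'd_3'\in\BQ^{\times2}$, so the local pairing is unaffected.)

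The geometric heart is that $L_i$, restricted to $D_\Lambda$, has only even-order zeros. Each quadric $H_i$ omits the variable $u_i$, so forgetting $u_i$ gives a degree-two projection $\pi_i:D_\Lambda\to H_i$ onto the conic $H_i$, the second sheet being recorded by the sign of $u_i$ (determined by one of the remaining quadrics). The linear form $L_i$ cuts out on $H_i\cong\BP^1$ the tangent divisor $2Q_i$; pulling back along $\pi_i$, the zero divisor of $L_i|_{D_\Lambda}$ equals $2E_i$ for an effective divisor $E_i$ of degree two, so all multiplicities are even. The explicit hypothesis that the reduction is a genus-one curve $\tilde D$ \emph{together with genuine tangents} guarantees that this picture descends modulo $p$: $\tilde D$ is smooth, $\pi_i$ reduces to a degree-two map, and the zero divisor of $\tilde L_i$ on $\tilde D$ still has even multiplicities.

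It remains to convert this geometric evenness into arithmetic evenness of $v_p\bigl(L_i(P_p)\bigr)$. Reduce $P_p$ to $\tilde P\in\tilde D(\BF_p)$. If $\tilde L_i(\tilde P)\neq0$ then $v_p\bigl(L_i(P_p)\bigr)=0$. Otherwise, dehomogenize by a coordinate $x_0$ that is a unit at $P_p$ (one exists since $P_p$ is primitive), so that $v_p\bigl(L_i(P_p)\bigr)=v_p\bigl((L_i/x_0)(P_p)\bigr)$ and the reduction of $L_i/x_0$ has an even-order zero, say of order $2m$, at $\tilde P$. Working in the complete local ring $\BZ_p[[z]]$ at $\tilde P$ with parameter $z$, the even order yields a factorization $L_i/x_0=z^{2m}g$ with $g\in\BZ_p[[z]]^\times$; since $z(P_p)\in p\BZ_p$ and $g(P_p)\in\BZ_p^\times$, we obtain $v_p\bigl(L_i(P_p)\bigr)=2m\,v_p\bigl(z(P_p)\bigr)$, which is even. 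Every term of the sum therefore vanishes, giving $\pair{\Lambda,\Lambda'}_p=0$.

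The step I expect to be the main obstacle is this last passage from the even-order zeros on the generic fiber to an even valuation at the genuine local point $P_p$: one must verify that good reduction really does let the double-cover structure and the tangency descend to $\tilde D$ without degenerating (no collapse of the two sheets of $\pi_i$ at $\tilde P$, no extra tangency produced by reduction), and that the factorization $L_i/x_0=z^{2m}g$ holds \emph{integrally} over $\BZ_p$ rather than merely over $\BQ_p$. The Hilbert-symbol bookkeeping and the use of $d_1'd_2'd_3'\in\BQ^{\times2}$ are routine by comparison.
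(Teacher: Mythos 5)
The paper does not actually prove this lemma; it is quoted verbatim from Cassels [Lemma 7.2 of the cited paper], so your proposal can only be measured against Cassels' own argument, which it reconstructs faithfully in outline. The reduction $\bigl[L_i(P_p),d_i'\bigr]_p=v_p\bigl(L_i(P_p)\bigr)\aleg{d_i'}{p}$ for odd $p$ with $d_i'$ a $p$-adic unit, the disposal of the scaling ambiguity via $d_1'd_2'd_3'\in\BQ^{\times2}$, and the identification of the zero divisor of $L_i$ on $D_\Lambda$ as $2\pi_i^*Q_i$ are all correct and form exactly the right skeleton.

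The one step that does not follow as written is the sentence ``the even order yields a factorization $L_i/x_0=z^{2m}g$ with $g\in\BZ_p[[z]]^\times$.'' Even vanishing order of the \emph{reduction} at $\tilde P$ does not imply such an integral factorization: $f=z^2-p\in\BZ_p[[z]]$ reduces to $\tilde z^2$, yet $f(t)=t^2-p$ has valuation exactly $1$ at every $t\in p\BZ_p$. What you actually need --- and what the doubled-divisor structure provides, while mere evenness of the reduced order does not --- is that the divisor of $L_i/x_0$ on the integral model is \emph{twice a Cartier divisor} near $\tilde P$, giving $L_i/x_0=h^2\cdot(\text{unit})$ with $h$ integral; then the value at $P_p$ has even valuation no matter what $h(P_p)$ is. The cleanest way to secure this is to descend to the conic rather than argue on $D_\Lambda$: since $v_p\bigl(L_i(P_p)\bigr)$ depends only on $\pi_i(P_p)\in H_i(\BQ_p)$, work on $H_i\cong\mathbb{P}^1_{\BZ_p}$, which is smooth with a section $\mathcal{Q}_i$ through $Q_i$. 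Choosing the dehomogenizing coordinate $x_0$ with $x_0(\tilde Q_i)\neq0$, the hypothesis that $\tilde L_i$ is still the tangent at $\tilde Q_i$ says the local divisor of $L_i/x_0$ at $\tilde Q_i$ is exactly $2\mathcal{Q}_i=2\,V(w)$ for a local parameter $w$, so $L_i/x_0=w^2\cdot(\text{unit})$ in $\BZ_p[[w]]$ and $v_p\bigl(L_i(P_p)\bigr)=2\,v_p\bigl(w(\pi_i(P_p))\bigr)$. This also sidesteps the worry you raise about the two sheets of $\pi_i$ colliding modulo $p$: on $D_\Lambda$ itself, $\pi_i^*Q_i$ may be a non-split horizontal divisor of degree two, and one would have to argue separately that its square still evaluates to even valuation, whereas on the conic the issue never arises.
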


\begin{lemma}\label{lem:non-deg}
The following are equivalent:
\begin{enumerate}
\item $\rank_\BZ E^{(n)}(\BQ)=0$ and $\Sha(E^{(n)}/\BQ)[2^\infty]\cong (\BZ/2\BZ)^{2t}$;
\item $\Sel_2'\bigl(E^{(n)}\bigr)\cong (\BZ/2\BZ)^{2t}$ and the Cassels pairing on $\Sel_2'\bigl(E^{(n)}\bigr)$ is non-degenerate.
\end{enumerate}
\end{lemma}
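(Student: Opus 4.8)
The plan is to express both conditions in terms of the Cassels--Tate pairing on $\Sha(E^{(n)}/\BQ)$ and then reduce everything to linear algebra over $\BF_2$. First I would record the descent exact sequence
\[0\lra E^{(n)}(\BQ)/2E^{(n)}(\BQ)\lra\Sel_2\bigl(E^{(n)}\bigr)\lra\Sha\bigl(E^{(n)}/\BQ\bigr)[2]\lra0.\]
Proposition~\ref{pro:torsion-subgroup} gives $E^{(n)}(\BQ)\cong\BZ^r\oplus(\BZ/2\BZ)^2$ with $r=\rank_\BZ E^{(n)}(\BQ)$ and no point of order $4$, so the full $2$-torsion injects into $E^{(n)}(\BQ)/2E^{(n)}(\BQ)\cong(\BZ/2\BZ)^{r+2}$. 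Dividing the sequence by $E^{(n)}(\BQ)[2]$ produces
\[0\lra V\lra\Sel_2'\bigl(E^{(n)}\bigr)\lra\Sha\bigl(E^{(n)}/\BQ\bigr)[2]\lra0,\]
where $V$ is the image of the free part of the Mordell--Weil group, with $\dim_{\BF_2}V=r$; in particular $\dim_{\BF_2}\Sel_2'=r+\dim_{\BF_2}\Sha[2]$.

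Next I would quote the defining property of the pairing from \cite{Cassels1998}: the Cassels pairing on $\Sel_2'$ is alternating, it vanishes against every class coming from a rational point (so the whole subspace $V$ lies in its radical), and the bilinear form it induces on the quotient $\Sel_2'/V\cong\Sha[2]$ is the restriction to $\Sha[2]$ of the Cassels--Tate pairing. Writing $\pi\colon\Sel_2'\to\Sha[2]$ for the projection, it then follows formally that the radical of the Cassels pairing equals $\pi^{-1}$ of the radical of the induced form on $\Sha[2]$. Hence the Cassels pairing is non-degenerate if and only if $V=0$, equivalently $r=0$, and the induced form on $\Sha[2]$ is non-degenerate.

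It remains to match the induced form on $\Sha[2]$ with the structure of $\Sha[2^\infty]$. On one hand, bilinearity of the Cassels--Tate pairing shows $\Sha[2]\cap2\Sha$ always lies in the radical of the induced form: if $a=2b$ with $a\in\Sha[2]$ then $b\in\Sha[4]$, and $\pair{a,c}=2\pair{b,c}=0$ for every $c\in\Sha[2]$ since $\pair{b,c}$ is $2$-torsion in $\BQ/\BZ$. Thus non-degeneracy forces $\Sha[2]\cap2\Sha=0$, and an induction on $m$ gives $\Sha[2^m]=\Sha[2]$ for all $m$, so that $\Sha[2^\infty]=\Sha[2]$ is a finite elementary abelian $2$-group $(\BZ/2\BZ)^{2s}$. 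Conversely, when $\Sha[2^\infty]\cong(\BZ/2\BZ)^{2s}$ is finite, the Cassels--Tate pairing on this finite group is non-degenerate, hence so is the induced form on $\Sha[2]=\Sha[2^\infty]$.

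Finally I would assemble the equivalence. If (1) holds, then $r=0$ and $\Sha[2^\infty]\cong(\BZ/2\BZ)^{2t}$ is finite, so $V=0$, the induced form on $\Sha[2]$ is non-degenerate, and $\dim_{\BF_2}\Sel_2'=2t$; this is (2). If (2) holds, non-degeneracy forces $r=0$ and $\Sha[2^\infty]\cong(\BZ/2\BZ)^{2s}$, and then $2t=\dim_{\BF_2}\Sel_2'=\dim_{\BF_2}\Sha[2]=2s$ identifies $\Sha[2^\infty]\cong(\BZ/2\BZ)^{2t}$; this is (1). The one substantive input is the identification of the Cassels pairing with the Cassels--Tate pairing on $\Sha[2]$, with the Mordell--Weil classes lying in the radical, from \cite{Cassels1998}; I expect verifying this compatibility, rather than the surrounding $\BF_2$-linear algebra, to be the main point, the finiteness of $\Sha[2^\infty]$ being supplied in one direction by the hypothesis and in the other by the non-degeneracy itself.
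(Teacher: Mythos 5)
Your proof is correct and follows essentially the same route as the paper, which simply notes $E^{(n)}(\BQ)[2]\cong(\BZ/2\BZ)^2$ and defers to \cite[p.~2157]{Wang2016}: the descent exact sequence modulo $E^{(n)}(\BQ)[2]$, the identification of the Cassels pairing with the Cassels--Tate pairing on $\Sha[2]$ with the Mordell--Weil classes in the radical, and the observation that non-degeneracy on $\Sha[2]$ is equivalent to $\Sha[2]\cap2\Sha=0$, hence to $\Sha[2^\infty]=\Sha[2]$. Your writeup is a faithful expansion of that standard argument, with the compatibility from \cite{Cassels1998} correctly flagged as the only substantive input.
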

\begin{proof}
Note that $E^{(n)}(\BQ)[2]=(\BZ/2\BZ)^2$ by Proposition~\ref{pro:torsion-subgroup}.
The proof is similar to \cite[p.~2157]{Wang2016}.
\end{proof}

By this lemma, the proof of our main result can be reduced to the calculations of the $2$-Selmer group and the Cassels pairing on it.

\section{$2$-descent method}
\label{2-descent method}

In this section, we will study the local solvability of homogeneous spaces and then express the $2$-Selmer group as the kernel space of a matrix defined over $\BF_2$.

\subsection{Homogeneous spaces}

\begin{lemma}\label{lem:local-solv}
Let $n$ be a positive square-free integer prime to $2abc$ and $\Lambda=(d_1,d_2,d_3)$, where $d_1,d_2,d_3$ are square-free integers.
\begin{enumerate}
\item If $p\nmid 2abcn$, then $D_\Lambda(\BQ_p)\neq\emptyset$ if and only if $p\nmid d_1d_2d_3$.
\item If $D_\Lambda(\BQ_2)\neq\emptyset$, then $d_1$ and $d_2$ have the same parity.
\item If both of $d_1$ and $d_2$ are odd, then $D_\Lambda(\BQ_2)\neq\emptyset$ if and only if either $4\mid d_1-1, 8\mid d_1-d_2$ or $4\mid d_1+n, 8\mid d_1-d_2+2n$.
\item $D_\Lambda(\BR)\neq\emptyset$ if and only if $d_2>0$.
\end{enumerate}
\end{lemma}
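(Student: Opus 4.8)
The plan is to analyze the local solvability of the homogeneous space $D_\Lambda$ defined by the three conics $H_1,H_2,H_3$ in~\eqref{eq:D-Lambda}, place by place. I would organize the proof around the observation that $D_\Lambda(\BQ_v)\neq\emptyset$ is equivalent to the simultaneous solvability of the three conics over $\BQ_v$, and that such solvability is governed by Hilbert symbols. The key algebraic fact is that each $H_i$ is a ternary quadratic form, so by the Hasse–Minkowski theory a single conic $H_i$ is solvable over $\BQ_v$ if and only if the product of the relevant Hilbert symbols vanishes. Note that only two of the three equations are independent, since $H_1+H_2+H_3$ gives the relation $2c^2n=a^2n+b^2n$, i.e. $a^2+b^2=2c^2$, which holds by hypothesis; thus I would reduce throughout to analyzing $H_1$ and $H_2$, or any convenient two of the three.

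\textbf{Part (1).} For $p\nmid 2abcn$ I would argue as follows. If $p\nmid d_1d_2d_3$, then all coefficients of the $H_i$ are $p$-adic units (using $p\nmid 2abcn$), and each conic reduces to a smooth conic over $\BF_p$; since a smooth conic over a finite field always has an $\BF_p$-point, Hensel's lemma lifts a suitable nonsingular point to $\BQ_p$, giving $D_\Lambda(\BQ_p)\neq\emptyset$. Conversely, if $p\mid d_1d_2d_3$, say $p\mid d_1$ (the other cases being symmetric), I would examine $H_2$ and $H_3$ modulo $p$: reducing and tracking $p$-adic valuations shows that the surviving form becomes $d_3u_3^2\equiv d_1u_1^2+a^2nt^2$ type relations whose solvability forces a contradiction with the condition $d_1d_2d_3\equiv1\bmod\BQ^{\times2}$ and $p\nmid 2abcn$; concretely, the Hilbert symbol computation $(d_2,d_3)_p$ or the analogous one must be nontrivial. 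The main point is that exactly one of $d_1,d_2,d_3$ can carry $p$ (up to squares), forcing an obstruction.

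\textbf{Parts (2)–(4): the place $p=2$.} This is where I expect the main obstacle, since the $2$-adic analysis is the most delicate. For part~(2), I would reduce the system modulo small powers of $2$ and show that a solution forces $d_1\equiv d_2\bmod 2$: looking at $H_3: 2c^2nt^2+d_1u_1^2-d_2u_2^2=0$ modulo $2$ (here $2\nmid abcn$ but $2\mid 2c^2n$), the equation becomes $d_1u_1^2\equiv d_2u_2^2\bmod 2$, and a parity analysis of which variables can be $2$-adic units yields that $d_1$ and $d_2$ must share parity. For parts~(3) and~(4), with $d_1,d_2$ both odd I would carry out an explicit $2$-adic solvability computation. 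The standard approach is to use that a ternary form $\alpha X^2+\beta Y^2+\gamma Z^2$ is isotropic over $\BQ_2$ iff a Hilbert-symbol condition $(-\alpha\beta,-\alpha\gamma)_2=1$ holds; translating the conditions for $H_1,H_2,H_3$ into congruences modulo $8$ on $d_1,d_2,n$ gives the two stated alternatives ($4\mid d_1-1,8\mid d_1-d_2$ versus $4\mid d_1+n,8\mid d_1-d_2+2n$). The real labor is bookkeeping the interaction of the three conics: since only two are independent, I would pick the two that make the $2$-adic congruence cleanest (likely $H_1$ and $H_3$) and verify that their joint solvability over $\BQ_2$ is equivalent to the disjunction stated. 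I expect the two cases to correspond to whether the common solution has $t$ a $2$-adic unit or not, which is the source of the dichotomy.

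\textbf{Part (4): the real place.} This is the easiest case. Over $\BR$, each $H_i$ is a real conic, and solvability is purely a sign condition. Adding appropriate multiples of the equations (again using $a^2+b^2=2c^2$), I would show that a real point exists if and only if the signature permits it, and a direct inspection of $H_1: -b^2nt^2+d_2u_2^2-d_3u_3^2=0$ together with the constraint $d_1d_2d_3>0$ (mod squares, so the product sign is fixed) forces that $D_\Lambda(\BR)\neq\emptyset$ exactly when $d_2>0$. The point is that $n>0$ and $a^2,b^2,c^2>0$, so the only free sign is controlled by $d_2$; I would confirm this by exhibiting explicit real solutions when $d_2>0$ and a sign obstruction when $d_2<0$.
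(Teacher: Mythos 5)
Your overall framing for parts (1), (2) and (4) is close to what the paper does: part (1) is exactly classical descent theory (the paper simply cites Silverman, Theorem~X.1.1 and Corollary~X.4.4), part (2) is a parity chase through the equations ($2\mid d_1$, $2\nmid d_2$ forces $2\mid u_2$ by $H_3$, then $2\mid t, u_3$ by $H_1$ and $2\mid u_1$ by $H_2$, contradicting primitivity), and part (4) is the same sign argument you sketch ($d_2<0$ forces $d_3<0$ by $H_1$, hence $d_1>0$ from $d_1d_2d_3\in\BQ^{\times2}$ but $d_1<0$ from $H_2$). One small factual slip in your part (1): since $d_1d_2d_3\equiv1\bmod\BQ^{\times2}$ and the $d_i$ are square-free, a prime $p$ dividing $d_1d_2d_3$ divides exactly \emph{two} of them, not one; the obstruction is then a straightforward valuation chase showing $p$ divides all of $t,u_1,u_2,u_3$, not a Hilbert-symbol computation.

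The genuine gap is in part (3). Your plan reduces solvability of $D_\Lambda$ over $\BQ_2$ to Hilbert-symbol (isotropy) conditions on the individual ternary conics $H_i$, but $D_\Lambda$ is the \emph{intersection} of these quadrics in $\mathbb{P}^3$, and joint solvability over a local field is not in general equivalent to each conic being separately isotropic: each $H_i(\BQ_2)\neq\emptyset$ is necessary but not sufficient for a common point. (It does happen to be sufficient in this specific situation, but that is a conclusion of the computation, not something one may assume; it is essentially what Lemma~\ref{lem:local-solv-2} later extracts from part (3), so assuming it here would be circular.) Consequently your argument only delivers the necessity of the two congruence alternatives. The paper proves necessity by reducing $H_2$ modulo $4$ and $H_3$ modulo $8$ at a hypothetical common primitive point (after observing that $u_1,u_2$ must be odd and exactly one of $t,u_3$ even, which is precisely your unit/non-unit dichotomy for $t$), and it proves sufficiency by exhibiting explicit common $\BQ_2$-points in each of four sub-cases, e.g.\ $t=0$, $u_i=\sqrt{1/d_i}$ when $8\mid d_1-1$, and $t=1$, $u_1=\sqrt{-a^2n/d_1}$, $u_2=\sqrt{b^2n/d_2}$, $u_3=0$ when $8\mid d_1+n$, with two further choices ($t=2$ resp.\ $u_3=2$) covering $8\mid d_1-5$ and $8\mid d_1+n+4$. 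Some such explicit construction (or an honest argument that the individual Hilbert conditions force a common point here) is needed to close the ``if'' direction; as written, your proposal does not contain it.
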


\begin{proof}
Certainly, $\gcd(d_1,d_2,d_3)=1$.
Since we are dealing with homogeneous equations, we may assume that $u_1, u_2, u_3$ and $t$ are $p$-adic integers and at least one of them is a $p$-adic unit.

(1) By classical descent theory, see \cite[Theorem~X.1.1, Corollary~X.4.4]{Silverman2009}.
%Let $p$ be a prime not dividing $2abcn$.
%If $p\mid d_1d_2d_3$, then $p$ divides exactly two of $d_1, d_2$ and $d_3$.
%This implies that $p$ divides $t$ and then all of $d_1, d_2$ and $d_3$, which is impossible.
%Hence $D_\Lambda(\BQ_p)$ is empty if $p\mid d_1d_2d_3$.
%
%If $p\nmid d_1d_2d_3$, then $D_\Lambda$ modulo $p$ gives rise to a smooth projective curve over $\BF_p$.
%By Weil conjecture (\cite{Weil1949}, \cite[p~141]{Silverman2009}), the zeta function of $D_\Lambda$ is
%\[Z(D_\Lambda,T)=\frac{(1-\alpha T)(1-\ov\alpha T)}{(1-T)(1-pT)}\]
%with $\alpha\ov\alpha=p$.
%This implies that
%\[\#D_\Lambda(\BF_p)=1+p-\alpha-\ov\alpha=|1-\alpha|^2>0.\]
%Hence $D_\Lambda(\BQ_p)$ is non-empty by Hensel's Lemma.

(2) Suppose that $D_\Lambda(\BQ_2)\neq\emptyset$.
If $2\mid d_1, 2\nmid d_2$, then $2\mid d_3$.
We have $2\mid u_2$ by $H_3$ and $2\mid t$ by $H_1$.
Then $2\mid u_3$ by $H_1$ and $2\mid u_1$ by $H_2$, which is impossible.
The case $2\nmid d_1, 2\mid d_2$ is similar.
Hence $d_1$ and $d_2$ have the same parity.

(3) If $D_\Lambda(\BQ_2)\neq\emptyset$, then both of $u_1,u_2$ are odd by $H_3$ and exactly one of $t$ and $u_3$ is even by $H_2$.
If $t$ is even and $u_3$ is odd, then $4\mid d_1-d_3, 8\mid d_1-d_2$ by $H_2\bmod 4$ and $H_3\bmod 8$.
Note that if $8\mid d_1-d_2$, then $d_3\equiv d_1d_2\equiv 1\bmod8$.
If $t$ is odd and $u_3$ is even, then $4\mid d_1+n, 8\mid d_1-d_2+2n$ by $H_2\bmod 4$ and $H_3\bmod 8$.

Conversely, if $4\mid d_1-1, 8\mid d_1-d_2$, then $d_3\equiv d_1d_2\equiv 1\bmod 8$. Take
\begin{itemize}
\item $t=0, u_1=\sqrt{1/d_1}, u_2=\sqrt{1/d_2}, u_3=\sqrt{1/d_3}$ if $8\mid d_1-1$;
\item $t=2, u_1=1, u_2=\sqrt{(d_1+8c^2n)/d_2}, u_3=\sqrt{(d_1+4a^2n)/d_3}$ if $8\mid d_1-5$.
\end{itemize}
If $4\mid d_1+n, 8\mid d_1-d_2+2n$, take
\begin{itemize}
\item $t=1, u_1=\sqrt{-a^2n/d_1}, u_2=\sqrt{b^2n/d_2}, u_3=0$ if $8\mid d_1+n$;
\item $t=1, u_1=\sqrt{(4d_3-a^2n)/d_1}, u_2=\sqrt{(4d_3+b^2n)/d_2}, u_3=2$ if $8\mid d_1+n+4$.
\end{itemize}

(4) Suppose that $D_\Lambda(\BR)\neq\emptyset$.
If $d_2<0$, then $d_3<0$ by $H_1$.
Thus $d_1>0$ by $d_1d_2d_3\in\BQ^{\times2}$ and $d_1<0$ by $H_2$, which is impossible.
Hence $d_2>0$.
Another direction is trivial.
\end{proof}

Assume that $n$ is a positive square-free integer prime to $2abc$.
By Lemma~\ref{lem:local-solv} and \eqref{eq:torsion-homogeneous}, any element of the pure $2$-Selmer group $\Sel_2'\bigl(E^{(n)}\bigr)$ has a unique representative $\Lambda=(d_1,d_2,d_3)$, where $d_1,d_2,d_3$ are positive square-free integers dividing $nabc$.
In the rest part of this article, $\Lambda$ is always assumed to be in this form and we will write $\Lambda=(d_1,d_2,d_3)\in\Sel_2'\bigl(E^{(n)}\bigr)$ for simplicity.

\begin{lemma}\label{lem:local-solv-n}
Let $n$ be a positive square-free integer prime to $2abc$ and $\Lambda=(d_1,d_2,d_3)$.
Let $p$ be a prime factor of $n$.
Then $D_\Lambda(\BQ_p)\neq\emptyset$ if and only if
\begin{itemize}
\item $\leg{d_1}p=\leg{d_2}p=1$, if $p\nmid d_1,p\nmid d_2$;
\item $\leg{2d_1}p=\leg{2n/d_2}p=1$, if $p\nmid d_1,p\mid d_2$;
\item $\leg{-2n/d_1}p=\leg{2d_2}p=1$, if $p\mid d_1,p\nmid d_2$;
\item $\leg{-n/d_1}p=\leg{n/d_2}p=1$, if $p\mid d_1,p\mid d_2$.
\end{itemize}
\end{lemma}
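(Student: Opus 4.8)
The plan is to run a direct $p$-adic analysis on the homogeneous space \eqref{eq:D-Lambda}, exploiting that for a prime factor $p$ of $n$ with $p\nmid 2abc$ and $n$ square-free, each coefficient $a^2n,\,b^2n,\,2c^2n$ has $p$-adic valuation exactly $1$, while each $d_i$ has $v_p(d_i)\in\{0,1\}$. Since $d_1d_2d_3\in\BQ^{\times2}$, an even number of the $d_i$ are divisible by $p$, which produces exactly the four cases in the statement. First I would scale any putative point to primitive $p$-adic integers $(t,u_1,u_2,u_3)$, at least one of which is a unit, and record the $p$-adic valuations of the three monomials occurring in each $H_i$.

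The engine of the necessity direction is the elementary fact that a sum of three $p$-adic numbers can vanish only if the minimal valuation is attained at least twice. In each $H_i$, exactly two of the three monomials have odd valuation (those carrying the factor $n$, or the lone factor of $p$ coming from some $d_j$) and one has even valuation; parity then forces the two variables appearing in the odd monomials to have equal valuation and the third variable to be strictly more divisible by $p$. Carrying this out for all three equations pins down the valuation pattern completely — for instance in the case $p\nmid d_1,\ p\mid d_2$ one finds $v_p(t)=v_p(u_2)=v_p(u_3)=0$ and $v_p(u_1)\ge1$ — so that precisely one of $u_1,u_2,u_3$ is forced to be $p$-divisible. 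Substituting this, dividing each equation by the appropriate power of $p$, and reducing modulo $p$ turns two of the $H_i$ into ternary conic congruences among units; reading off which quadratic residues must coincide yields two Legendre-symbol conditions. In the cases where these raw conditions do not already appear in the stated shape, I would use $\leg{d_1d_2d_3}p=1$ to multiply the two conditions together and collapse them into the clean form, the factor $2$ in $\leg{2d_1}p$ and $\leg{2n/d_2}p$ arising precisely because, after dividing $H_3$ by $p$, the coefficient $2c^2n'$ survives as a unit.

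For the converse I would reverse this construction by Hensel's lemma. Given the symbol conditions, set the forced $p$-divisible variable equal to $pw$ and solve the resulting system in the remaining unknowns: first solve the relevant ternary conic (which, having unit coefficients over $\BQ_p$ with $p$ odd, is automatically isotropic), imposing the mod-$p$ congruence guaranteed by the first symbol condition; then solve for $w$ by extracting a square root whose existence is exactly the content of the second symbol condition, lifted by Hensel. Because $H_1+H_2+H_3=0$ identically, satisfying two of the three equations already produces a genuine point of $D_\Lambda(\BQ_p)$.

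I expect the main obstacle to be the sufficiency direction: one must check that a solution of the single ternary conic can be chosen so that the auxiliary square root really exists, i.e. that the two symbol conditions are not merely necessary but jointly yield a point on the full intersection of two quadrics rather than on one conic alone. The careful bookkeeping of the factors of $2$ coming from $2c^2n$ in $H_3$ — which is exactly what distinguishes this full $2$-torsion curve from the congruent-number case of \cite{Wang2016} — is where sign and residue errors are easiest to make.
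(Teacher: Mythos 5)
Your valuation-and-reduction-mod-$p$ plan is essentially the paper's own approach: the paper carries out the case $p\nmid d_1d_2$ explicitly (necessity by reading $H_2$ and $H_3$ modulo $p$, sufficiency via the explicit point $\bigl(0,\sqrt{1/d_1},\sqrt{1/d_2},\sqrt{1/d_3}\bigr)$) and defers the remaining cases to the same analysis in Monsky's appendix to \cite{HeathBrown1994}, using $H_1+H_2+H_3=0$ and Hensel lifting just as you do. One small correction: the claim that ``exactly two of the three monomials have odd valuation in each $H_i$'' is not right in general --- when $p\nmid d_1d_2d_3$ only the $t^2$-term has odd valuation, and when $p$ divides both $d_j$'s occurring in some $H_i$ all three terms do --- but the minimum-attained-twice-plus-parity engine still determines the valuation pattern correctly in every case, as your worked instance $p\nmid d_1$, $p\mid d_2$ shows.
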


\begin{proof}
Assume that $p\nmid d_1d_2$, then $p\nmid d_3$.
If $D_\Lambda(\BQ_p)\neq\emptyset$, then $\leg{d_2d_3}p=\leg{d_1d_3}p=1$ by $H_2$ and $H_3$.
That's to say, $\leg{d_1}p=\leg{d_2}p=1$.
Conversely, if $\leg{d_1}p=\leg{d_2}p=1$, then $\bigl(0,\sqrt{1/d_1},\sqrt{1/d_2},\sqrt{1/d_3}\bigr)\in D_\Lambda(\BQ_p)$. The rest cases can be proved similarly as in the congruent elliptic curve case, see \cite[Appendix]{HeathBrown1994}.
\end{proof}

\begin{lemma}\label{lem:local-solv-abc}
Let $n$ be a positive square-free integer prime to $2abc$ and $\Lambda=(d_1,d_2,d_3)$.
Let $p$ be a prime factor of $abc$.
\begin{enumerate}
\item If $p\mid a$, then $D_\Lambda(\BQ_p)\neq\emptyset$ if and only if one of the following cases holds:
	\begin{itemize}
	\item $p\nmid d_2, p\nmid d_1, \leg{d_2}p=1$;
	\item $p\nmid d_2, p\mid d_1, \leg{d_2}p=\leg{n}p=1$.
	\end{itemize}
\item If $p\mid b$, then $D_\Lambda(\BQ_p)\neq\emptyset$ if and only if one of the following cases holds:
	\begin{itemize}
	\item $p\nmid d_1, p\nmid d_2, \leg{d_1}p=1$;
	\item $p\nmid d_1, p\mid d_2, \leg{d_1}p=\leg{-n}p=1$.
	\end{itemize}
\item If $p\mid c$, then $D_\Lambda(\BQ_p)\neq\emptyset$ if and only if one of the following cases holds:
	\begin{itemize}
	\item $p\nmid d_3, p\nmid d_1, \leg{d_3}p=1$;
	\item $p\nmid d_3, p\mid d_1, \leg{d_3}p=\leg{n}p=1$.
	\end{itemize}
\end{enumerate}
\end{lemma}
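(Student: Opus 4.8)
The plan is to mimic the $p$-adic analysis of Lemma~\ref{lem:local-solv-n}, exploiting the linear relation $H_1+H_2+H_3=0$, which holds identically because $a^2+b^2=2c^2$. Thus $D_\Lambda$ is cut out by any two of the three conics in \eqref{eq:D-Lambda}, and I will always discard the one whose $t$-coefficient carries the prime in question. First I record the reductions forced by the hypotheses: since $(a,b,c)$ is primitive with $abc$ odd, an odd prime $p\mid abc$ divides exactly one of $a,b,c$ and is coprime to $2n$; moreover $a^2+b^2=2c^2$ shows that $\leg2p=1$ whenever $p\mid ab$ and $\leg{-1}p=1$ whenever $p\mid c$, facts I will use freely. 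As usual I normalise so that $t,u_1,u_2,u_3\in\BZ_p$ with at least one a $p$-adic unit. Because $d_1d_2d_3$ is a square and $\gcd(d_1,d_2,d_3)=1$, the number of $d_i$ divisible by $p$ is either $0$ or $2$, which is what makes the case list finite.

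Consider $p\mid a$, the cases $p\mid b$ and $p\mid c$ being identical after permuting indices and signs. Here the $t$-coefficients of $H_1,H_3$ are prime to $p$, whereas the discarded conic $H_2$ carries the factor $a^2$ with $v_p(a^2)\ge2$. If $p\nmid d_1d_2d_3$, I eliminate $u_1,u_2$ via $H_1,H_3$ and read off from $H_2$ that $d_3u_3^2-d_1u_1^2=a^2nt^2\equiv0\bmod p$; combined with $\leg{d_1d_2d_3}p=1$ this forces $\leg{d_2}p=1$ as a necessary condition, and conversely, when $\leg{d_2}p=1$ the remaining requirements amount to two ternary forms with unit coefficients representing zero over $\BQ_p$, which holds automatically for odd $p$ by Hensel's lemma. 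If instead $p\mid d_1$ (hence $p\mid d_3$ and $p\nmid d_2$), reducing $H_1$ modulo $p$ gives $d_2u_2^2\equiv b^2nt^2$, whence $t$ is forced to be a unit and $\leg{d_2}p=\leg np$; then examining $H_2$ shows that $d_3'u_3^2-d_1'u_1^2$, with $d_1=pd_1'$ and $d_3=pd_3'$, has odd $p$-adic valuation, which is compatible only with $\leg{d_1'd_3'}p=\leg{d_2}p=1$, yielding both $\leg{d_2}p=1$ and $\leg np=1$. Finally, the subcases with $p\mid d_2$ are eliminated by a descent on valuations: the equations successively force $p\mid t$ and then $p\mid u_1,u_2,u_3$, contradicting primitivity of the solution.

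The cases $p\mid b$ and $p\mid c$ run along the same lines: for $p\mid b$ one keeps $H_2,H_3$ and discards $H_1$, and the sign of $n$ in the criterion becomes $-n$ because the surviving relation reads $d_1u_1^2\equiv-a^2nt^2$; for $p\mid c$ one keeps $H_1,H_2$ and discards $H_3$, recovering the criterion in $\leg{d_3}p$ and $\leg np$. The main obstacle is the book-keeping in the two-divisible subcase, and in particular confirming that the criterion is insensitive to $v_p(a)$ when $v_p(a)\ge2$: there the leading reduction no longer pins down $u_1,u_3$, so necessity of $\leg{d_2}p=1$ must be extracted from the parity of $v_p\!\left(d_3'u_3^2-d_1'u_1^2\right)$ rather than from a single congruence. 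This is exactly the kind of computation carried out for the congruent-number curve in \cite[Appendix]{HeathBrown1994}, and I expect to transcribe it with the sign and index changes dictated by \eqref{eq:D-Lambda}.
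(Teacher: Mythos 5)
Your strategy is essentially the paper's: rule out $p\mid d_2$ by a valuation descent, split according to whether $p$ divides $d_1d_3$, read necessary congruences off the reductions of the $H_i$, and produce points by Hensel lifting. The reorganisation via $H_1+H_2+H_3=0$ and the odd-valuation argument forcing $\leg{d_1'd_3'}{p}=1$ in the two-divisible case are good (the latter handles $v_p(a)\ge 2$ more cleanly than a bare mod-$p$ reduction). However, there is a genuine gap in the necessity step of the case $p\nmid d_1d_2d_3$ --- and it is the same gap that the paper's own proof has. From $H_2$ you get $d_3u_3^2-d_1u_1^2=a^2nt^2$ and conclude $\leg{d_1d_3}{p}=\leg{d_2}{p}=1$; this requires $u_1,u_3$ not both in $p\BZ_p$, and that cannot be excluded. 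If $p\mid u_1$ and $p\mid u_3$, then $H_1$ (or $H_3$) only forces $\leg{nd_2}{p}=1$, and such solutions genuinely occur when $v_p(a)=1$: for $(a,b,c)=(7,23,17)$, $n=3$, $\Lambda=(3,3,1)$ one has $\leg{d_2}{7}=\leg{3}{7}=-1$, yet $(t,u_1,u_2,u_3)=(1,14,\sqrt{774},7\sqrt{15})\in D_\Lambda(\BQ_7)$ since $774\equiv 4$ and $15\equiv 1\bmod 7$. So the stated equivalence fails without the standing hypothesis that every prime factor of $n$ is a quadratic residue modulo every prime factor of $abc$ (under which $\leg{nd_2}{p}=1$ collapses to $\leg{d_2}{p}=1$, and under which the lemma is actually applied in Proposition~\ref{pro:Sel-E-n}); a complete argument must either invoke that hypothesis or enlarge the criterion to ``$\leg{d_2}{p}=1$ or $\leg{nd_2}{p}=1$''.

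Two smaller points. First, ``two ternary forms with unit coefficients representing zero over $\BQ_p$, which holds automatically by Hensel'' is not a valid principle: simultaneous solvability of two quadrics is not automatic, and here the reduction of $D_\Lambda$ modulo $p$ is singular along $u_1=u_3=0$ (because $b^2\equiv 2c^2\bmod p$). What one actually needs is a smooth $\BF_p$-point, which exists precisely because $\leg{d_1d_3}{p}=1$ splits the reduction of $H_2$ into two rational planes, each meeting $H_1$ in a smooth conic with unit coefficients. Second, in the case $p\mid d_1$, $p\mid d_3$ you argue necessity but never construct a point when $\leg{d_2}{p}=\leg{n}{p}=1$; the paper does this explicitly (take $t=1$, $u_1=d_2/\gcd(d_1,d_2)$, $u_3^2\equiv d_2$ and $u_2^2\equiv b^2n/d_2\bmod p$), and deferring it wholesale to the Heath-Brown appendix leaves exactly the half of the statement that uses the hypothesis on $n$ unproved.
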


\begin{proof}
Let $p$ be a prime factor of $a$.

Suppose that $D_\Lambda(\BQ_p)\neq\emptyset$.
If $p\mid d_2$, then $p$ divides exactly one of $d_1$ and $d_3$. We may assume that $p\mid d_1$ and $p\nmid d_3$.
Then $p$ divides $u_3,t$ by $H_2,H_3$ and then $u_2,u_1$ by $H_1,H_2$.
So $p\mid\gcd(t,u_1,u_2,u_3)$, which will cause a contradiction.
Hence $p\nmid d_2$.

Suppose that $p\nmid d_1, p\nmid d_3$.
If $D_\Lambda(\BQ_p)\neq\emptyset$, then $\leg{d_1d_3}p=\leg{d_2}p=1$ by $H_2$.
Conversely, if $\leg{d_2}p=1$, then we may take
\[\begin{split}
u_1&=d_2/\gcd(d_1,d_2),\\
u_3^2&=d_2+a^2nt^2/d_3\equiv d_2\bmod p,\\
u_2^2&=d_3+2c^2nt^2/d_2,
\end{split}\]
where $t\in\BZ_p$ such that $d_3+2c^2nt^2/d_2$ is a square in $\BZ_p$.
In fact, if $-2nd_3$ is quadratic residue modulo $p$, then we may take $t=\sqrt{-\frac{d_2d_3}{2c^2n}}$ and $u_2=0$; if $-2nd_1$ is not a quadratic residue modulo $p$, then there exists $t\in\set{0,1,\dots,(p-1)/2}$ such that $d_3+2c^2nt^2/d_2\bmod p$ is a nonzero square.
Hence $D_\Lambda(\BQ_p)$ is non-empty.

Suppose that $p\mid d_1, p\mid d_3$.
If $D_\Lambda(\BQ_p)\neq\emptyset$, then $\leg{d_2n}p=1$ by $H_1$ and $\leg{d_2}p=1$ by $H_2$.
Conversely, if $\leg{d_2}p=\leg{n}p=1$, then we may take $t=1$ and
\[\begin{split}
u_1&=d_2/\gcd(d_1,d_2),\\
u_3^2&=d_2+a^2n/d_3\equiv d_2\bmod p,\\
u_2^2&=d_3+2c^2n/d_2\equiv b^2n/d_2\bmod p.
\end{split}\]
Hence $D_\Lambda(\BQ_p)$ is non-empty.

The rest cases can be proved similarly.
\end{proof}

\begin{lemma}\label{lem:local-solv-2}
Let $n$ be a positive square-free integer prime to $2abc$ and $\Lambda=(d_1,d_2,d_3)$.
If $D_\Lambda(\BQ_v)\neq\emptyset$ for all places $v\neq 2$, then $D_\Lambda(\BQ_2)$ is also non-empty.
\end{lemma}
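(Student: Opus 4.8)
The plan is to reduce the solvability of the genus one curve $D_\Lambda$ to the solvability of its three defining ternary conics $H_1,H_2,H_3$, and then to invoke the product formula for Hilbert symbols; this reciprocity input is exactly what lets the behaviour at the prime $2$ be recovered from all the other places.

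First I would record that, since $a^2+b^2=2c^2$, the three forms satisfy $H_1+H_2+H_3=0$, so $D_\Lambda$ is cut out by any two of them, and a point of $D_\Lambda(\BQ_v)$ projects to a $\BQ_v$-point on each of the conics $H_i$. Each $H_i$ is a nondegenerate ternary quadratic form, so its isotropy over $\BQ_v$ is detected by a single Hilbert symbol. A short computation (clearing the square factors $a^2,b^2,c^2$ and using bilinearity) shows that $H_1,H_2,H_3$ are isotropic over $\BQ_v$ exactly when
\[
[nd_2,-nd_3]_v=0,\qquad [-nd_1,nd_3]_v=0,\qquad [-2nd_1,2nd_2]_v=0
\]
respectively. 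The crucial feature is that the arguments of these symbols lie in $\BQ^\times$, independent of $v$.

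The heart of the argument is then the equivalence: $D_\Lambda(\BQ_v)\neq\emptyset$ if and only if all three symbols above vanish. For $v\neq2$ this is precisely the content of Lemmas~\ref{lem:local-solv}(1),(4), \ref{lem:local-solv-n} and \ref{lem:local-solv-abc}; one checks case by case, according to whether $p$ divides $d_1$ or $d_2$, that the Legendre-symbol conditions stated there coincide with the vanishing of the three Hilbert symbols, where $d_1d_2d_3\in\BQ^{\times2}$ is used to see that any two of the three conditions force the third. Granting the equivalence, the lemma follows at once: if $D_\Lambda(\BQ_v)\neq\emptyset$ for every $v\neq2$, then each of the three symbols vanishes at every $v\neq2$, and the product formula $\sum_v[\,\cdot\,,\,\cdot\,]_v=0$ forces it to vanish at $v=2$ as well; hence all three conics are isotropic over $\BQ_2$, and the equivalence at $v=2$ gives $D_\Lambda(\BQ_2)\neq\emptyset$.

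The hard part will be the reverse direction of this equivalence \emph{at the prime $2$}, namely producing an actual $\BQ_2$-point of $D_\Lambda$ from the isotropy of the three conics over $\BQ_2$. Unlike the odd places, this cannot be quoted from the earlier lemmas, and it is genuinely delicate, since an intersection of two quadrics in $\mathbb{P}^3$ is a genus one curve for which local solvability of the separate conics need not, in general, yield a point on the intersection. I would handle it by an explicit $2$-adic computation: translate the three conditions $[\,\cdot\,,\,\cdot\,]_2=0$ into congruences on $d_1,d_2$ modulo $8$ and verify that they match one of the two cases in Lemma~\ref{lem:local-solv}(3), whose Hensel-type constructions then exhibit the required point. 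This reduces the whole statement to a finite check at $2$, with the reciprocity step carrying the real weight.
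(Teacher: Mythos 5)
Your proposal follows essentially the same route as the paper: each conic $H_i$ is solvable at every $v\neq 2$ because $D_\Lambda$ is, Hilbert reciprocity then forces the three symbols to vanish at $2$, and an explicit mod-$8$ analysis of those conditions lands in one of the two cases of Lemma~\ref{lem:local-solv}(3), whose constructions supply the $\BQ_2$-point. You correctly isolate the only nontrivial step (passing from isotropy of the three conics over $\BQ_2$ back to a point of the intersection), and your plan for it coincides with the paper's argument.
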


\begin{proof}
Since $D_\Lambda(\BQ_v)\neq\emptyset$ for all places $v\neq 2$,
each $H_i$ is locally solvable at $v\neq 2$.
By the product formula of Hilbert symbols, $H_i$ is locally solvable at $2$.
In other words,
\[[nd_2,d_2d_3]_2=[-nd_1,d_3d_1]_2=[2nd_2,d_1d_2]_2=0.\]
Then $[nd_2,d_1]_2=[-nd_1,d_2]_2=0$.
\begin{itemize}
\item If $d_1\equiv d_2\bmod 4$, then $[-n,d_1]_2=[n,d_2]_2=[2,d_1d_2]_2=0$, which forces $4\mid d_1-1$ and $8\mid d_1-d_2$.
\item If $d_1\equiv -d_2\bmod 4$, then $[n,d_1]_2=[-n,-d_1]_2=0$ and $n\equiv -d_1\equiv d_2\bmod 4$. Since $[2,d_1d_2]_2=[2nd_2,d_1d_2]_2=0$, we have $d_1d_2\equiv -1\bmod 8$.
In other words, $4\mid d_1+n$ and $8\mid d_1-d_2+2n$.
\end{itemize}
Hence $D_\Lambda(\BQ_2)\neq\emptyset$ by Lemma~\ref{lem:local-solv-2}(3).
\end{proof}

\subsection{Matrix representation}
\label{Matrix representation}

By the results in the previous subsection, we can express the pure $2$-Selmer group $\Sel_2'\bigl(E^{(n)}\bigr)$ as the kernel of a matrix.
For our purpose, we assume that $n$ is prime to $abc$ and each prime factor of $n$ is a quadratic residue modulo every prime factor of $abc$.

Denote by $n=p_1\cdots p_k$ and
\begin{equation}\label{cd-factor-abc}
a=q_1^{t_1}\cdots q_{\ell_1}^{t_{\ell_1}},\quad
b=q_{\ell_1+1}^{t_{\ell_1+1}}\cdots q_{\ell_2}^{t_{\ell_2}},\quad
c=q_{\ell_2+1}^{t_{\ell_2+1}}\cdots q_\ell^{t_\ell}
\end{equation}
the prime decompositions respectively, where all $t_i>0$ and $0\le \ell_1\le \ell_2\le \ell$.
Let $\Lambda=(d_1,d_2,d_3)\in\Sel_2'\bigl(E^{(n)}\bigr)$ where $d_1,d_2,d_3$ are positive square-free integers dividing $nabc$.
By Lemma~\ref{lem:local-solv-abc}, we have $\gcd(a,d_2)=\gcd(b,d_1)=\gcd(c,d_3)=1$.
In other words, $d_1\mid nac, d_2\mid nbc$ and $d_3\mid nab$.
So we may write
\[\begin{split}
d_1&=p_1^{x_1}\cdots p_k^{x_k}\cdot
	q_1^{z_1}\cdots q_{\ell_1}^{z_{\ell_1}}\cdot
	q_{\ell_2+1}^{z_{\ell_2+1}}\cdots q_\ell^{z_\ell},\\
d_2&=p_1^{y_1}\cdots p_k^{y_k}\cdot
	q_{\ell_1+1}^{z_{\ell_1+1}}\cdots q_{\ell_2}^{z_{\ell_2}}\cdot
	q_{\ell_2+1}^{z_{\ell_2+1}}\cdots q_\ell^{z_\ell},\\
d_3&\equiv p_1^{x_1+y_1}\cdots p_k^{x_k+y_k}\cdot
	q_1^{z_1}\cdots q_{\ell_1}^{z_{\ell_1}}\cdot
	q_{\ell_1+1}^{z_{\ell_1+1}}\cdots q_{\ell_2}^{z_{\ell_2}}\bmod\BQ^{\times2}.
\end{split}\]
Denote by
\[\bfx=(x_1,\dots,x_k)^\rmT,\quad\bfy=(y_1,\dots,y_k)^\rmT\in\BF_2^k,\]
and
\[\bfz=(z_1,\dots,z_{\ell_1},z_{\ell_1+1},\dots,z_{\ell_2},z_{\ell_2+1},\dots,z_\ell)^\rmT\in\BF_2^\ell.\]

Denote by
\[\begin{pmatrix}
	\bfF_1&	\bfF_2 &	\bfF_3 \\
	\bfF_4&	\bfF_5 &	\bfF_6 \\
	\bfF_7&	\bfF_8 &	\bfF_9 \\
\end{pmatrix}=\bigl([q_j,q_i]_{q_i}\bigr)_{i,j}\in M_\ell(\BF_2),\]
where $\bfF_1\in M_{\ell_1}(\BF_2)$ and $\bfF_5\in M_{\ell_2-\ell_1}(\BF_2)$.
Denote by
\[\CM_1=\begin{pmatrix}
					&	\bfF_2	&	\bfF_3	\\
	\bfF_4	& 				&	\bfF_6	\\
	\bfF_7	&	\bfF_8	&	\\
					&	\Delta	&
\end{pmatrix}\in M_{(\ell+\ell_2-\ell_1)\times\ell}(\BF_2),\]
where
\[\Delta=\diag\Bigl(\aleg{-1}{q_{\ell_1+1}},\cdots,\aleg{-1}{q_{\ell_2}}\Bigr).\]

\begin{lemma}\label{lem:Sel-E}
Notations as above.
The map $(d_1,d_2,d_3)\mapsto\bfz$ induces an isomorphism
\[\Sel_2'(E)\simto \Ker \CM_1.\]
\end{lemma}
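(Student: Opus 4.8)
The plan is to prove Lemma~\ref{lem:Sel-E} by specializing the local solvability conditions to the case $n=1$, i.e.\ to $\Sel_2'(E)$ itself, and then reading off exactly which linear constraints on $\bfz\in\BF_2^\ell$ survive. When $n=1$ there are no prime factors $p_i$ of $n$, so the vectors $\bfx,\bfy$ are empty and a class $\Lambda=(d_1,d_2,d_3)$ is determined entirely by $\bfz$, recording the exponents of the $q_j\mid abc$ in $d_1,d_2,d_3$ as in the displayed formulas. First I would confirm that the map $\Lambda\mapsto\bfz$ is well-defined and injective on $\Sel_2'(E)$: this is immediate from the normalization established just before the lemma, namely that every pure $2$-Selmer class has a unique representative with $d_1,d_2,d_3$ positive square-free divisors of $abc$ satisfying $\gcd(a,d_2)=\gcd(b,d_1)=\gcd(c,d_3)=1$, which is exactly what forces the $\bfz$-parametrization.

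The core of the argument is to show that $\Lambda$ lies in $\Sel_2'(E)$ if and only if $\CM_1\bfz=\bf0$. By definition $\Lambda\in\Sel_2'(E)$ means $D_\Lambda(\BQ_v)\ne\emptyset$ for all places $v$. I would dispose of the places not dividing $abc$ first: since $n=1$ and the $d_i$ divide $abc$, Lemma~\ref{lem:local-solv}(1) handles primes $p\nmid 2abc$ automatically, the real place is controlled by $d_2>0$ (automatic, as the $d_i$ are taken positive), and Lemma~\ref{lem:local-solv-2} shows solvability at $2$ follows from solvability at all $v\ne 2$. Thus the only genuine constraints come from the primes $q_i\mid abc$, and these are governed by Lemma~\ref{lem:local-solv-abc}. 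The key step is therefore to translate each local condition at $q_i$ into a linear equation over $\BF_2$. Writing the solvability conditions multiplicatively as Legendre symbols $\leg{d_j}{q_i}=1$ and converting to additive Hilbert symbols $[\,\cdot\,,\,\cdot\,]_{q_i}$, each exponent $z_j$ contributes a term $[q_j,q_i]_{q_i}$, which is precisely the $(i,j)$ entry of the matrix $\bigl([q_j,q_i]_{q_i}\bigr)_{i,j}$ whose blocks $\bfF_1,\dots,\bfF_9$ were introduced. I would carry out the bookkeeping block by block: for $q_i\mid a$ the condition involves $\leg{d_2}{q_i}$ (hence the rows giving $\bfF_4,\bfF_6$ acting on the $d_1$- and $d_2$-exponents but \emph{not} on the $q_i\mid a$ part itself, since $\gcd(a,d_2)=1$ kills the diagonal $\bfF_1$ block), for $q_i\mid b$ it involves $\leg{d_1}{q_i}$ (rows $\bfF_7,\bfF_8$), and for $q_i\mid c$ it involves $\leg{d_3}{q_i}$ (rows $\bfF_2,\bfF_3$).

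The subtle extra rows are the $\Delta$-block. For a prime $q_i\mid b$ the solvability condition in Lemma~\ref{lem:local-solv-abc}(2) splits according to whether $q_i\mid d_2$ or not, and when $q_i\mid d_2$ it imposes the additional requirement $\leg{-n}{q_i}=\leg{-1}{q_i}=1$ (using $n=1$); this is exactly the content of $\Delta=\diag\bigl(\aleg{-1}{q_{\ell_1+1}},\dots,\aleg{-1}{q_{\ell_2}}\bigr)$ acting on the $\bfz$-coordinates indexed by $b$. I would verify that this captures the dichotomy correctly, i.e.\ that the single-equation-per-prime packaging faithfully records both branches of the $q_i\mid b$ case. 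The main obstacle, and where I would spend the most care, is precisely this translation: ensuring that the ``$p\nmid d_j$ versus $p\mid d_j$'' case distinctions in Lemma~\ref{lem:local-solv-abc} assemble into a \emph{single} linear system rather than a union of affine conditions, and checking that the relation $d_1d_2d_3\in\BQ^{\times2}$ (which determines the $d_3$-exponents from those of $d_1,d_2$, as reflected in the third displayed formula) is consistently used so that the $c$-rows $\bfF_2,\bfF_3$ really read off $\leg{d_3}{q_i}$ via the exponents of $d_1$ and $d_2$. Once each local condition at every $q_i\mid abc$ is matched against the corresponding row of $\CM_1$, the equivalence $\Lambda\in\Sel_2'(E)\Leftrightarrow\bfz\in\Ker\CM_1$ follows, and combined with the bijectivity of $\Lambda\mapsto\bfz$ this yields the claimed isomorphism $\Sel_2'(E)\simto\Ker\CM_1$.
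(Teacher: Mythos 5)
Your overall strategy is the same as the paper's: dispose of the places outside $abc$ via Lemma~\ref{lem:local-solv}(1), (4) and Lemma~\ref{lem:local-solv-2}, observe that $n=1$ collapses each case distinction in Lemma~\ref{lem:local-solv-abc} to a single $\BF_2$-linear condition on $\bfz$ (with the $\Delta$-rows recording the residual requirement $\leg{-1}{q_i}=1$ when a prime $q_i\mid b$ divides $d_2$), and identify the resulting system with $\CM_1$. The conditions you attach to each type of prime are also the right ones: $\leg{d_2}{q_i}=1$ for $q_i\mid a$, $\leg{d_1}{q_i}=1$ for $q_i\mid b$, and $\leg{d_3}{q_i}=1$ for $q_i\mid c$, and your resolution of the ``union of affine conditions'' worry is exactly how the paper handles it.

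However, your explicit matching of these conditions to the named blocks of $\CM_1$ is cyclically shifted and, as written, would not reproduce the matrix in the statement. In $\bigl([q_j,q_i]_{q_i}\bigr)_{i,j}$ the row index $i$ is the prime at which solvability is tested, so the first row block --- the one containing $\bfF_2,\bfF_3$ --- carries the conditions at $q_i\mid a$: since $d_2$ is supported on the primes dividing $bc$, one has $\aleg{d_2}{q_i}=\sum_{j>\ell_1}z_j[q_j,q_i]_{q_i}$, which is the row $(\bfO,\bfF_2,\bfF_3)$ applied to $\bfz$, the $\bfO$ in the $\bfF_1$-slot reflecting $\gcd(a,d_2)=1$. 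Likewise $(\bfF_4,\bfO,\bfF_6)\bfz=\bf0$ encodes $\aleg{d_1}{q_i}=0$ for $q_i\mid b$ (the $\bfO$ in the $\bfF_5$-slot coming from $\gcd(b,d_1)=1$), and $(\bfF_7,\bfF_8,\bfO)\bfz=\bf0$ encodes $\aleg{d_3}{q_i}=0$ for $q_i\mid c$. You instead assigned $\bfF_4,\bfF_6$ to $q_i\mid a$, $\bfF_7,\bfF_8$ to $q_i\mid b$, and $\bfF_2,\bfF_3$ to $q_i\mid c$; this is also internally inconsistent, since $(\bfF_4,\bfO,\bfF_6)$ sees exactly the columns supported on $ac$ and therefore computes a symbol of $d_1$, not of $d_2$ as your $q_i\mid a$ condition requires. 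This is a repairable bookkeeping slip rather than a conceptual gap, but since the entire content of the lemma is this dictionary between local conditions and rows of $\CM_1$, the block identification must be corrected before the argument is complete.
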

\begin{proof}
In the language of linear algebra, Lemma~\ref{lem:local-solv-abc} tells that
\begin{enumerate}
\item $(\bfO,\bfF_2,\bfF_3)\bfz={\bf0}$;
\item $(\bfF_4,\bfO,\bfF_6)\bfz={\bf0}$ and $\Delta (z_{\ell_1+1},\dots,z_{\ell_2})^\rmT={\bf0}$;
\item $(\bfF_7,\bfF_8,\bfO)\bfz={\bf0}$.
\end{enumerate}
The result then follows from Lemmas~\ref{lem:local-solv}(4) and \ref{lem:local-solv-2} by noting that $n=1$.
\end{proof}

Denote by
\[\bfD_u=\diag\set{\aleg u{p_1},\cdots,\aleg u{p_k}}\in M_k(\BF_2),\]
\begin{equation}\label{eq:A-n}
\bfA=\bfA_n=\bigl([p_j,-n]_{p_i}\bigr)_{i,j}\in M_k(\BF_2)
\end{equation}
and
\[(\bfG_1,\bfG_2,\bfG_3)=\bigl([q_j,-n]_{p_i}\bigr)_{i,j}\in M_{k\times\ell}(\BF_2),\]
where $\bfG_1\in M_{k\times\ell_1}(\BF_2)$ and $\bfG_2\in M_{k\times(\ell_2-\ell_1)}(\BF_2)$.
Denote the Monsky matrix by
\begin{equation}\label{eq:bfM-n}
\bfM_n=\begin{pmatrix}
\bfA+\bfD_{-2}&\bfD_2\\
\bfD_2&\bfA+\bfD_2
\end{pmatrix}
\end{equation}
and the generalized Monsky matrix by
\begin{equation}\label{eq:CM-n}
\CM_n=\begin{pmatrix}
\bfM_n&\bfG\\
&\CM_1
\end{pmatrix},
\quad\text{where}\quad
\bfG=\begin{pmatrix}
			\bfG_1	&				&\bfG_3 \\
							&\bfG_2&\bfG_3
\end{pmatrix}.
\end{equation}
See \cite[Appendix]{HeathBrown1994}.

\begin{proposition}\label{pro:Sel-E-n}
Notations as above.
The map $(d_1,d_2,d_3)\mapsto \svecc{\bfx}{\bfy}{\bfz}$ induces an isomorphism
\[\Sel_2'\bigl(E^{(n)}\bigr)\simto \Ker \CM_n.\]
\end{proposition}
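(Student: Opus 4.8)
The plan is to show that, with the normalization already in force (each class in $\Sel_2'(E^{(n)})$ has a unique representative $\Lambda=(d_1,d_2,d_3)$ consisting of \emph{odd} positive square-free integers dividing $nabc$, since $2abc$ is coprime to $n$ and $abc$ is odd), membership $\Lambda\in\Sel_2'(E^{(n)})$ is equivalent to the single $\BF_2$-linear system $\CM_n\svecc{\bfx}{\bfy}{\bfz}={\bf0}$. Because the normalized representatives are closed under componentwise multiplication followed by square-free reduction, they form a group isomorphic to $\Sel_2'(E^{(n)})$, and the exponent map $(d_1,d_2,d_3)\mapsto(\bfx,\bfy,\bfz)$ is an injective homomorphism into $\BF_2^{2k+\ell}$; thus it suffices to identify its image with $\Ker\CM_n$. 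First I would record which local conditions actually survive: by Lemma~\ref{lem:local-solv}(1),(4) the real place and all primes $p\nmid 2abcn$ impose nothing, since the $d_i$ are positive and divide $nabc$, and by Lemma~\ref{lem:local-solv-2} solvability at $2$ is forced by solvability everywhere else. Hence $\Lambda\in\Sel_2'(E^{(n)})$ if and only if the conditions of Lemma~\ref{lem:local-solv-n} hold at each $p_i\mid n$ and those of Lemma~\ref{lem:local-solv-abc} hold at each $q_i\mid abc$.

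The heart of the argument is converting the conditions at $p_i\mid n$ into the block $(\bfM_n\mid\bfG)$. Here I would first evaluate the additive Hilbert symbols defining $\bfA$: the supplementary law at the odd prime $p_i$ gives $[p_j,-n]_{p_i}=\aleg{p_j}{p_i}$ for $j\neq i$ and $[p_i,-n]_{p_i}=\aleg{n/p_i}{p_i}=\sum_{j\neq i}\aleg{p_j}{p_i}$. Expanding $\aleg{d_1}{p_i}$ and $\aleg{d_2}{p_i}$ in terms of $\bfx,\bfy,\bfz$ and the symbols $\aleg{p_j}{p_i},\aleg{q}{p_i}$, I would then verify, for each of the four possibilities for $(x_i,y_i)$, that the $i$-th of the first $k$ rows of $(\bfM_n\mid\bfG)$ reproduces the condition on $d_1$ in Lemma~\ref{lem:local-solv-n}, and the $i$-th of the next $k$ rows reproduces the condition on $d_2$. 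The subtle point is that a single linear row must encode four distinct Legendre conditions simultaneously; this is exactly what the diagonal corrections $\bfD_{-2}$ and $\bfD_2$ accomplish, since the factors $\leg{2}{p_i}$ and $\leg{-n/d_i}{p_i}$ that appear precisely when $p_i\mid d_1$ or $p_i\mid d_2$ are supplied by the terms $\aleg{-2}{p_i}x_i$, $\aleg{2}{p_i}x_i$, $\aleg{2}{p_i}y_i$ and $\aleg{2}{p_i}y_i$ in the respective rows. This case-by-case matching, together with the Hilbert symbol computation, is where essentially all the work lies.

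For the conditions at $q_i\mid abc$ I would invoke the standing hypothesis that every prime factor of $n$ is a quadratic residue modulo every $q_i$. This makes each $\leg{p_j}{q_i}$ trivial, so the symbols $\leg{d_1}{q_i},\leg{d_2}{q_i},\leg{d_3}{q_i}$ occurring in Lemma~\ref{lem:local-solv-abc} lose their dependence on $\bfx,\bfy$ and depend only on $\bfz$, while $\leg{n}{q_i}=1$ and $\leg{-n}{q_i}=\leg{-1}{q_i}$. Consequently the bottom block of $\CM_n$ decouples from $\bfx,\bfy$ and collapses to exactly the $n=1$ system of Lemma~\ref{lem:Sel-E}: the $\bfF$-rows come from $\leg{d_j}{q_i}=1$ and the $\Delta$-rows come from the residual condition $\leg{-n}{q_i}=\leg{-1}{q_i}=1$, imposed for $q_i\mid b$ precisely when $q_i\mid d_2$. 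Assembling the two blocks yields $\CM_n\svecc{\bfx}{\bfy}{\bfz}={\bf0}$, and the isomorphism follows. I expect the unified four-case verification of the $p_i\mid n$ block in the second paragraph to be the main obstacle, with the remaining steps reducing to bookkeeping once the Hilbert symbol values and the quadratic-residue simplification are in hand.
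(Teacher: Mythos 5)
Your proposal is correct and follows exactly the route of the paper, whose own proof is just a one-line citation of Lemmas~\ref{lem:local-solv}(4), \ref{lem:local-solv-n}, \ref{lem:local-solv-abc}, \ref{lem:local-solv-2} and \ref{lem:Sel-E} together with $\leg nq=1$; you have merely written out the Hilbert-symbol evaluations and the four-case row-matching that the authors leave implicit. The details you supply (in particular $[p_i,-n]_{p_i}=\aleg{n/p_i}{p_i}$ and the role of $\bfD_{\pm2}$ in unifying the four cases of Lemma~\ref{lem:local-solv-n}) check out.
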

\begin{proof}
This follows from Lemmas~\ref{lem:local-solv}(4), \ref{lem:local-solv-n}, \ref{lem:local-solv-abc}, \ref{lem:local-solv-2} and \ref{lem:Sel-E} with $\leg nq=1$.
\end{proof}

\section{Second minimal Shafarevich-Tate group}
\label{Second minimal Shafarevich-Tate group}

In this section, we will prove Theorem~\ref{thm:main} by calculating the Cassels pairing.
Let $n=p_1\cdots p_k\equiv1\bmod8$ be a positive square-free integer prime to $abc$ where each $p_i$ is a quadratic residue modulo every prime factor of $abc$.

\subsection{Proof of Theorem~\ref{thm:main}(A)}

\begin{lemma}\label{lem:pm1-mod-8}
Assume that each $p_i\equiv\pm1\bmod8$.
Let $\bfd=(s_1,\cdots,s_k)^\rmT$ be a column vector in $\BF_2^k$ and $d=p_1^{s_1}\cdots p_k^{s_k}$.
\begin{enumerate}
\item $\bfd\in\Ker(\bfA+\bfD_{-1})$ if and only if $\bfd+\aleg{-1}{d}{\bf1}\in\Ker\bfA^\rmT$.
\item Assume that $\Sel_2(E/\BQ)\cong(\BZ/2\BZ)^2$.
Then $\dim_{\BF_2}\Sel_2'\bigl(E^{(n)}\bigr)=2$ if and only if $h_4(n)=1$.
In which case, $\Sel_2'\bigl(E^{(n)}\bigr)$ is generated by $(2,2,1)$ and $(d,1,d)$, where $\Ker(\bfA+\bfD_{-1})=\set{{\bf0},\bfd}$.
\end{enumerate}
\end{lemma}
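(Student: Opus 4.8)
The plan is to turn both statements into linear algebra over $\BF_2$ governed by the explicit entries of $\bfA$. First I would record these entries from the Hilbert-symbol formula: for a prime $p_i\mid n$ one has $v_{p_i}(-n)=1$, so $\bfA_{ij}=[p_j,-n]_{p_i}=\aleg{p_j}{p_i}$ for $i\neq j$ and $\bfA_{ii}=\aleg{n/p_i}{p_i}=\sum_{j\neq i}\aleg{p_j}{p_i}$; in particular every row of $\bfA$ sums to zero, so $\bfA{\bf1}={\bf0}$. Additive quadratic reciprocity, $\aleg{p_i}{p_j}+\aleg{p_j}{p_i}=\aleg{-1}{p_i}\aleg{-1}{p_j}$, then upgrades this to the single identity
\[\bfA^\rmT=\bfA+\bfD_{-1}+\bfb_{-1}\bfb_{-1}^\rmT,\qquad \bfb_{-1}=\bfD_{-1}{\bf1},\]
where the diagonal matches because $\aleg{-1}{p_i}^2=\aleg{-1}{p_i}$ in $\BF_2$. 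Since $n\equiv1\bmod8$ we have $\bfb_{-1}^\rmT{\bf1}=\aleg{-1}{n}=0$, whence $\bfA^\rmT{\bf1}=\bfb_{-1}$; these two facts are the engine for everything below.

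For part (1), I set $\delta=\aleg{-1}{d}=\bfb_{-1}^\rmT\bfd$, so the map $\bfd\mapsto\bfd+\delta{\bf1}$ is the linear operator $\bfI+{\bf1}\bfb_{-1}^\rmT$, which is an involution because $\bfb_{-1}^\rmT{\bf1}=0$. Using the displayed identity together with $\bfA^\rmT{\bf1}=\bfb_{-1}$, a one-line computation gives $\bfA^\rmT(\bfd+\delta{\bf1})=(\bfA+\bfD_{-1})\bfd+\delta\bfb_{-1}+\delta\bfb_{-1}=(\bfA+\bfD_{-1})\bfd$; as the left side vanishes iff $\bfd+\delta{\bf1}\in\Ker\bfA^\rmT$ and the right side vanishes iff $\bfd\in\Ker(\bfA+\bfD_{-1})$, both implications follow simultaneously, the involution guaranteeing that the correspondence is a bijection.

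For part (2), the hypothesis $p_i\equiv\pm1\bmod8$ makes $\bfD_2=\bfO$ and $\bfD_{-2}=\bfD_{-1}$, so $\bfM_n$ is block diagonal with blocks $\bfA+\bfD_{-1}$ and $\bfA$, giving $\Ker\bfM_n=\Ker(\bfA+\bfD_{-1})\oplus\Ker\bfA$. The same hypothesis kills the last column $([2,-n]_{p_i})_i$ of $\bfR_n$, so $\rank\bfR_n=\rank\bfA$ and Proposition~\ref{pro:4rank} yields $h_4(n)=k-\rank\bfA=\dim\Ker\bfA$; combined with part (1) this gives $\dim\Ker(\bfA+\bfD_{-1})=\dim\Ker\bfA^\rmT=\dim\Ker\bfA=h_4(n)$. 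Next, since $\Sel_2(E/\BQ)\cong(\BZ/2\BZ)^2$ and $E(\BQ)[2]\cong(\BZ/2\BZ)^2$, Lemma~\ref{lem:Sel-E} forces $\Ker\CM_1=\Sel_2'(E)=0$; the block-triangular shape of $\CM_n$ then shows that any $(\bfx,\bfy,\bfz)\in\Ker\CM_n$ has $\bfz\in\Ker\CM_1=0$, so $\Ker\CM_n\cong\Ker\bfM_n$ and by Proposition~\ref{pro:Sel-E-n}
\[\dim_{\BF_2}\Sel_2'\bigl(E^{(n)}\bigr)=\dim\Ker\bfM_n=2h_4(n).\]
This is $2$ exactly when $h_4(n)=1$.

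Finally I would read off the generators. When $h_4(n)=1$ we have $\Ker\bfA=\langle{\bf1}\rangle$ and $\Ker(\bfA+\bfD_{-1})=\{{\bf0},\bfd\}$, so $\Ker\bfM_n$ has basis $({\bf0},{\bf1})$ and $(\bfd,{\bf0})$. Translating through the dictionary $\bigl(\bfx,\bfy\bigr)\mapsto(d_1,d_2,d_3)$ of \S\ref{Matrix representation} (with the $abc$-part absent since $\bfz=0$) sends these to the triples $(1,n,n)$ and $(d,1,d)$; since $(1,n,n)=(2,2,1)\cdot(2,2n,n)$ differs from $(2,2,1)$ only by the torsion class $(2,2n,n)$ of \eqref{eq:torsion-homogeneous}, the two generators may be taken to be $(2,2,1)$ and $(d,1,d)$. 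The main obstacle is the reciprocity identity for $\bfA^\rmT$ in the first paragraph: pinning down the rank-one correction $\bfb_{-1}\bfb_{-1}^\rmT$ and the vanishing $\bfb_{-1}^\rmT{\bf1}=0$ is exactly what makes both parts go through, after which the only remaining care is the bookkeeping that identifies $(1,n,n)$ with $(2,2,1)$ in the pure Selmer group.
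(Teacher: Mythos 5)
Your proposal is correct and follows essentially the same route as the paper: the same reciprocity identity $\bfA^\rmT=\bfA+\bfD_{-1}+\bfb_{-1}\bfb_{-1}^\rmT$ together with $\bfb_{-1}^\rmT{\bf1}=0$ and $\bfA^\rmT{\bf1}=\bfb_{-1}$ drives part (1) via the involution $\bfI+{\bf1}\bfb_{-1}^\rmT$, and part (2) proceeds exactly as in the paper by reducing to the block-diagonal $\bfM_n=\diag\{\bfA+\bfD_{-1},\bfA\}$, identifying $\bfR_n=(\bfA,{\bf0})$, and trading $(1,n,n)$ for $(2,2,1)$ modulo the torsion class $(2,2n,n)$. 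The only cosmetic difference is that you phrase the dimension count as $\dim\Ker\bfM_n=2h_4(n)$ rather than via ranks, which changes nothing.
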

\begin{proof}
(1) We may rearrange the ordering of the prime factors $p_i$ such that
 $p_1\equiv\cdots\equiv p_{k'}\equiv-1\bmod8$ and $p_{k'+1}\equiv\cdots\equiv p_k\equiv1\bmod 8$.
Then $\bfb_{-1}=\svec{{\bf1}'}{\bf0}$, where ${\bf1}'\in \BF_2^{k'}$.
By the quadratic reciprocity law, one can show that
\[\bfA^\rmT=\bfA+\bfD_{-1}+\bfb_{-1}\bfb_{-1}^\rmT.\]
Since $n\equiv1\bmod8$, $k'$ is even and $\bfb_{-1}^\rmT{\bf1}={\bf1}^\rmT\bfb_{-1}=\bfb_{-1}^\rmT\bfb_{-1}=k'=0\in\BF_2$.
Since $\bfA{\bf1}={\bf0}$, we have
\[\bfA^\rmT{\bf1}=(\bfA+\bfD_{-1}+\bfb_{-1}\bfb_{-1}^\rmT){\bf1}=\bfb_{-1}\]
and
\[\bfA^\rmT(\bfI+{\bf1}\bfb_{-1}^\rmT)=\bfA^\rmT+\bfb_{-1}\bfb_{-1}^\rmT=\bfA+\bfD_{-1}.\]
Hence $\bfd\in\Ker(\bfA+\bfD_{-1})$ if and only if
\[(\bfI+{\bf1}\bfb_{-1}^\rmT)\bfd=\bfd+(\bfb_{-1}^\rmT\bfd){\bf1}=\bfd+\aleg{-1}{d}{\bf1}\in\Ker\bfA^\rmT.\]

(2) Since $\dim_{\BF_2}\Sel_2'(E)=0$, we have $\Ker\CM_1=0$ by Lemma~\ref{lem:Sel-E}.
By Proposition \ref{pro:Sel-E-n}, $\dim_{\BF_2}\Sel_2'\bigl(E^{(n)}\bigr)=2$ if and only if the rank of
\[\bfM_n=\diag\set{\bfA+\bfD_{-1},\bfA}\]
is $2k-2$.
By (1), we have $\rank\bfA=\rank(\bfA+\bfD_{-1})$ and then
\[\dim_{\BF_2}\Sel_2'\bigl(E^{(n)}\bigr)=2\iff
\rank\bfA=k-1.\]
Note that the R\'edei matrix of $\BQ(\sqrt{-n})$ is $\bfR_n=(\bfA,{\bf0})$.
Then $h_4(n)=1$ if and only if $\rank\bfA=k-1$ by Proposition~\ref{pro:4rank}.

If $\rank\bfA=k-1$, then $\Ker\bfA=\set{{\bf0},{\bf1}}$.
Hence
\[\Ker\CM_n=\set{
\svecc{\bf0}{\bf0}{\bf0},
\svecc{\bf0}{\bf1}{\bf0},
\svecc{\bfd}{\bf0}{\bf0},
\svecc{\bfd}{\bf1}{\bf0}}.\]
In other words, $\Sel_2'\bigl(E^{(n)}\bigr)$ is generated by $(1,n,n)$ and $(d,1,d)$.
Conclude the result by the fact that $(1,n,n)-(2,2,1)=(2,2n,n)$  corresponds a torsion, see~\eqref{eq:torsion-homogeneous}.
\end{proof}

\begin{theorem}\label{thm:main_A}
Assume that $\Sel_2(E/\BQ)\cong(\BZ/2\BZ)^2$.
Let $n$ be a positive square-free integer prime to $abc$ where each prime factor of $n$ is a quadratic residue modulo every prime factor of $abc$.
If all prime factors of $n\equiv1\bmod8$ are congruent to $\pm1$ modulo $8$, then the following are equivalent:
\begin{enumerate}
\item $\rank_\BZ E^{(n)}(\BQ)=0$ and $\Sha(E^{(n)}/\BQ)[2^\infty]\cong(\BZ/2\BZ)^2$;
\item $h_4(n)=1$ and $h_8(n)=0$.
\end{enumerate}
\end{theorem}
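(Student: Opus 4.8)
The plan is to reduce everything, via Lemma~\ref{lem:non-deg}, to a single Cassels pairing value and then to match that value with the $8$-rank criterion of Proposition~\ref{pro:8rank}. Applying Lemma~\ref{lem:non-deg} with $t=1$ (legitimate since $E^{(n)}(\BQ)[2]\cong(\BZ/2\BZ)^2$ by Proposition~\ref{pro:torsion-subgroup}), statement (1) is equivalent to $\Sel_2'(E^{(n)})\cong(\BZ/2\BZ)^2$ together with non-degeneracy of the Cassels pairing. By Lemma~\ref{lem:pm1-mod-8}(2) the first condition is exactly $h_4(n)=1$, and then $\Sel_2'(E^{(n)})$ has basis $\Lambda_1=(2,2,1)$ and $\Lambda_2=(d,1,d)$, where $\Ker(\bfA+\bfD_{-1})=\{{\bf0},\bfd\}$. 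Since the Cassels pairing is alternating, its Gram matrix in this basis is $\begin{pmatrix}0&x\\x&0\end{pmatrix}$ with $x=\pair{\Lambda_1,\Lambda_2}$, so non-degeneracy is equivalent to $\pair{\Lambda_1,\Lambda_2}=1$. Hence it suffices to prove, assuming $h_4(n)=1$, that $\pair{(2,2,1),(d,1,d)}=1$ if and only if $h_8(n)=0$.

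Next I would make the right-hand side concrete. Because every prime factor of $n$ is $\equiv\pm1\bmod 8$, the last column of the R\'edei matrix vanishes, so $\bfR_n=(\bfA,{\bf0})$ and the nontrivial class of $\CA[2]\cap\CA^2$ is $[(2,\sqrt{-n})]$. With $h_4(n)=1$ one has $h_8(n)\in\{0,1\}$, and $h_8(n)=0$ means precisely that $[(2,\sqrt{-n})]\notin\CA^4$. Applying Proposition~\ref{pro:8rank} with $2^r d=2$ (so $r=1$, odd part $d=1$), this is equivalent to $\bfb_\gamma\notin\Im\bfR_n$, where $(\alpha,\beta,\gamma)$ is a primitive solution of $\alpha^2+n\beta^2=2\gamma^2$. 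The target therefore becomes the clean statement $\pair{(2,2,1),(d,1,d)}=1\iff\bfb_\gamma\notin\Im\bfR_n$.

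I would then compute the pairing following the recipe recalled after~\eqref{eq:pure2selmer}, as in~\cite{Cassels1998,Wang2016}. Working in the homogeneous space $D_{(2,2,1)}$ and pairing against $\Lambda'=(d,1,d)$ collapses the local terms to $\pair{\Lambda,\Lambda'}_v=[L_1(P_v),d]_v+[L_3(P_v),d]_v$. The solution $(\alpha,\beta,\gamma)$ furnishes explicit rational points on the conics $H_i$ — for instance $Q_1=(\beta:b\gamma:b\alpha)$ lies on $H_1$, since $-b^2n\beta^2+2b^2\gamma^2-b^2\alpha^2=b^2(2\gamma^2-\alpha^2-n\beta^2)=0$ — while the remaining points and the tangent forms $L_i$ come from the $2$-torsion data in~\eqref{eq:torsion-homogeneous}. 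Choosing local points $P_v$ from the solvability criteria of Lemmas~\ref{lem:local-solv}, \ref{lem:local-solv-n} and \ref{lem:local-solv-abc}, Lemma~\ref{lem:cassels} annihilates every prime of good reduction, leaving a finite sum over $\{2,\infty\}$ and the primes dividing $abcn$ and $\alpha\beta\gamma$.

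Finally I would evaluate the surviving Hilbert symbols and, using the product formula and quadratic reciprocity, reorganize them into additive Jacobi symbols of the form $\aleg{\gamma}{\cdot}$. The standing hypotheses — $n\equiv1\bmod 8$, all $p_i\equiv\pm1\bmod 8$, and each $p_i$ a quadratic residue modulo every prime factor of $abc$ — are exactly what force the spurious contributions at $2$, at $\infty$, and at the primes dividing $abc$ to cancel, leaving a single expression recording whether $\bfb_\gamma$ lies in $\Im\bfR_n$. I expect this last step to be the main obstacle: the bookkeeping of the local symbols at $2$ and at the ramified primes of $abc$, and the precise matching of the resulting reciprocity expression with the R\'edei-matrix membership of Proposition~\ref{pro:8rank}. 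Once this identification is secured, $\pair{(2,2,1),(d,1,d)}=1\iff\bfb_\gamma\notin\Im\bfR_n\iff h_8(n)=0$, which closes the chain of equivalences and proves the theorem.
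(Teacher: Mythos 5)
Your proposal is correct and follows essentially the same route as the paper: the same reduction via Lemma~\ref{lem:non-deg} and Lemma~\ref{lem:pm1-mod-8}(2) to the single value $\pair{(2,2,1),(d,1,d)}$, the same auxiliary solution of $\alpha^2+n\beta^2=2\gamma^2$ with the point $Q_1=(\beta,b\gamma,b\alpha)$ and $Q_3=(0,1,1)$, and the same translation of $h_8(n)$ into the condition $\bfb_\gamma\in\Im\bfR_n$ via Proposition~\ref{pro:8rank}. The step you flag as the main obstacle is indeed where the paper does its work, evaluating the local symbols at $p\mid n$ and at $2$ to get $\pair{\Lambda,\Lambda'}=\aleg{\gamma}{d}+\aleg{-1}{d}\aleg{\gamma}{n}=\bfb_\gamma^\rmT\bigl(\bfd+\aleg{-1}{d}{\bf1}\bigr)$, which matches the kernel description of $\bfA^\rmT$ from Lemma~\ref{lem:pm1-mod-8}(1); your outline is consistent with that computation.
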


\begin{proof}
By Lemma~\ref{lem:non-deg}, (1) is equivalent to say, $\Sel_2'\bigl(E^{(n)}\bigr)$ has dimension $2$ and the Cassels pairing on it is non-degenerate.
By Lemma~\ref{lem:pm1-mod-8}(2), $\dim_{\BF_2}\Sel_2'\bigl(E^{(n)}\bigr)=2$ if and only if $h_4(n)=1$.

Since all prime factors of $n$ are congruent to $\pm1$ modulo $8$, $2$ is a norm and there exists a primitive triple $(\alpha,\beta,\gamma)$ of positive integers such that
\[\alpha^2+n\beta^2=2\gamma^2.\]
It's easy to see that all of $\alpha,\beta,\gamma$ are odd.

Assume that $h_4(n)=1$.
Then by Lemma~\ref{lem:pm1-mod-8}(2), $\Sel_2'\bigl(E^{(n)}\bigr)$ is generated by $\Lambda=(2,2,1)$ and $\Lambda'=(d,1,d)$.
Recall that $D_\Lambda$ is
\[\begin{cases}
H_1:&-b^2nt^2+2u_2^2-u_3^2=0,\\
H_2:&-a^2nt^2+u_3^2-2u_1^2=0,\\
H_3:&c^2nt^2+u_1^2-u_2^2=0.
\end{cases}\]
Choose
\[\begin{aligned}
Q_1&=(\beta,b\gamma,b\alpha)\in H_1(\BQ),
&L_1&=bn\beta t-2\gamma u_2+\alpha u_3,\\
Q_3&=(0,1,1)\in H_3(\BQ),
&L_3&=u_1-u_2.
\end{aligned}\]
By Lemma~\ref{lem:cassels}, we have
\[\pair{\Lambda,\Lambda'}=\sum_{p\mid 2nabc} \bigl[L_1L_3(P_p),d\bigr]_p\]
for any $P_p\in D_\Lambda(\BQ_p)$.
Since $\leg{p_i}{q}=1$ for any prime $q\mid abc$, we have $\leg dq=1$ and $\pair{\Lambda,\Lambda'}_q=0$.

For $p\mid n$, $\alpha^2\equiv 2\gamma^2\bmod p$.
We may take $\sqrt{2}\in\BQ_p$ such that $\sqrt{2}\gamma\equiv \alpha\bmod p$.
Take $P_p=(t,u_1,u_2,u_3)=(0,1,-1,\sqrt{2})$, then
\[L_1L_3(P_p)=2(2\gamma+\sqrt{2}\alpha)\equiv 8\gamma \bmod p\]
and
\[\pair{\Lambda,\Lambda'}_p=\bigl[L_1L_3(P_p),d\bigr]_p=[\gamma,d]_p.\]

Note that
$n(b\beta)^2-(a\alpha)^2=2(b^2\gamma^2-c^2\alpha^2)\equiv 0\bmod{16}$, we may take $\sqrt{n}\in\BQ_2$ such that $b\beta\sqrt{n}\equiv a\alpha\bmod 8$.
Take $P_2=(1,0,c\sqrt{n},-a\sqrt{n})$, then
\[L_1L_3(P_2)
	=-c\sqrt{n}(bn\beta-2c\gamma\sqrt{n}-a\alpha\sqrt{n})
	=2c^2n\gamma+cn(a\alpha-b\beta\sqrt{n})\]
and
\[\pair{\Lambda,\Lambda'}_2=\bigl[L_1L_3(P_2),d\bigr]_2
	=[2c^2n\gamma,d]_2=[\gamma,d]_2
	=\aleg{-1}d \aleg{-1}\gamma.\]
Since $\alpha^2\equiv-n\beta^2\bmod\gamma$, we have $\leg{-1}\gamma=\leg n\gamma=\leg\gamma n$.
Hence
\[\pair{\Lambda,\Lambda'}=\sum_{p\mid n}\pair{\Lambda,\Lambda'}_p+\pair{\Lambda,\Lambda'}_2=\aleg\gamma d+\aleg{-1}d\aleg\gamma n.\]

Since $\bfR_n=(\bfA,{\bf0})$, we have $\CA[2]\cap \CA^2=\set{\bigl[(1)\bigr],\bigl[(2,\sqrt{-n})\bigr]}$.
Since $\Ker\bfA^\rmT=\set{{\bf0},{\bfd+\aleg{-1}{d}{\bf1}}}$ by Lemma~\ref{lem:pm1-mod-8}(1), we have
\[\Im\bfR_n=\Im\bfA=\set{\bfu:\bfu^\rmT\Bigl(\bfd+\aleg{-1}{d}{\bf1}\Bigr)=0}.\]
By Proposition~\ref{pro:8rank}, $[(2,\sqrt{-n})]\in \CA^4$ if and only if
\[\bfb_\gamma=\biggl(\aleg \gamma{p_1},\dots,\aleg \gamma{p_k}\biggr)^\rmT\in\Im\bfR_n,\]
if and only if
\[\pair{\Lambda,\Lambda'}
=\aleg{\gamma}{d}+\aleg{-1}{d}\aleg{\gamma}{n}
=\bfb_\gamma^\rmT\Bigl(\bfd+\aleg{-1}{d}{\bf1}\Bigr)=0.\]
In conclusion, the Cassels pairing is non-degenerate if and only if $h_8(n)=0$.
\end{proof}

\subsection{Proof of Theorem~\ref{thm:main}(B)}

\begin{lemma}\label{lem:1-mod-4}
Assume that each $p_i\equiv1\bmod 4$ and $\Sel_2(E/\BQ)\cong(\BZ/2\BZ)^2$.
Let $\bfd=(s_1,\cdots,s_k)^\rmT$ be a column vector in $\BF_2^k$ and $d=p_1^{s_1}\cdots p_k^{s_k}$.
\begin{enumerate}
\item $\dim_{\BF_2}\Sel_2'\bigl(E^{(n)}\bigr)=2$ if and only if $h_4(n)=1$.
In which case, $\rank\bfA=k-2$ or $k-1$.
\item If $h_4(n)=1$ and $\rank\bfA=k-2$, then $\Sel_2'\bigl(E^{(n)}\bigr)$ is generated by $(d,d,1)$ and $(-1,1,-1)$, where $\Ker\bfA=\set{{\bf0},{\bf1},\bfd,\bfd+{\bf1}}$.
Moreover, $d\equiv5\bmod8$.
\item If $h_4(n)=1$ and $\rank\bfA=k-1$, then $\Sel_2'\bigl(E^{(n)}\bigr)$ is generated by $(2d,2d,1)$ and $(-1,1,-1)$, where $\bfA\bfd=\bfb_2$.
\end{enumerate}
\end{lemma}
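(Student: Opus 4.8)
The plan is to collapse all three assertions into one computation of $\Ker\bfM_n$ over $\BF_2$ and then read everything off the R\'edei matrix. First I would reduce the Selmer group: since $\Sel_2(E/\BQ)\cong(\BZ/2\BZ)^2$ the pure Selmer group $\Sel_2'(E)$ is trivial, so $\Ker\CM_1={\bf0}$ by Lemma~\ref{lem:Sel-E}. As $\CM_n$ is block upper triangular with diagonal blocks $\bfM_n,\CM_1$, every kernel vector $\svecc{\bfx}{\bfy}{\bfz}$ has $\bfz={\bf0}$, whence $\bfG\bfz={\bf0}$, and Proposition~\ref{pro:Sel-E-n} gives $\Sel_2'\bigl(E^{(n)}\bigr)\simto\Ker\bfM_n$. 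Because each $p_i\equiv1\bmod4$ we have $\aleg{-1}{p_i}=0$, so $\bfb_{-1}={\bf0}$, $\bfD_{-1}={\bf0}$ and $\bfD_{-2}=\bfD_2$; the quadratic-reciprocity computation behind Lemma~\ref{lem:pm1-mod-8}(1) gives $\bfA^\rmT=\bfA+\bfD_{-1}+\bfb_{-1}\bfb_{-1}^\rmT$, which here reduces to $\bfA^\rmT=\bfA$. Hence $\bfM_n=\left(\begin{smallmatrix}\bfA+\bfD_2&\bfD_2\\\bfD_2&\bfA+\bfD_2\end{smallmatrix}\right)$ is symmetric and $\Im\bfA=(\Ker\bfA)^\perp$.

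Next I would analyze this kernel. Setting $\bfu=\bfx+\bfy$, summing the two block rows of $\bfM_n\svec{\bfx}{\bfy}={\bf0}$ gives $\bfA\bfu={\bf0}$, while the first block row gives $\bfA\bfx=\bfD_2\bfu$; so $\svec{\bfx}{\bfy}\mapsto\bfu$ yields a short exact sequence $0\to\Ker\bfA\to\Ker\bfM_n\to V\to0$ with $V=\{\bfu\in\Ker\bfA:\bfD_2\bfu\in\Im\bfA\}$, which by symmetry is the radical of the form $B(\bfu,\bfu')=\bfu^\rmT\bfD_2\bfu'$ on $\Ker\bfA$. Thus $\dim_{\BF_2}\Sel_2'\bigl(E^{(n)}\bigr)=\dim\Ker\bfA+\dim V$. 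On the arithmetic side $\bfR_n=(\bfA,\bfb_2)$ with $\bfb_2=\bfD_2{\bf1}$, so Proposition~\ref{pro:4rank} gives $h_4(n)=\dim\Ker\bfA-\epsilon$, where $\epsilon=0$ if $\bfb_2\in\Im\bfA$ and $\epsilon=1$ otherwise. The input $n\equiv1\bmod8$ forces $\bfb_2^\rmT{\bf1}=\aleg{2}{n}=0$, which gives simultaneously $B({\bf1},{\bf1})=0$ and, using $\Im\bfA=(\Ker\bfA)^\perp$, the implication $\dim\Ker\bfA=1\Rightarrow\bfb_2\in\Im\bfA$.

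Then I would run the case analysis. If $\dim\Ker\bfA=1$ then $\Ker\bfA=\{{\bf0},{\bf1}\}$, $\epsilon=0$, so $h_4(n)=1$; here $B\equiv0$ on the line, $V=\Ker\bfA$, and $\dim\Sel_2'=2$ (the case $\rank\bfA=k-1$). If $\dim\Ker\bfA=2$, write $\Ker\bfA=\{{\bf0},{\bf1},\bfd,\bfd+{\bf1}\}$ and compute the Gram matrix of $B$ in the basis $\{{\bf1},\bfd\}$: from $B({\bf1},{\bf1})=0$ and $B({\bf1},\bfd)=B(\bfd,\bfd)=\aleg{2}{d}$ it equals $\left(\begin{smallmatrix}0&\aleg{2}{d}\\\aleg{2}{d}&\aleg{2}{d}\end{smallmatrix}\right)$, of determinant $\aleg{2}{d}$. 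So $B$ is nondegenerate ($V={\bf0}$, $\dim\Sel_2'=2$) exactly when $\aleg{2}{d}=1$, which is the same as $\epsilon=1$, i.e.\ $h_4(n)=1$ (the case $\rank\bfA=k-2$); when $\aleg{2}{d}=0$ one gets $\epsilon=0$, $h_4(n)=2$ and $\dim\Sel_2'\ge3$. Finally $\dim\Ker\bfA\ge3$ gives $h_4(n)=\dim\Ker\bfA-\epsilon\ge2$ and $\dim\Sel_2'\ge3$. This proves (1) and that $h_4(n)=1$ forces $\rank\bfA\in\{k-1,k-2\}$.

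For the generators I would translate kernel vectors back via $(\bfx,\bfy,{\bf0})\mapsto(d_1,d_2,d_3)=\bigl(\prod p_i^{x_i},\prod p_i^{y_i},\prod p_i^{x_i+y_i}\bigr)$. When $\rank\bfA=k-2$, $V={\bf0}$ so $\Ker\bfM_n=\{(\bfx,\bfx):\bfx\in\Ker\bfA\}$, giving the classes $(n,n,1)$ and $(d,d,1)$; since $(n,n,1)$ differs from $(-1,1,-1)$ by the torsion class $(-n,n,-1)$ of \eqref{eq:torsion-homogeneous}, $\Sel_2'$ is generated by $(d,d,1)$ and $(-1,1,-1)$, and $\aleg{2}{d}=1$ with $d\equiv1\bmod4$ forces $d\equiv5\bmod8$. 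When $\rank\bfA=k-1$, choose $\bfd$ with $\bfA\bfd=\bfb_2$; the nontrivial lift of ${\bf1}\in V$ is $(\bfd,\bfd+{\bf1})$, giving $\Lambda=(d,n/d,n)$, which multiplied by the torsion class $(2,2n,n)$ becomes $(2d,2d,1)$, so $\Sel_2'$ is generated by $(2d,2d,1)$ and $(-1,1,-1)$. The hard part is the $\dim\Ker\bfA=2$ step, namely making the radical of $B$ computable: the clean determinant $\aleg{2}{d}$ hinges on $B({\bf1},{\bf1})=\aleg{2}{n}=0$, which is exactly where $n\equiv1\bmod8$ enters, together with the symmetry $\bfA^\rmT=\bfA$. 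Verifying these two reciprocity inputs and confirming that no borderline case escapes the dichotomy is the delicate bookkeeping.
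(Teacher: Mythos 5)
Your proof is correct and follows essentially the same route as the paper: reduce to $\Ker\bfM_n$ via $\Ker\CM_1={\bf0}$, exploit the symmetry of $\bfA$ together with the relations $\bfA(\bfx+\bfy)={\bf0}$ and $\bfA\bfx=\bfD_2(\bfx+\bfy)$, compare with $\bfR_n=(\bfA,\bfb_2)$ through Proposition~\ref{pro:4rank} using ${\bf1}^\rmT\bfb_2=0$, and adjust the generators by the torsion classes of \eqref{eq:torsion-homogeneous}. Your packaging of the case analysis as the radical of the form $\bfu^\rmT\bfD_2\bfu'$ on $\Ker\bfA$, with Gram determinant $\aleg{2}{d}$, is a slightly tidier rendering of the paper's element-by-element verification that $\bfD_2{\bf1},\bfD_2\bfd,\bfD_2(\bfd+{\bf1})\notin\Im\bfA$, but the inputs and conclusions are identical.
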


\begin{proof}
Similar to the proof of Lemma~\ref{lem:pm1-mod-8}(2), we have $\Ker\CM_1=0$.
It suffices to show that $\rank\bfM_n=2k-2$ if and only if the R\'edei matrix $\bfR_n=(\bfA,\bfb_2)$ has rank $k-1$ by Proposition~\ref{pro:4rank}.
Since $\bfA{\bf1}={\bf0}$, we have $\rank\bfA\le k-1$.
If $\rank\bfM_n=2k-2$, then
\[2k-2=\rank\begin{pmatrix}
\bfA+\bfD_2&\bfD_2\\\bfD_2&\bfA+\bfD_2
\end{pmatrix}
=\rank\begin{pmatrix}
\bfA&\bfD_2\\&\bfA\end{pmatrix}\le k+\rank\bfA\]
and $\rank\bfA\ge k-2$.
If $\rank\bfR_n=k-1$, then clearly $\rank\bfA\ge k-2$.

Suppose that $\rank\bfA=k-2$.
If $\rank\bfM_n=2k-2$, then $\bfb_2\notin\Im\bfA$.
Otherwise assume that $\bfA\bfa=\bfb_2$, then
\[\Ker\bfM_n\supseteq\set{\svec{\bfu}{\bfu},\svec{\bfu+\bfa}{\bfu+\bfa+{\bf1}}:\bfu\in\Ker\bfA}\]
has at least $8$ elements, which is impossible.
Therefore, $\rank\bfR_n=\rank(\bfA,\bfb_2)=k-1$.
Conversely, if $\rank\bfR_n=k-1$, then $\bfb_2\notin \Im\bfA$.
Since $n\equiv1\bmod 8$, we have ${\bf1}^\rmT\bfb_2=0$.
Note that $\bfA$ is symmetric, we have
\[\Im\bfA=\set{\bfu:{\bf1}^\rmT\bfu=\bfd^\rmT\bfu=0},\]
$\bfd^\rmT\bfb_2=1$ and ${\bf1}^\rmT\bfD_2(\bfd+{\bf1})={\bf1}^\rmT\bfD_2\bfd=\bfb_2^\rmT\bfd=1$.
Hence $\bfD_2{\bf1},\bfD_2\bfd,\bfD_2(\bfd+{\bf1})\notin\Im\bfA$.
If $\svec{\bfx}{\bfy}\in\Ker\bfM_n$, then $\bfx+\bfy\in\Ker\bfA$ and $\bfD_2(\bfx+\bfy)=\bfA\bfx$.
This forces $\bfx+\bfy={\bf0}$ and $\bfx=\bfy\in\Ker\bfA$.
Hence $\#\Ker\bfM_n=\#\Ker\bfA=4$ and $\rank\bfM_n=2k-2$.
In this case,
\[\Ker\CM_n=\set{
\svecc{\bf0}{\bf0}{\bf0},
\svecc{\bf1}{\bf1}{\bf0},
\svecc{\bfd}{\bfd}{\bf0},
\svecc{\bfd+\bf1}{\bfd+\bf1}{\bf0}}.\]
In other words, $\Sel_2'\bigl(E^{(n)}\bigr)$ is generated by $(n,n,1)$ and $(d,d,1)$.
Since $\bfd^\rmT\bfb_2=1$, we have $\leg{2}{d}=1$ and $d\equiv 5\bmod8$.

Suppose that $\rank\bfA=k-1$.
Then $\Ker\bfA=\set{{\bf0},{\bf1}}$ and $\Im\bfA=\set{\bfu:{\bf1}^\rmT\bfu=0}$.
Since $n\equiv1\bmod 8$, we have ${\bf1}^\rmT\bfb_2=0$ and $\bfb_2\in\Im\bfA$.
Thus $\rank\bfR_n=k-1$, $h_4(n)=1$ and
\[\Ker\CM_n=\set{
\svecc{\bf0}{\bf0}{\bf0},
\svecc{\bf1}{\bf1}{\bf0},
\svecc{\bfd}{\bfd+\bf1}{\bf0},
\svecc{\bfd+\bf1}{\bfd}{\bf0}}.\]
In this case, $\Sel_2'\bigl(E^{(n)}\bigr)$ is generated by $(n,n,1)$ and $(d,nd,n)$.

Conclude the result by the fact that $(n,n,1)-(-1,1,-1)=(-n,n,-1)$ and $(d,nd,n)-(2d,2d,1)=(2,2n,n)$ correspond torsions, see~\eqref{eq:torsion-homogeneous}.
\end{proof}

\begin{theorem}\label{thm:main_B}
Assume that $\Sel_2(E/\BQ)\cong(\BZ/2\BZ)^2$.
Let $n$ be a positive square-free integer prime to $abc$ where each prime factor of $n$ is a quadratic residue modulo every prime factor of $abc$.
If all prime factors of $n\equiv1\bmod8$ are congruent to $1$ modulo $4$, then the following are equivalent:
\begin{enumerate}
\item $\rank_\BZ E^{(n)}(\BQ)=0$ and $\Sha(E^{(n)}/\BQ)[2^\infty]\cong(\BZ/2\BZ)^2$;
\item $h_4(n)=1$ and $h_8(n)\equiv\frac{d-1}4\bmod2$.
\end{enumerate}
Here $d$ is the odd part of $d_0\mid 2n$ such that the ideal class $[(d_0,\sqrt{-n})]$ is the non-trivial element in $\CA[2]\cap\CA^2$.
\end{theorem}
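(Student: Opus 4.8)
The plan is to follow the strategy of Theorem~\ref{thm:main_A}, replacing the pair of generators by the ones supplied in Lemma~\ref{lem:1-mod-4}. By Lemma~\ref{lem:non-deg}, statement~(1) is equivalent to saying that $\Sel_2'\bigl(E^{(n)}\bigr)$ is two-dimensional over $\BF_2$ and that the Cassels pairing on it is non-degenerate; by Lemma~\ref{lem:1-mod-4}(1) the first condition holds exactly when $h_4(n)=1$. So I would assume $h_4(n)=1$ throughout and reduce to showing that the pairing is non-degenerate if and only if $h_8(n)\equiv\frac{d-1}4\bmod2$. Since the space is two-dimensional, non-degeneracy amounts to $\pair{\Lambda,\Lambda'}=1$ for the two explicit generators of Lemma~\ref{lem:1-mod-4}(2)--(3), so everything collapses to a single pairing computation in each of the two cases $\rank\bfA=k-2$ and $\rank\bfA=k-1$.

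In both cases I would take $\Lambda=(-1,1,-1)$, write down $D_\Lambda$ from \eqref{eq:D-Lambda}, and choose rational points $Q_i\in H_i(\BQ)$ together with the tangent forms $L_i$: for $H_2\colon u_1^2-u_3^2=a^2nt^2$ the split conic has the obvious point $(t,u_1,u_3)=(0,1,1)$, while for $H_1\colon u_2^2+u_3^2=b^2nt^2$ and $H_3\colon u_1^2+u_2^2=2c^2nt^2$ one uses that $n$ (hence $b^2n$ and $2c^2n$) is a sum of two squares. Pairing against the remaining generator, whose third coordinate is $1$ and whose first two coordinates carry the odd square-free $d$ (times $2$ when $\rank\bfA=k-1$), gives $\pair{\Lambda,\Lambda'}_v=\bigl[L_1L_2(P_v),d\bigr]_v$ up to that factor $2$. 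Lemma~\ref{lem:cassels}, together with the hypothesis that each $p_i$ is a quadratic residue modulo every prime factor of $abc$, kills all contributions from $q\mid abc$ and collapses the global sum to the places dividing $2n$:
\[\pair{\Lambda,\Lambda'}=\sum_{p\mid n}\pair{\Lambda,\Lambda'}_p+\pair{\Lambda,\Lambda'}_2.\]

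To evaluate the local terms I would attach to the nontrivial class $[(d_0,\sqrt{-n})]$ of $\CA[2]\cap\CA^2$ the primitive triple $(\alpha,\beta,\gamma)$ of Proposition~\ref{pro:8rank}, solving $d\alpha^2+\frac nd\beta^2=\gamma^2$ when $\rank\bfA=k-2$ (so $d_0=d$) and $d\alpha^2+\frac nd\beta^2=2\gamma^2$ when $\rank\bfA=k-1$ (so $d_0=2d$); in either case the odd part of $d_0$ is $d$, and $(d\alpha)^2\equiv-n\beta^2\bmod\gamma$ gives $\leg{-n}\gamma=1$, whence $\leg{-1}\gamma=\leg\gamma n$. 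Choosing the local point $P_p$ at each $p\mid n$ from this relation, I expect $\sum_{p\mid n}\pair{\Lambda,\Lambda'}_p$ to assemble into the single symbol $\aleg\gamma n=\bfb_\gamma^\rmT{\bf1}$. Here the structural input is that $\bigl(\Im\bfR_n\bigr)^\perp=\Ker\bfA\cap\bfb_2^\perp=\langle{\bf1}\rangle$: indeed $\bfA{\bf1}={\bf0}$ and ${\bf1}^\rmT\bfb_2=\aleg2n=0$ since $n\equiv1\bmod8$, while $\rank\bfR_n=k-1$ forces this complement to be one-dimensional. Thus $\aleg\gamma n=0$ precisely when $\bfb_\gamma\in\Im\bfR_n$. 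The $2$-adic term is the genuinely new ingredient: whereas in Theorem~\ref{thm:main_A} the hypothesis $p_i\equiv\pm1\bmod8$ forced $[2,d]_2=0$, here $d\equiv1\bmod4$ only, so that $[-1,d]_2=0$ but $[2,d]_2=\frac{d^2-1}8\equiv\frac{d-1}4\bmod2$, and I expect $\pair{\Lambda,\Lambda'}_2$ to reduce to exactly this quantity.

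Putting the pieces together, $\pair{\Lambda,\Lambda'}\equiv\aleg\gamma n+\frac{d-1}4\bmod2$. By Proposition~\ref{pro:8rank} the condition $\bfb_\gamma\in\Im\bfR_n$ is equivalent to $[(d_0,\sqrt{-n})]\in\CA^4$; and since $h_4(n)=1$ forces $\CA[2]\cap\CA^2$ to be cyclic of order two, its nontrivial generator $[(d_0,\sqrt{-n})]$ lies in $\CA^4$ exactly when $h_8(n)=1$. Hence $\aleg\gamma n=0\iff h_8(n)=1$, i.e.\ $\aleg\gamma n\equiv1+h_8(n)$, so $\pair{\Lambda,\Lambda'}=1$ if and only if $h_8(n)\equiv\frac{d-1}4\bmod2$, which is the assertion. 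I expect the main obstacle to be the $2$-adic Hilbert symbol computation: arranging $P_2\in D_\Lambda(\BQ_2)$ and the tangent forms so that $\pair{\Lambda,\Lambda'}_2$ collapses cleanly to $\frac{d-1}4$ with no stray constant, and in particular tracking the harmless-looking factor $2$ in the case $d_0=2d$, which feeds into the $2$-adic side and must be shown to produce the same criterion uniformly across both cases $\rank\bfA=k-2$ and $\rank\bfA=k-1$.
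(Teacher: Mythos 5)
Your skeleton matches the paper's: reduce via Lemma~\ref{lem:non-deg} and Lemma~\ref{lem:1-mod-4} to a single Cassels pairing value between the two generators, and convert $\aleg{\gamma}{n}$ into the $8$-rank via Proposition~\ref{pro:8rank} using $\Im\bfR_n=\set{\bfu:{\bf1}^\rmT\bfu=0}$; your final formula $\pair{\Lambda,\Lambda'}=\aleg{\gamma}{n}+\aleg 2d$ is also the one the paper obtains. But the heart of the proof --- the actual evaluation of the pairing --- is where your proposal has a genuine gap, and the specific choice you make works against you. You compute on $D_{(-1,1,-1)}$, whose conics $u_2^2+u_3^2=b^2nt^2$, $u_1^2-u_3^2=a^2nt^2$, $u_1^2+u_2^2=2c^2nt^2$ acquire rational points from a representation $n=A^2+B^2$; the tangent forms $L_i$ therefore involve $A,B$ and nothing else. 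The quantity $\gamma$ from the norm equation $d\alpha^2+d'\beta^2=\gamma^2$ (resp.\ $2\gamma^2$) never enters your setup, so the claim that $\sum_{p\mid n}\pair{\Lambda,\Lambda'}_p$ ``assembles into $\aleg{\gamma}{n}$'' is unsupported: since $d$ is not coprime to $n$, those local terms do not vanish, and what they actually produce are symbols involving $A+BI$ at the primes dividing $d$, i.e.\ quartic-residue expressions of Jung--Yue type; identifying those with $\aleg{\gamma}{n}$ would amount to an independent, nontrivial comparison of two different $8$-rank criteria. Likewise the assertion that $\pair{\Lambda,\Lambda'}_2$ ``reduces to $[2,d]_2$'' is stated as an expectation, not derived.

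The paper avoids all of this by computing on the other generator's homogeneous space, $D_{(d,d,1)}$ (resp.\ $D_{(2d,2d,1)}$), paired against $\Lambda'=(-1,1,-1)$. Two things then come for free: first, every local term $[\,\cdot\,,-1]_v$ at $v=p\mid n$ and $v=q\mid c$ vanishes simply because those primes are congruent to $1$ modulo $4$; second, the conic $H_1:-b^2nt^2+du_2^2-u_3^2=0$ has the explicit rational point $Q_1=(\beta,b\gamma,bd\alpha)$ coming straight from the norm equation, so $\gamma$ enters through the tangent form $L_1$, and the entire pairing concentrates at $v=2$ (after routine vanishing checks at $\infty$ and $q\mid ab$), where a careful choice of $P_2$ yields exactly $\aleg{-1}{\gamma}+1$ in the first case and $\aleg{-1}{\gamma}+\aleg 2d$ in the second, whence $\aleg\gamma n+\aleg 2d$ via $\leg{-1}{\gamma}=\leg{\gamma}{n}$. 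To salvage your route you would have to carry out the $p\mid n$ and $2$-adic local computations on $D_{(-1,1,-1)}$ in full and prove the resulting quartic-symbol identity; as written, the decisive step is missing.
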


\begin{proof}
By Lemma~\ref{lem:non-deg}, (1) is equivalent to say, $\Sel_2'\bigl(E^{(n)}\bigr)$ has dimension $2$ and the Cassels pairing on it is non-degenerate.
By Lemma~\ref{lem:1-mod-4}(1), $\dim_{\BF_2}\Sel_2'\bigl(E^{(n)}\bigr)=2$ if and only if $h_4(n)=1$.
Assume that $h_4(n)=1$.

(1) The case $\rank\bfA=k-2$.
By Lemma~\ref{lem:1-mod-4}(2) and Proposition~\ref{pro:4rank}, we have $\bfb_2\notin\Im\bfA$ and $\CD(K)\cap\bfN_{K/\BQ}K^\times=\set{1,n,d,n/d}$ with $d=d_0\equiv 5\bmod8$.
Denote by $d'=n/d\equiv 5\bmod8$.
Since $d$ is a norm, there exists a primitive triple $(\alpha,\beta,\gamma)$ of positive integers such that
\[d\alpha^2+d'\beta^2=\gamma^2.\]
If $\alpha$ is odd, then $\beta$ is even and the triple
\[(\alpha',\beta',\gamma')=\biggl(\Bigl|\frac{(d-d')\alpha}2+d'\beta\Bigr|,\Bigl|\frac{(d-d')\beta}2-d\alpha\Bigr|,\frac{(d+d')\gamma}{2}\biggr)\]
is another primitive solution with even $\alpha'$.
Thus we may assume that $\alpha$ is even.
Then all of $\alpha/2,\beta,\gamma$ are odd since $d'\equiv 5\bmod8$.

By Lemma~\ref{lem:1-mod-4}(2), $\Sel_2'\bigl(E^{(n)}\bigr)$ is generated by $\Lambda=(d,d,1)$ and $\Lambda'=(-1,1,-1)$.
Recall that $D_\Lambda$ is
\[\begin{cases}
	H_1:& -b^2nt^2+du_2^2-u_3^2=0,\\
	H_2:& -a^2nt^2+u_3^2-du_1^2=0,\\
	H_3:&2c^2d't^2+u_1^2-u_2^2=0.
\end{cases}\]
Choose
\[\begin{aligned}
Q_1&=(\beta,b\gamma,bd\alpha)\in H_1(\BQ),&
L_1&=bd'\beta t-\gamma u_2+\alpha u_3,\\
Q_3&=(0,1,1)\in H_3(\BQ),&
L_3&=u_1-u_2.
\end{aligned}\]
By Lemma~\ref{lem:cassels}, we have
\[\pair{\Lambda,\Lambda'}=\sum_{p\mid 2nabc\infty} \bigl[L_1L_3(P_p),-1\bigr]_p\]
for any $P_p\in D_\Lambda(\BQ_p)$.
For each $p\mid n$, we have $p\equiv1\bmod4$ and then $\pair{\Lambda,\Lambda'}_p=0$.
Since for any $q\mid c$, we have $-a^2=b^2-2c^2\equiv b^2\bmod q$, we have $q\equiv1\bmod4$ and then $\pair{\Lambda,\Lambda'}_q=0$.

Take $P_\infty=(t,u_1,u_2,u_3)=(0,1,-1,\sqrt{d})$, then
\[L_1L_3(P_\infty)=2(\gamma+\alpha\sqrt d)>0\]
and
\[\pair{\Lambda,\Lambda'}_\infty=\bigl[L_1L_3(P_\infty),-1\bigr]_\infty=0.\]

Take $P_2=(2,\sqrt{1-8c^2d'},1,\sqrt{d-4b^2n})$ where $u_1\equiv3\bmod8$.
Note that $bd'\beta+\alpha u_3/2$ is even.
We have
\[L_1L_3(P_2)=(u_1-1)(2bd'\beta+\alpha u_3-\gamma)\]
and
\[\pair{\Lambda,\Lambda'}_2=\bigl[L_1L_3(P_2),-1\bigr]_2=[2,-1]_2+[-\gamma,-1]_2=\aleg{-1}\gamma+1.\]
Since $d\alpha^2\equiv-d'\beta^2\bmod \gamma$, we have $\leg{-1}\gamma=\leg n\gamma=\leg\gamma n$ and $\pair{\Lambda,\Lambda'}_2=\aleg\gamma n+1$.

For $q\mid ab$, take $P_q=(0,1,-1,\sqrt{d})$.
Since $\gamma^2-d\alpha^2=d'\beta^2$, we may choose $\sqrt{d}$ such that $q\mid(\gamma-\alpha\sqrt{d})$ if $q\mid \beta$.
Then
\[L_1L_3(P_q)=2(\gamma+\alpha\sqrt{d})\in\BZ_q^\times\]
and
\[\pair{\Lambda,\Lambda'}_q=\bigl[L_1L_3(P_q),-1\bigr]_q=0.\]
Hence
\[\pair{\Lambda,\Lambda'}=\pair{\Lambda,\Lambda'}_2=\aleg\gamma n+1.\]

Since $\bfR_n=(\bfA,\bfb_2)$, we have $\CA[2]\cap\CA^2=\set{[(1)],[(d,\sqrt{-n})]}$.
Since $\bfb_2\notin\Im\bfA$ and $\bfA{\bf1}={\bf0}$, we have
\[\Im\bfR_n=\set{\bfu:{\bf1}^\rmT\bfu=0}.\]
By Lemma~\ref{pro:8rank}, $[(d,\sqrt{-n})]\in \CA^4$ if and only if
\[\bfb_\gamma=\Bigl(\aleg\gamma{p_1},\dots,\aleg\gamma{p_k}\Bigr)^\rmT\in\Im\bfR_n,\]
if and only if
\[\pair{\Lambda,\Lambda'}=\aleg\gamma n+1={\bf1}^\rmT\bfb_\gamma+1=1.\]
In conclusion, the Cassels pairing is non-degenerate if and only if $h_8(n)=1=\aleg{2}{d}$.

(2) The case $\rank\bfA=k-1$.
By Lemma~\ref{lem:1-mod-4}(3) and Proposition~\ref{pro:4rank}, we have $\bfb_2\in\Im\bfA$ and $\CD(K)\cap\bfN_{K/\BQ}K^\times=\set{1,n,2d,2n/d}$.
Denote by $d'=n/d$.
Since $d_0=2d$ is a norm, there exists a primitive triple $(\alpha,\beta,\gamma)$ of positive integers such that
\[d\alpha^2+d'\beta^2=2\gamma^2.\]
It's easy to see that all of $\alpha,\beta,\gamma$ are odd.

By Lemma~\ref{lem:1-mod-4}(3), $\Sel_2'\bigl(E^{(n)}\bigr)$ is generated by
$\Lambda=(2d,2d,1)$ and $\Lambda'=(-1,1,-1)$.
Recall that $D_\Lambda$ is
\[\begin{cases}
	H_1:& -b^2nt^2+2du_2^2-u_3^2=0,\\
	H_2:& -a^2nt^2+u_3^2-2du_1^2=0,\\
	H_3:&c^2d't^2+u_1^2-u_2^2=0.
\end{cases}\]
Choose
\[\begin{aligned}
Q_1&=(\beta,b\gamma,bd\alpha)\in H_1(\BQ),&
L_1&=bd'\beta t-2\gamma u_2+\alpha u_3,\\
Q_3&=(0,1,1)\in H_3(\BQ),&
L_3&=u_1-u_2.
\end{aligned}\]
Similar to the case $\rank\bfA=k-2$, we have
\[\pair{\Lambda,\Lambda'}=\sum_{p\mid 2ab\infty} \bigl[L_1L_3(P_p),-1\bigr]_p\]
for any $P_p\in D_\Lambda(\BQ_p)$.

For $p=\infty$, take $P_\infty=(0,1,-1,\sqrt{2d})$.
Then
\[L_1L_3(P_\infty)=2(2\gamma+\alpha \sqrt{2d})>0\]
and
\[\pair{\Lambda,\Lambda'}_\infty=\bigl[L_1L_3(P_\infty),-1\bigr]_\infty=0.\]

For $p=2$, take $P_2=(t,u_1,u_2,u_3)$ where
\[t=1,\ u_1=2\aleg2d,\ u_2^2=c^2d'+u_1^2,\ u_3^2=a^2n+2du_1^2\]
with $\gamma u_2\equiv 1\bmod 4$.
Since
\[\begin{split}
&(bd'\beta+\alpha u_3)(bd'\beta-\alpha u_3)
=b^2{d'}^2\beta^2-\alpha^2(a^2n+2du_1^2)\\
=&b^2d'(2\gamma^2-d\alpha^2)-\alpha^2(a^2n+2du_1^2)
=2b^2d'\gamma^2-\alpha^2(2c^2n+2du_1^2)\\
=&2\bigl((bd'\gamma)^2-n\alpha^2u_2^2\bigr)/d'\equiv0\bmod{16},
\end{split}\]
we may choose $u_3$ such that $8\mid bd'\beta+\alpha u_3$.
Then
\[\begin{split}
\pair{\Lambda,\Lambda'}_2=&\bigl[L_1L_3(P_2),-1\bigr]_2
=\bigl[(u_1-u_2)(bd'\beta+\alpha u_3-2\gamma u_2),-1\bigr]_2\\
=&\bigl[-2\gamma u_2(u_1-u_2),-1\bigr]_2
=[2,-1]_2+\bigl[u_2-u_1,-1\bigr]_2\\
=&[\gamma,-1]_2+[1-u_1\gamma,-1]_2\\
=&\aleg{-1}\gamma+\Bigl[1-2\aleg2d,-1\Bigr]_2=\aleg{-1}\gamma+\aleg2d.
\end{split}\]
Since $d\alpha^2\equiv-d'\beta^2\bmod \gamma$, we have $\leg{-1}\gamma=\leg n\gamma=\leg\gamma n$ and $\pair{\Lambda,\Lambda'}_2=\aleg\gamma n+\aleg2d$.

For $q\mid a$, take $P_q=(1,0,u_2,a\sqrt n)$ where $u_2^2=c^2{d'}$.
Since
\[\begin{split}
&(bd'\beta-2\gamma u_2)(bd'\beta+2\gamma u_2)
=b^2{d'}^2\beta^2-4c^2d'\gamma^2\\
\equiv&2c^2d'(d'\beta^2-2\gamma^2)=-2c^2n\alpha^2\bmod q,
\end{split}\]
we may choose $u_2$ such that $q\mid bd'\beta+2\gamma u_2$ if $q\mid\alpha$.
If $q\mid bd'\beta\pm 2\gamma u_2$, then $q\mid \beta$, which contradicts to the primitivity of $(\alpha,\beta,\gamma)$.
Therefore, $q\nmid bd'\beta-2\gamma u_2$.
If $q\nmid\alpha$, clearly we have $q\nmid bd'\beta\pm2\gamma u_2$.
Then
\[L_1L_3(P_q)=-u_2(bd'\beta-2\gamma u_2+a\alpha\sqrt n)\in\BZ_q^\times\]
and
\[\pair{\Lambda,\Lambda'}_q=\bigl[L_1L_3(P_q),-1\bigr]_q=0.\]
Similarly, $\pair{\Lambda,\Lambda'}_q=0$ for $q\mid b$.
Hence
\[\pair{\Lambda,\Lambda'}=\pair{\Lambda,\Lambda'}_2=\aleg\gamma n+\aleg2d.\]

Since $\bfR_n=(\bfA,\bfb_2)$, we have $\CA[2]\cap\CA^2=\set{[(1)],[(2d,\sqrt{-n})]}$.
Since $\bfb_2\in\Im\bfA$, we have
\[\Im\bfR_n=\Im\bfA=\set{\bfu:{\bf1}^\rmT\bfu=0}.\]
By Lemma~\ref{pro:8rank}, $[(2d,\sqrt{-n})]\in \CA^4$ if and only if
\[\bfb_\gamma=\Bigl(\aleg\gamma{p_1},\dots,\aleg\gamma{p_k}\Bigr)^\rmT\in\Im\bfR_n,\]
if and only if
\[\pair{\Lambda,\Lambda'}=\aleg\gamma n+\aleg{2}{d}={\bf1}^\rmT\bfb_\gamma+\aleg{2}{d}=\aleg{2}{d}.\]
In conclusion, the Cassels pairing is non-degenerate if and only if $h_8(n)=\aleg{2}{d}$.
\end{proof}

\section{Equidistribution of residue symbols}

In this and next sections, we will prove Theorem~\ref{thm:dist}.

\subsection{Residue symbols}
\label{Residue symbols}

\begin{definition}
Denote by $I=\sqrt{-1}$ and $\BZ[I]$ the ring of Gauss integers.

(1) A prime element $\lambda$ of $\BZ[I]$ is called {\em Gaussian} if it is not a rational prime.

(2) An integer $\lambda\in\BZ[I]$ is called {\em primary} if $\lambda\equiv1\bmod{(2+2I)}$.
\end{definition}

Recall the quadratic and quartic residue symbols on $\BZ[I]$, see \cite[p.~196]{Hecke1981} and \cite{IrelandRosen1990}.
Denote by $\bfN=\bfN_{\BQ(I)/\BQ}$ the norm from $\BQ(I)$ to $\BQ$.
For any $\alpha\in\BZ[I]$ and prime element $\lambda$ prime to $1+I$, define
\begin{equation}\label{eq:quadratic-residue-symbol}
\leg\alpha\lambda_2\in\set{0,\pm1}\quad\text{such that}\quad \leg\alpha\lambda_2\equiv\alpha^{\frac{\bfN\lambda-1}2}\bmod\lambda.
\end{equation}
For any element $\lambda$ prime to $1+I$ with a prime decomposition $\lambda=\prod_{i=1}^k\lambda_k$, define $\leg\alpha\lambda_2=\prod_{i=1}^k\leg\alpha{\lambda_i}_2$.

For any $\alpha\in\BZ[I]$ and primary prime $\lambda$, define
\begin{equation}\label{eq:quartic-residue-symbol}
\leg\alpha\lambda_4\in\set{0,\pm1,\pm I}\quad\text{such that}\quad \leg\alpha\lambda_4\equiv\alpha^{\frac{\bfN\lambda-1}4}\bmod\lambda.
\end{equation}
For any primary element $\lambda$ with a primary prime decomposition $\lambda=\prod_{i=1}^k\lambda_k$, define $\leg\alpha\lambda_4=\prod_{i=1}^k\leg\alpha{\lambda_i}_4$.
Let $\lambda$ and $\lambda'$ be two coprime primary primes.
Then we have the quartic reciprocity law
\[\leg\lambda{\lambda'}_4=\leg{\lambda'}\lambda_4 (-1)^{\frac{\bfN\lambda-1}4\cdot\frac{\bfN\lambda'-1}4}.\]
Certainly, $\leg\alpha\lambda_2=\leg\alpha\lambda_4^2$.

Let $p\equiv1\bmod4$ be a rational prime.
Let $a$ be a rational integer such that $\leg ap=1$.
By abuse of notations, we define
\begin{equation}\label{eq:rational-quartic-residue-symbol}
\leg ap_4:=\leg a\lambda_4,
\end{equation}
where $\lambda$ is a primary prime such that $\bfN\lambda=p$.
For any rational integer $d= p_1\cdots p_k$ with $p_i\equiv1\bmod4$, define $\leg ad_4=\prod_{i=1}^k\leg a{p_i}_4$.

\subsection{Analytic results}
\label{Analytic results}

Let $F$ be a number field with degree $n$, discriminant $\Delta$ and ring of integers $\CO$.
Denote by $\bfN=\bfN_{F/\BQ}$ the norm from $F$ to $\BQ$.

For an ideal $\ff$ of $\CO$, denote by $I(\ff)$ the group of fractional ideals prime to $\ff$ and $P_\ff$ the subgroup consisting of principal fractional ideals $(\gamma)=\gamma\CO$ with totally real $\gamma\equiv1\bmod\ff$.
A character $\chi$ of $I(\ff)/P_\ff$ is called a {\em character modulo $\ff$}.
It can be viewed as a character on $I(\ff)$.
If $\fa$ is a fractional ideal not coprime to $\ff$, define $\chi(\fa)=0$.
Denote by
\begin{equation}\label{eq:Mangoldt-function}
\Lambda(\fa)=\begin{cases}
\log\bfN\fp& \text{if $\fa=\fp^m$ with $m\ge 1$};\\
0 & \text{otherwise}
\end{cases}
\end{equation}
the {\em Mangoldt function}.
Define
\begin{equation}\label{eq:psi-x-chi}
\psi(x,\chi)=\sum_{\bfN\fa\le x}\chi(\fa)\Lambda(\fa).
\end{equation}
Denote by $\chi_0$ the principal character on $I(\ff)/P_\ff$.

\begin{proposition}[{\cite[p.~112, Exercise~7]{IwaniecKowalski2004}}]
\label{pro:psi-x-chi-formula}
If $\chi\neq\chi_0$ is a character modulo $\ff$ and $1\le T\le x$, then
\[\psi(x,\chi)=-\sum_{|\Im\rho|\le T}\frac{x^\rho-1}\rho+
O\bigl(T^{-1}x\log x \log(x^n\bfN\ff)\bigr).\]
Here $\rho$ runs over all the zeros of $L(s,\chi)$ with $0\le \Re\rho\le1$.
\end{proposition}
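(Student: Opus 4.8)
The plan is to obtain the formula by contour integration of the logarithmic derivative $-L'/L(s,\chi)$ against the kernel $(x^s-1)/s$, in the classical Riemann--von Mangoldt manner adapted to Hecke $L$-functions. From the Euler product of $L(s,\chi)$ and the definition \eqref{eq:Mangoldt-function} of $\Lambda$ one has $-\frac{L'}{L}(s,\chi)=\sum_\fa\chi(\fa)\Lambda(\fa)\bfN\fa^{-s}$ for $\Re s>1$. The point of the kernel $(x^s-1)/s$ is that it is holomorphic at $s=0$, so it produces no term $-L'/L(0,\chi)$. First I would apply the truncated Perron formula twice, to $x/\bfN\fa$ and to $1/\bfN\fa$ (using $\mathbf 1_{\bfN\fa<1}=0$ since $\Lambda$ is supported on ideals with $\bfN\fa\ge2$): for $\Re s=c>1$,
\[
\frac{1}{2\pi i}\int_{c-iT}^{c+iT}\frac{x^s-1}{s}\,\bfN\fa^{-s}\,ds
=\mathbf 1_{\bfN\fa<x}
+O\!\left(\Big(\tfrac{x}{\bfN\fa}\Big)^{c}\min\Big(1,\tfrac{1}{T|\log(x/\bfN\fa)|}\Big)
+\bfN\fa^{-c}\min\Big(1,\tfrac{1}{T\log\bfN\fa}\Big)\right).
\]
Summing against $\chi(\fa)\Lambda(\fa)$, taking $c=1+1/\log x$, and estimating the resulting error sum by $|\chi(\fa)\Lambda(\fa)|\le\Lambda(\fa)$ together with Landau's bound $\#\{\fa:\bfN\fa\le y\}\ll y$ (the terms with $\bfN\fa$ near $x$ handled in the usual way), I would obtain
\[
\psi(x,\chi)=\frac{1}{2\pi i}\int_{c-iT}^{c+iT}\Big(-\frac{L'}{L}(s,\chi)\Big)\frac{x^s-1}{s}\,ds
+O\!\left(\frac{x\log x\,\log(x^n\bfN\ff)}{T}\right).
\]

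Next I would shift the contour to the rectangle with vertices $\pm c\pm iT$. Before doing so I adjust $T$ by a bounded amount so that $|T-\Im\rho|\gg 1/\log(x^n\bfN\ff)$ for every zero $\rho$, which is possible by the zero-counting bound $\#\{\rho:|\Im\rho-t|\le1\}\ll\log(\bfN\ff(|t|+2)^n)$; this also makes the conditions $|\Im\rho|<T$ and $|\Im\rho|\le T$ coincide. Since $\chi\ne\chi_0$, $L(s,\chi)$ is entire, so the integrand is meromorphic with poles only at the zeros of $L(s,\chi)$, and at each such $\rho\ne0$ the residue of $-\frac{L'}{L}(s,\chi)\frac{x^s-1}{s}$ (counted with multiplicity) is exactly $-(x^\rho-1)/\rho$. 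The residue theorem then gives
\[
\frac{1}{2\pi i}\int_{c-iT}^{c+iT}\Big(-\frac{L'}{L}(s,\chi)\Big)\frac{x^s-1}{s}\,ds
=-\sum_{|\Im\rho|\le T}\frac{x^\rho-1}{\rho}+J_{\mathrm{top}}+J_{\mathrm{left}}+J_{\mathrm{bot}},
\]
where $J_{\mathrm{top}},J_{\mathrm{left}},J_{\mathrm{bot}}$ are the integrals of the same integrand over the other three sides (with the orientation of the loop). The finitely many trivial zeros with $-c<\Re s\le 0$ contribute residues of size $O(\log x\,\log(x^n\bfN\ff))$, which are absorbed into the error since $T\le x$.

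It remains to bound the three side integrals, and this is where the real work lies. On the horizontal sides I would use the Hadamard-product estimate $\frac{L'}{L}(s,\chi)=\sum_{|\Im\rho-t|\le1}(s-\rho)^{-1}+O(\log(\bfN\ff(|t|+2)^n))$ valid for $-c\le\Re s\le c$; with the choice of $T$ this gives $L'/L\ll\log^2(x^n\bfN\ff)$ there, and since $\int_{-c}^{c}x^\sigma\,d\sigma\ll x^c/\log x$ while $|s|\ge T$, the horizontal contribution stays within the claimed error. On the left side $\Re s=-c$ one has $|x^s-1|\ll1$, and the functional equation expresses $\frac{L'}{L}(-c+it,\chi)$ through $\frac{L'}{L}(1+c-it,\bar\chi)=O(1)$ and logarithmic derivatives of the gamma factors, which are $O(\log(\bfN\ff(|t|+2)^n))$; integrating $|s|^{-1}$ over $|t|\le T$ yields $\ll\log T\,\log(x^n\bfN\ff)$, again absorbed since $T\le x$. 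Collecting the estimates proves the proposition. The main obstacle is precisely this uniform control of $L'/L(s,\chi)$ across and to the left of the critical strip---the Hadamard-product bound on the horizontal segments and the functional-equation bound on the left segment, uniform in the degree $n$ and the conductor $\bfN\ff$---which rests on the functional equation, the Hadamard factorization of $L(s,\chi)$, and the zero-density estimate in unit boxes, the standard analytic inputs for Hecke $L$-functions.
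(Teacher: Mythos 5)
The paper offers no proof of this proposition: it is quoted directly from Iwaniec--Kowalski (p.~112, Exercise~7) and used as a black box, so there is no internal argument to measure yours against. Your sketch is the standard solution of that exercise and is essentially correct: truncated Perron with the kernel $(x^s-1)/s$ (which, as you note, is entire, so $s=0$ itself contributes nothing), a shift to $\Re s=-c$ after perturbing $T$ away from zero ordinates via the unit-box zero count, residues $-(x^\rho-1)/\rho$ at the zeros of $L(s,\chi)$, and the Hadamard-factorization and functional-equation bounds for $L'/L$ on the horizontal and left edges. Two details deserve a line each in a full write-up. First, zeros with $\Re\rho=0$ (including any trivial zeros at $s=0$ coming from the gamma factor) lie in the stated range $0\le\Re\rho\le1$, so they belong to the displayed sum rather than to your ``trivial zeros'' error term; since their total contribution is $O(n\log x)$, this is only bookkeeping. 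Second, your horizontal-edge estimate naturally produces $O\bigl(T^{-1}x\log^{2}(x^{n}\bfN\ff)/\log x\bigr)$, which is dominated by the claimed $O\bigl(T^{-1}x\log x\,\log(x^{n}\bfN\ff)\bigr)$ only when $\log\bfN\ff\ll\log^{2}x$; this is harmless here ($n$ is the fixed degree $2$, and in the paper's application $\fc_\eta$ has norm polynomial in $x$), but it should be said explicitly.
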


Similar to the classical process on the estimation of $\psi(x,\chi)$ as in \cite[\S~19]{Davenport1980}, we derive the following explicit formula
\begin{equation}\label{eq:explicit-formula-of-psi-x-chi}
\psi(x,\chi)=-\frac{x^{\beta'}}{\beta'}+ R(x,T)
\end{equation}
with
\[R(x,T)\ll x\log^2(x\bfN\ff)\exp\biggl(-\frac{c_1\log x}{\log(T\bfN\ff)}\biggr)+T^{-1}x\log x\cdot\log(x^n\bfN\ff)+x^{\frac14}\log x.\]
We also use the estimation on the number of zeroes in \cite[Satz~LXXI]{Landau1918}.
Here $c_1$ is a positive constant and the term $-\frac{x^{\beta'}}{\beta'}$ occurs only if $\chi$ is a real character such that $L(s,\chi)$ has a zero $\beta'$  satisfying
\[\beta'>1-\frac {c_2}{\log\bfN\ff}\]
with $c_2$  a positive constant.

The Siegel Theorem over $F$ as follows is \cite[Theorem]{Fogels1961} and \cite[Satz]{Fogels1963}.
\begin{proposition}\label{pro:Siegel-theorem}
Let $\chi$ be a character modulo an integral $\ff$ and $D=|\Delta|\bfN\ff>1$.
\begin{enumerate}
\item There is a positive constant $c_3=c_3(n)$ such that in the region
\[\Re(s)> 1-\frac{c_3}{\log D(2+|\Im s|)}>\frac34\]
there is no zero of $L(s,\chi)$ in the case of a complex $\chi$. For at most one real $\chi'$, there may be a simple zero $\beta'$ of $L(s,\chi')$ in this region.
\item For any $\varepsilon>0$, there exists a positive constant $c_4=c_4(n,\varepsilon)$ such that
\[1-\beta'> c_4(n,\varepsilon) D^{-\varepsilon}.\]
\end{enumerate}
\end{proposition}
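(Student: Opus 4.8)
The plan is to prove this as the classical de la Vallée Poussin zero-free region together with Siegel's ineffective lower bound, both transported to the Hecke $L$-functions $L(s,\chi)$ over $F$; the entire difficulty is to keep every implied constant depending only on the degree $n$ and to let the conductor enter only through $D=|\Delta|\bfN\ff$.

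For part (1), first I would record the non-negativity identity $3+4\cos\theta+\cos2\theta=2(1+\cos\theta)^2\ge0$. Applied to the logarithmic derivative $-\frac{L'}{L}(s,\chi)=\sum_\fa\Lambda(\fa)\chi(\fa)\bfN\fa^{-s}$ with the choice $\theta=\arg\bigl(\chi(\fa)\bfN\fa^{-it}\bigr)$, this yields, for $\sigma>1$,
\[
-3\frac{\zeta_F'}{\zeta_F}(\sigma)-4\,\Re\frac{L'}{L}(\sigma+it,\chi)-\Re\frac{L'}{L}(\sigma+2it,\chi^2)\ge0.
\]
Next I would bound each logarithmic derivative by means of the Hadamard factorization of the completed $L$-function: the Dedekind term contributes the pole $-\frac{\zeta_F'}{\zeta_F}(\sigma)\le\frac1{\sigma-1}+O(\log|\Delta|)$, a hypothetical zero $\rho=\beta+it$ of $L(s,\chi)$ contributes $-\Re\frac{L'}{L}(\sigma+it,\chi)\le-\frac1{\sigma-\beta}+O(\log D(2+|t|))$ after discarding the other (non-negative) zero terms, and the archimedean gamma factors add $O(n\log(2+|t|))$, where the count of zeros in a unit box is $O(\log D(2+|t|))$ by the estimate in \cite{Landau1918}. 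Feeding these in and optimizing $\sigma-1\asymp1/\log D(2+|t|)$ gives $\beta\le1-c_3/\log D(2+|t|)$ for complex $\chi$. For real $\chi$ the character $\chi^2=\chi_0$ is principal, so the last term acquires the pole of $\zeta_F$ near $t=0$, which destroys the inequality and leaves open a single real zero $\beta'$, the possible Siegel zero.

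For part (2), I would run Siegel's argument in the field setting, which is the hard and intrinsically ineffective part. Given two distinct real characters $\chi_1,\chi_2$ modulo $\ff_1,\ff_2$, I would study
\[
\lambda(s)=\zeta_F(s)\,L(s,\chi_1)\,L(s,\chi_2)\,L(s,\chi_1\chi_2),
\]
whose Dirichlet coefficients are non-negative (up to finitely many Euler factors it is the Dedekind zeta function of the abelian extension of $F$ cut out by $\chi_1$ and $\chi_2$) and which has a simple pole at $s=1$. Comparing the Taylor expansion of $\lambda$ at a real zero $\beta'$ of $L(s,\chi_2)$ with upper bounds for the residue of $\zeta_F$ at $s=1$ and for $|L(\tfrac12+it,\chi_j)|$, all expressed in terms of $D$, produces the usual dichotomy: either no real character has a zero beyond the region of part (1), or the existence of an exceptional $\chi_1$ forces $1-\beta'\gg_\varepsilon D^{-\varepsilon}$. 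The constant $c_4(n,\varepsilon)$ is non-computable precisely because one cannot decide which alternative holds.

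The main obstacle is part (2): beyond the bookkeeping, one must check that the residue of $\zeta_F$ at $s=1$ and the convexity bounds for $L(\tfrac12+it,\chi)$ are uniform in $D$ with constants depending only on $n$, so that Siegel's comparison yields a bound in $D^{-\varepsilon}$ rather than in the separate conductors $\bfN\ff_1,\bfN\ff_2$. This uniformity across all number fields of fixed degree is exactly what is established by Fogels, so in practice I would follow \cite{Fogels1961,Fogels1963} for the delicate uniform estimates while supplying the zero-free region of part (1) by the elementary argument above.
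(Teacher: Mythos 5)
The paper offers no proof of this proposition at all: it is stated as a quotation of the Siegel theorem over $F$ from \cite{Fogels1961} and \cite{Fogels1963}, so there is no internal argument to measure yours against. Your sketch follows the classical route --- the de la Vall\'ee Poussin device $3+4\cos\theta+\cos2\theta\ge0$ combined with the Hadamard factorization and Landau's zero count for part (1), and Siegel's comparison of the non-negative-coefficient product $\zeta_F(s)L(s,\chi_1)L(s,\chi_2)L(s,\chi_1\chi_2)$ for part (2) --- and it correctly isolates the real issue, namely uniformity of all constants in $D=|\Delta|\bfN\ff$ with dependence only on the degree $n$. Since that uniformity is precisely the content of Fogels' papers and you defer to them for it, your proposal and the paper ultimately rest on the same source; what you add is a serviceable outline of what those papers actually do.

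There is, however, a gap in part (1) as you present it. The $3$--$4$--$1$ positivity argument only shows that a zero violating the region must be attached to a real character, and even that requires care: for real $\chi$ the term at $\chi^2=\chi_0$ carries the pole of $\zeta_F$, so to exclude \emph{complex} zeros $\beta+i\gamma$ with small $\gamma\neq0$ you must subtract the conjugate pair $\rho,\bar\rho$ together in the bound for $-\Re\frac{L'}{L}(\sigma+i\gamma,\chi)$. More importantly, the statement asserts that at most \emph{one} real $\chi'$ has such a zero and that the zero is \emph{simple}; neither follows from what you wrote. Both need an additional Landau-type comparison: for two real zeros (or a double zero) of a single $\chi$ one uses the non-negativity of the coefficients of $\zeta_F(s)L(s,\chi)$ and subtracts both zero terms $\frac{1}{\sigma-\beta_1}+\frac{1}{\sigma-\beta_2}$, and for two distinct real characters one uses the positivity of the four-fold product that you only introduce later in part (2). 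These steps should be stated explicitly, or the whole of part (1) should simply be attributed to Fogels as the paper does.
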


The Page Theorem over $F$ as follows is a special case of \cite[\S~3, Theorem~A]{HoffsteinRamakrishnan1995}.
\begin{proposition}\label{pro:Page-theorem}
For any $Z\ge 2$ and a suitable constant $c_5$, there is at most a real primitive character $\chi$ modulo $\ff$ with $\bfN\ff\le Z$ such that $L(s,\chi)$ has a real zero $\beta$ satisfying
\[\beta >1-\frac{c_5}{\log Z}.\]
\end{proposition}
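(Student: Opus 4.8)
The plan is to give the classical Landau--Page argument, transplanted from $\BQ$ to the fixed number field $F$, rather than to invoke \cite{HoffsteinRamakrishnan1995} as a black box; the $GL(n)$ input of that paper specializes to exactly this $GL(1)$ statement, but the route below is self-contained and effective. Suppose, for contradiction, that there are two \emph{distinct} real primitive characters $\chi_1\neq\chi_2$ modulo integral ideals $\ff_1,\ff_2$ with $\bfN\ff_1,\bfN\ff_2\le Z$, and that $L(s,\chi_1)$ and $L(s,\chi_2)$ each possess a real zero $\beta_1,\beta_2>1-c_5/\log Z$. I would produce a lower bound on the separation of these zeros from $s=1$ that is incompatible with the hypothesis once $c_5$ is small enough, thereby showing at most one such character can exist.

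First I would form the auxiliary product
\[
\Xi(s)=\zeta_F(s)\,L(s,\chi_1)\,L(s,\chi_2)\,L(s,\chi_1\chi_2),
\]
where $\zeta_F$ is the Dedekind zeta function of $F$. Since $\chi_1,\chi_2$ are quadratic and distinct, $\chi_1\chi_2$ is a nontrivial character, so $L(s,\chi_1\chi_2)$ is entire and $\Xi$ is holomorphic on $\Re s>0$ apart from a single simple pole at $s=1$ coming from $\zeta_F$. The key structural fact is positivity: for $\Re s>1$,
\[
\log\Xi(s)=\sum_{\fp,\,m\ge1}\frac{1}{m\,\bfN\fp^{ms}}\bigl(1+\chi_1(\fp)^m\bigr)\bigl(1+\chi_2(\fp)^m\bigr),
\]
and each factor is nonnegative because $\chi_i(\fp)^m\in\set{0,\pm1}$, so $\log\Xi$ has nonnegative Dirichlet coefficients over ideals. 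In particular, at real $\sigma>1$,
\[
-\frac{\zeta_F'}{\zeta_F}(\sigma)-\frac{L'}{L}(\sigma,\chi_1)-\frac{L'}{L}(\sigma,\chi_2)-\frac{L'}{L}(\sigma,\chi_1\chi_2)\ge 0.
\]

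Next I would expand each logarithmic derivative by the Hadamard factorization for Hecke $L$-functions, writing $-\frac{L'}{L}(\sigma,\chi)=\frac12\log(\bfN\ff)+O(1)-\sum_\rho\Re\frac{1}{\sigma-\rho}$ (and the analogous formula for $\zeta_F$, whose pole contributes $+\frac{1}{\sigma-1}$). Since $\Re\frac{1}{\sigma-\rho}\ge0$ for $\sigma>1$ and $\Re\rho<1$, I may retain only $\beta_1$ in the $\chi_1$-sum and $\beta_2$ in the $\chi_2$-sum and discard the rest, bounding the remaining archimedean and zero-free contributions by $O(\log(\bfN\ff_1\bfN\ff_2))=O(\log Z)$ via the zero-counting estimate of \cite[Satz~LXXI]{Landau1918}; the conductor of $\chi_1\chi_2$ divides $\ff_1\ff_2$, so $\bfN\le Z^2$ and its factor is absorbed. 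The positivity inequality then becomes
\[
\frac{1}{\sigma-\beta_1}+\frac{1}{\sigma-\beta_2}\le \frac{1}{\sigma-1}+A\log Z
\]
for an effective constant $A=A(F)$. Choosing $\sigma=1+\lambda/\log Z$ with a small fixed $\lambda<1/A$ and inserting $\beta_1,\beta_2>1-c_5/\log Z$, the left side exceeds $\frac{2\log Z}{\lambda+c_5}$ while the right side equals $(\lambda^{-1}+A)\log Z$; for $c_5$ sufficiently small (depending only on $F$) the former strictly exceeds the latter, a contradiction. Hence at most one such $\chi$ exists.

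The hard part will be the uniformity and effectivity of the $O(\log Z)$ error term: one must run the Hadamard/partial-fraction estimate for Hecke $L$-functions so that every implied constant depends only on $n=[F:\BQ]$ and $\Delta$, and not on the moduli $\ff_i$, while correctly handling the archimedean $\Gamma$-factors and the possible imprimitivity of $\chi_1\chi_2$ (passed to its inducing primitive character, whose conductor still has norm $\le Z^2$). The positivity of the four-fold product and the simple pole of $\zeta_F$ are precisely the features that make the $\BQ$-argument transfer verbatim to $F$; alternatively, one may simply cite \cite[\S~3, Theorem~A]{HoffsteinRamakrishnan1995}, of which this is the $GL(1)$ specialization.
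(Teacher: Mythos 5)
Your argument is correct, but it takes a genuinely different route from the paper: the paper offers no proof at all for this proposition, simply deriving it as the $GL(1)$ special case of \cite[\S~3, Theorem~A]{HoffsteinRamakrishnan1995}, whereas you reconstruct the classical Landau--Page repulsion argument over $F$. Your key steps all check out: for distinct real primitive $\chi_1\neq\chi_2$ the product $\chi_1\chi_2$ is non-principal (were it principal, quadraticity and primitivity would force $\chi_1=\chi_2$), so $\Xi(s)=\zeta_F(s)L(s,\chi_1)L(s,\chi_2)L(s,\chi_1\chi_2)$ has only the simple pole of $\zeta_F$ at $s=1$; the coefficient identity $1+\chi_1^m+\chi_2^m+(\chi_1\chi_2)^m=(1+\chi_1^m)(1+\chi_2^m)\ge0$ gives the positivity; and the Hadamard expansion with all zero-terms $\Re\frac{1}{\sigma-\rho}\ge0$ discarded except $\beta_1,\beta_2$ yields $\frac{1}{\sigma-\beta_1}+\frac{1}{\sigma-\beta_2}\le\frac{1}{\sigma-1}+A\log Z$, which at $\sigma=1+\lambda/\log Z$ with $\lambda<1/A$ and $c_5$ small is contradictory. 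What your approach buys is a self-contained, effective proof with constants depending only on $[F:\BQ]$ and $\Delta$, at the cost of having to carry out the uniform Hadamard/partial-fraction estimates for Hecke $L$-functions (including the $\Gamma$-factors and the conductor of the primitive character inducing $\chi_1\chi_2$, of norm $\le Z^2$) --- precisely the technical burden the paper avoids by citing Hoffstein--Ramakrishnan, whose result moreover gives far more than is needed here. Since you correctly flag that the citation route is also available, there is no gap; your writeup would serve as a legitimate replacement for the paper's one-line derivation.
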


\subsection{Equidistribution of residue symbols}
\label{Equidistribution of residue symbols}

Recall that $abc=q_1^{t_1}\cdots q_\ell^{t_\ell}$ is the prime decomposition of $abc$.
Let $\alpha=(\alpha_1,\cdots,\alpha_k)$ be a vector with $\alpha_i\in\set{1,5,9,13}$ and $\alpha_1\cdots\alpha_k\equiv1\bmod8$.
Let $\bfB=(B_{ij})_{k\times k}\in M_k(\BF_2)$ be a symmetric matrix with rank $k-2$ and $\bfB{\bf1}={\bf0}$.
Then $\Ker\bfB=\set{{\bf0},{\bf1},\bfd,\bfd+{\bf1}}$ for some vector $\bfd=(s_1,\cdots,s_k)^\rmT$ with $s_k=0$.

Denote by $C_k(x,\alpha,\bfB)$ the set of all $n=p_1\cdots p_k$ satisfying
\begin{itemize}
\item $n\le x$ and $p_1<\cdots<p_k$;
\item $p_i\equiv\alpha_i\bmod{16}$ for all $1\le i\le k$;
\item $\aleg{p_j}{p_i}=B_{ij}$ for all $1\le i<j\le k$;
\item $\leg{p_i}{q_j}=1$ for all $1\le i\le k$ and $1\le j\le\ell$;
\item $\leg{d'}d_4\leg d{d'}_4=-1$, where $d=p_1^{s_1}\cdots p_k^{s_k}$ and $d'=n/d$,
\end{itemize}
and denote by $C_k'(x,\alpha,\bfB)$ the set of all $\eta=\lambda_1\cdots\lambda_k$ satisfying
\begin{itemize}
\item $\bfN\eta\le x$ and $\bfN\lambda_1<\cdots<\bfN\lambda_k$;
\item $\lambda_i\in\CP$ and $\bfN\lambda_i\equiv\alpha_i\bmod{16}$ for all $1\le i\le k$;
\item $\aleg{\bfN\lambda_j}{\bfN\lambda_i}=B_{ij}$ for all $1\le i<j\le k$;
\item $\leg{\bfN\lambda_i}{q_j}=1$ for all $1\le i\le k$ and $1\le j\le\ell$;
\item $\leg{\delta'}{\delta}_2=-1$, where $\delta=\lambda_1^{s_1}\cdots\lambda_k^{s_k}$ and $\delta'=\eta/\delta$.
\end{itemize}
Here, $\CP$ is the set of primary primes in $\BZ[I]$ with positive imaginary part.

In this section, we will give an estimation of the number of $C_k(x,\alpha,\bfB)$.

\begin{lemma}\label{lem:identify-Ck}
There is a bijection
\[C_k'(x,\alpha,\bfB)\longrightarrow C_k(x,\alpha,\bfB),\quad \eta\mapsto\bfN\eta.\]
\end{lemma}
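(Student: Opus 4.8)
The plan is to establish the claimed map $\eta\mapsto\bfN\eta$ is a well-defined bijection by translating every defining condition of $C_k'(x,\alpha,\bfB)$ into the corresponding condition of $C_k(x,\alpha,\bfB)$, exploiting the standard dictionary between primary Gaussian primes and rational primes $p\equiv1\bmod4$. First I would recall that every rational prime $p\equiv1\bmod4$ factors in $\BZ[I]$ as $p=\lambda\ov\lambda$ with $\lambda$ primary, and that among the associates of a split prime there is a unique primary generator; restricting further to $\CP$ (positive imaginary part) pins down $\lambda$ uniquely. This gives a bijection between $\set{\lambda\in\CP:\bfN\lambda=p}$ and the set of such $p$, hence a bijection on the level of individual factors compatible with $\bfN\lambda_i=p_i$ and the ordering $\bfN\lambda_1<\dots<\bfN\lambda_k \leftrightarrow p_1<\dots<p_k$. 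The congruence conditions $\bfN\lambda_i\equiv\alpha_i\bmod16$ and $\bfN\lambda_i\le x$ match $p_i\equiv\alpha_i\bmod16$ and $n\le x$ verbatim, and likewise $\leg{\bfN\lambda_i}{q_j}=1$ matches $\leg{p_i}{q_j}=1$ since $\bfN\lambda_i=p_i$.

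The substantive content is matching the two \emph{reciprocity} conditions. For the off-diagonal entries, I would show $\aleg{\bfN\lambda_j}{\bfN\lambda_i}=\aleg{p_j}{p_i}$, which is immediate once $\bfN\lambda_i=p_i$, so the condition $\aleg{\bfN\lambda_j}{\bfN\lambda_i}=B_{ij}$ transfers directly to $\aleg{p_j}{p_i}=B_{ij}$. The key step is the last condition: I must prove that the quadratic residue symbol identity $\leg{\delta'}{\delta}_2=-1$ over $\BZ[I]$ is equivalent to the rational quartic condition $\leg{d'}{d}_4\leg{d}{d'}_4=-1$, where $d=\prod p_i^{s_i}$, $d'=n/d$, and $\delta,\delta'$ are the corresponding Gaussian products. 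Here I would use the defining relation $\leg{\alpha}{\lambda}_2=\leg{\alpha}{\lambda}_4^2$ together with the rational quartic symbol definition $\leg{a}{p}_4=\leg{a}{\lambda}_4$ from \eqref{eq:rational-quartic-residue-symbol}, and the quartic reciprocity law stated in the excerpt. The plan is to expand $\leg{\delta'}{\delta}_2$ multiplicatively over the Gaussian prime factors of $\delta$ and $\delta'$, rewrite each quadratic symbol as the square of the corresponding quartic symbol, and recognize the resulting product of quartic symbols as exactly $\leg{d'}{d}_4\leg{d}{d'}_4$ after accounting for the fact that $\bfN\lambda_i\equiv\alpha_i\equiv1\bmod4$ (indeed $\bmod 16$) makes all the reciprocity sign factors $(-1)^{\frac{\bfN\lambda-1}4\cdot\frac{\bfN\lambda'-1}4}$ trivial.

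The main obstacle will be this final equivalence: disentangling how the \emph{quadratic} symbol over $\BZ[I]$ corresponds to the \emph{quartic} symbol over $\BQ$, being careful about which factors $\lambda_i$ enter $\delta$ versus $\delta'$ (governed by the exponent vector $\bfd=(s_1,\dots,s_k)^\rmT$), and verifying that the various normalization constraints—primariness of each $\lambda_i$, the congruences $\bmod16$, and $\alpha_1\cdots\alpha_k\equiv1\bmod8$—are precisely what is needed to kill the reciprocity correction terms and make the two conditions line up with the correct sign. I expect that once $\bfN\lambda_i\equiv1\bmod16$ is used to trivialize the reciprocity exponents, the identity $\leg{\delta'}{\delta}_2=\leg{d'}{d}_4\leg{d}{d'}_4$ (or its additive analogue) falls out cleanly, so that $\leg{\delta'}{\delta}_2=-1$ holds if and only if $\leg{d'}{d}_4\leg{d}{d'}_4=-1$. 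Having matched all five conditions and their ambient constraints under the norm-compatible bijection of prime factors, I conclude that $\eta\mapsto\bfN\eta$ restricts to a bijection $C_k'(x,\alpha,\bfB)\to C_k(x,\alpha,\bfB)$, with inverse given by sending each $p_i$ to its unique primary generator in $\CP$.
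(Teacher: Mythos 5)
Your overall route is the same as the paper's: the unique primary prime in $\CP$ above each rational prime $p\equiv1\bmod4$ gives the bijection on prime factors, every condition except the last transfers verbatim through $\bfN\lambda_i=p_i$, and the whole content of the lemma is the identity $\leg{d'}{d}_4\leg{d}{d'}_4=\leg{\delta'}{\delta}_2$. But your justification for that identity has a genuine error at its crux. You claim the quartic reciprocity sign factors $(-1)^{\frac{\bfN\lambda-1}4\cdot\frac{\bfN\lambda'-1}4}$ are trivial because $\bfN\lambda_i\equiv\alpha_i\equiv1\bmod4$ ``indeed $\bmod{16}$''. Neither holds: the hypothesis is only $\bfN\lambda_i\equiv\alpha_i\bmod{16}$ with $\alpha_i\in\set{1,5,9,13}$, and $\frac{\bfN\lambda_i-1}4$ is odd exactly when $\alpha_i\equiv5\bmod8$, which is permitted. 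So for a pair of primes both congruent to $5$ modulo $8$ the individual sign factor is $-1$, and the global constraint $\alpha_1\cdots\alpha_k\equiv1\bmod8$ does not force the number of such pairs split between $d$ and $d'$ to be even (already for $k=2$, $d=p_1$, $d'=p_2$ with $p_1\equiv p_2\equiv5\bmod8$ there is exactly one).

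The signs do disappear, but for a different reason, and this is the step your sketch is missing. Writing $p_i=\lambda_i\ov{\lambda_i}$, each factor $\leg{p_i}{p_j}_4\leg{p_j}{p_i}_4$ expands into four quartic symbols; applying reciprocity to $\leg{\lambda_i}{\lambda_j}_4$ and to $\leg{\ov{\lambda_i}}{\lambda_j}_4$ produces the \emph{same} exponent $\frac{\bfN\lambda_i-1}4\cdot\frac{\bfN\lambda_j-1}4$ twice, because $\bfN\ov{\lambda_i}=\bfN\lambda_i$, so the two signs cancel each other in pairs irrespective of the congruence classes. One then still needs the conjugation identity $\leg{\lambda_j}{\ov{\lambda_i}}_4=\ov{\leg{\ov{\lambda_j}}{\lambda_i}_4}$ so that the two mixed symbols multiply to $1$, leaving $\leg{\lambda_j}{\lambda_i}_4^2=\leg{\lambda_j}{\lambda_i}_2$; your plan of ``rewriting each quadratic symbol as the square of a quartic symbol'' does not by itself account for these mixed terms. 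With these two points repaired, your argument becomes exactly the computation in the paper's proof.
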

\begin{proof}
For any $\eta=\lambda_1\cdots\lambda_k\in C_k'(x,\alpha,\bfB)$,  denote by $p_i=\bfN\lambda_i$.
By the quartic reciprocity law, we have
\[\begin{split}
&\leg{p_i}{p_j}_4\leg{p_j}{p_i}_4
 =\leg{\lambda_i\ov{\lambda_i}}{\lambda_j}_4
 \leg{\lambda_j\ov{\lambda_j}}{\lambda_i}_4
 =\leg{\lambda_i}{\lambda_j}_4\leg{\ov{\lambda_i}}{\lambda_j}_4
 \leg{\lambda_j}{\lambda_i}_4\leg{\ov{\lambda_j}}{\lambda_i}_4\\
=&\leg{\lambda_j}{\lambda_i}_4\leg{{\lambda_j}}{\ov{\lambda_i}}_4
 \leg{\lambda_j}{\lambda_i}_4\leg{\ov{\lambda_j}}{\lambda_i}_4
 =\leg{\lambda_j}{\lambda_i}_2
 \ov{\leg{{\ov{\lambda_j}}}{\lambda_i}_4}\leg{\ov{\lambda_j}}{\lambda_i}_4
 =\leg{\lambda_j}{\lambda_i}_2.
\end{split}\]
Therefore,
\[\leg{d'}d_4\leg d{d'}_4=\leg{\delta'}{\delta}_2=-1,\]
where $d=\bfN\delta$ and $d'=\bfN\delta'$.
Hence $\bfN\eta\in C_k(x,\alpha,\bfB)$.

For any rational prime $p\equiv1\bmod4$, there is exactly one primary prime in $\CP$ with norm $p$.
This gives the surjectivity.
The injectivity is trivial.
\end{proof}

Denote by $T_k(x)$ the set of all $n=p_1\cdots p_{k-1}$ satisfying
\begin{itemize}
\item $n\le x$ and $p_1<\cdots <p_{k-1}$;
\item $p_i\equiv\alpha_i\bmod{16}$ for all $1\le i\le k-1$;
\item $\aleg{p_j}{p_i}=B_{ij}$ for all $1\le i<j\le k-1$;
\item $\leg{p_i}{q_j}=1$ for all $1\le i\le k-1$ and $1\le j\le \ell$,
\end{itemize}
and denote by $T'_k(x)$ the set of all $\eta=\lambda_1\cdots\lambda_{k-1}$ satisfying
\begin{itemize}
\item $\bfN\eta\le x$ and $\bfN\lambda_1<\cdots<\bfN\lambda_{k-1}$;
\item $\lambda_i\in\CP$ and $\bfN\lambda_i\equiv\alpha_i\bmod{16}$ for all $1\le i\le k-1$;
\item $\aleg{\bfN\lambda_j}{\bfN\lambda_i}=B_{ij}$ for all $1\le i<j\le k-1$;
\item $\leg{\bfN\lambda_i}{q_j}=1$ for all $1\le i<k$ and $1\le j\le\ell$.
\end{itemize}
The independence property of Legendre symbols in \cite{Rhoades2009} implies that
\begin{equation}\label{eq:number-Tk(x)}
\#T_k(x)\sim 2^{-(\ell+3)(k-1)-\binom{k-1}2}\cdot\#C_{k-1}(x),
\end{equation}
where $C_k(x)$ is the set of all positive square-free integers $n\le x$ with exactly $k$ prime factors.

\begin{lemma}\label{lem:identify-Tk(x)}
There is a bijection
\[T'_k(x)\longrightarrow T_k(x),\quad \eta\mapsto\bfN\eta.\]
\end{lemma}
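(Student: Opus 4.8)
The plan is to mirror, in simplified form, the proof of Lemma~\ref{lem:identify-Ck}. Note first that $T_k'(x)$ and $T_k(x)$ are obtained from $C_k'(x,\alpha,\bfB)$ and $C_k(x,\alpha,\bfB)$ by deleting the final (residue-symbol) bullet and shifting the index from $k$ to $k-1$; in particular the condition that forced us to invoke the quartic reciprocity law is now absent, so the argument here is strictly easier. First I would check that $\eta\mapsto\bfN\eta$ is well defined into $T_k(x)$. Writing $\eta=\lambda_1\cdots\lambda_{k-1}\in T_k'(x)$ and $p_i=\bfN\lambda_i$, every defining condition on $\eta$ is already expressed through the rational integers $\bfN\lambda_i$: the bound $\bfN\eta\le x$, the strict ordering $\bfN\lambda_1<\cdots<\bfN\lambda_{k-1}$, the congruences $\bfN\lambda_i\equiv\alpha_i\bmod16$, the Jacobi-symbol values $\aleg{\bfN\lambda_j}{\bfN\lambda_i}=B_{ij}$ and the conditions $\leg{\bfN\lambda_i}{q_j}=1$. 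Each of these transfers verbatim to the corresponding condition defining $T_k(x)$, so $\bfN\eta\in T_k(x)$.

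The core input is the same uniqueness statement used in Lemma~\ref{lem:identify-Ck}: for every rational prime $p\equiv1\bmod4$ there is exactly one primary prime of $\BZ[I]$ lying in $\CP$ with norm $p$. Indeed $p$ splits as $p=\lambda\ov\lambda$; both $\lambda$ and $\ov\lambda$ are primary because the ideal $(2+2I)$ is stable under complex conjugation, and exactly one of them has positive imaginary part since $\lambda$ is non-real. I would use this to build the inverse map: given $n=p_1\cdots p_{k-1}\in T_k(x)$ with $p_1<\cdots<p_{k-1}$ and each $p_i\equiv\alpha_i\equiv1\bmod4$, let $\lambda_i$ be the unique element of $\CP$ with $\bfN\lambda_i=p_i$ and set $\eta=\lambda_1\cdots\lambda_{k-1}$. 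The conditions defining $n\in T_k(x)$ then read off as exactly the conditions placing $\eta$ in $T_k'(x)$, and $\bfN\eta=n$, which gives surjectivity. Injectivity is immediate: if $\bfN\eta=\bfN\eta'$ then the two factorizations produce the same increasing sequence of prime norms $p_i$, and by the uniqueness statement each factor $\lambda_i$ is recovered from $p_i$, whence $\eta=\eta'$.

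I do not anticipate a genuine obstacle, this being a simpler analogue of the already-established Lemma~\ref{lem:identify-Ck}. The one point deserving a remark is that every $\lambda_i\in\CP$ is automatically a split (Gaussian) prime: the primary associate of an inert rational prime $q\equiv3\bmod4$ is real (it is $\pm q$), hence has vanishing imaginary part and lies outside $\CP$. Consequently each $\bfN\lambda_i$ is a genuine rational prime $\equiv1\bmod4$, so $\bfN\eta$ is a squarefree product of exactly $k-1$ distinct primes, exactly as required for membership in $T_k(x)$; this keeps both directions of the correspondence inside the stated sets.
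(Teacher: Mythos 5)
Your proposal is correct and follows the same route as the paper: the paper's proof consists precisely of the observation that each rational prime $p\equiv1\bmod4$ has exactly one primary prime in $\CP$ of norm $p$, giving surjectivity, with injectivity being trivial. Your additional checks (well-definedness via the norm-expressed conditions, the justification that conjugation preserves primarity, and the remark excluding inert primes) are correct elaborations of details the paper leaves implicit.
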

\begin{proof}
For any rational prime $p\equiv1\bmod4$, there is exactly one primary prime in $\CP$ with norm $p$.
This proves the surjectivity.
The injectivity is trivial.
\end{proof}

\begin{theorem}\label{thm:independence}
Notations as above with $k>1$.
We have
\[\#C_k(x,\alpha,\bfB)\sim 2^{-k\ell-3k-1-\binom k2}\cdot\#C_k(x),\]
where $C_k(x)$ is the set of all positive square-free integers $n\le x$ with exactly $k$ prime factors.
\end{theorem}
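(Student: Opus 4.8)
The plan is to transport the count to the Gaussian integers and then isolate the genuinely quartic condition. By Lemma~\ref{lem:identify-Ck}, the norm map gives $\#C_k(x,\alpha,\bfB)=\#C_k'(x,\alpha,\bfB)$, so I would work with tuples $\eta=\lambda_1\cdots\lambda_k$ of primary primes in $\CP$ with $\bfN\lambda_1<\cdots<\bfN\lambda_k$. The first four defining conditions of $C_k'(x,\alpha,\bfB)$ depend on each $\lambda_i$ only through its norm $p_i=\bfN\lambda_i$; dropping the last condition $\leg{\delta'}{\delta}_2=-1$ leaves a set $C_k^{0\prime}(x,\alpha,\bfB)$ that is in norm-bijection with the rational set $C_k^{0}(x,\alpha,\bfB)$ cut out by $p_i\equiv\alpha_i\bmod16$, by $\aleg{p_j}{p_i}=B_{ij}$, and by $\leg{p_i}{q_j}=1$. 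Writing the indicator of the last condition as $\frac12\bigl(1-\leg{\delta'}{\delta}_2\bigr)$, I obtain
\[\#C_k(x,\alpha,\bfB)=\tfrac12\#C_k^{0}(x,\alpha,\bfB)-\tfrac12\Sigma(x),\qquad \Sigma(x)=\sum_{\eta\in C_k^{0\prime}(x,\alpha,\bfB)}\leg{\delta'}{\delta}_2.\]

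For the first term I would invoke the independence property of Legendre symbols of \cite{Rhoades2009}, applied as in \eqref{eq:number-Tk(x)} but to all $k$ primes. Since every $\alpha_i\equiv1\bmod4$, quadratic reciprocity makes the symmetric pattern $\bfB$ admissible, so
\[\#C_k^{0}(x,\alpha,\bfB)\sim 2^{-(\ell+3)k-\binom k2}\cdot\#C_k(x)=2^{-k\ell-3k-\binom k2}\cdot\#C_k(x),\]
the factors $2^{-3k}$, $2^{-k\ell}$, $2^{-\binom k2}$ accounting for the congruences modulo $16$, the $k\ell$ conditions $\leg{p_i}{q_j}=1$, and the $\binom k2$ Legendre conditions. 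Thus the theorem reduces to the cancellation estimate $\Sigma(x)=o\bigl(\#C_k(x)\bigr)$, with the missing factor $2^{-1}$ supplied by the $\frac12$ above. This is exactly where the analytic input over $\BQ(I)$ is needed: unlike the conditions defining $C_k^{0}$, the symbol $\leg{\delta'}{\delta}_2$ is not a rational Legendre symbol and so is not governed by \cite{Rhoades2009}.

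To estimate $\Sigma(x)$ I would sum over the largest prime $\lambda_k$ last. As $s_k=0$, the prime $\lambda_k$ lies in $\delta'$, so $\leg{\delta'}{\delta}_2=\leg{\lambda_k}{\delta}_2\leg{\delta'/\lambda_k}{\delta}_2$ with the second factor independent of $\lambda_k$. Fixing $\lambda_1,\dots,\lambda_{k-1}$, every condition imposed on $\lambda_k$ depends only on $\bfN\lambda_k$, so by orthogonality its indicator expands as $\sum_\chi c_\chi\,\chi(\lambda_k)$, where each $\chi$ is a ray class character over $\BQ(I)$ that factors through the norm and is therefore invariant under $\lambda_k\mapsto\ov{\lambda_k}$. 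The inner sum thus becomes a finite combination of $\psi$-type sums for the characters $\leg{\,\cdot\,}{\delta}_2\cdot\chi$. The crucial point is that each such character is \emph{nontrivial}: because $\bfd\neq{\bf0}$ the modulus $\delta=\prod_{s_i=1}\lambda_i$ is a nonunit coprime to its conjugate $\ov\delta$, so $\leg{\,\cdot\,}{\delta}_2$ is not conjugation-invariant, whereas every $\chi$ is; their product can never be trivial. Hence no main term survives and the sum over $\lambda_k$ exhibits cancellation.

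Quantitatively I would bound each inner sum through the explicit formula \eqref{eq:explicit-formula-of-psi-x-chi}, using the zero-free region of Proposition~\ref{pro:Siegel-theorem} and Page's theorem (Proposition~\ref{pro:Page-theorem}) to handle the at most one exceptional real zero, and then sum over $\lambda_1,\dots,\lambda_{k-1}$, whose number is controlled by $\#C_{k-1}(x)$ via \eqref{eq:number-Tk(x)} and \eqref{eq:number-C-k}. The main obstacle is uniformity: the conductor, divisible by $\delta$, grows with the fixed primes and may be as large as a power of $x$, so the Siegel--Walfisz savings degrade. The standard remedy, following \cite{CremonaOdoni1989}, is to reserve the full range for the largest prime $\lambda_k$, split the sum according to the size of the conductor, and absorb the unique possible Siegel zero via the ineffective bound of Proposition~\ref{pro:Siegel-theorem}(2). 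Arranging these error terms to be summable against the count of the smaller primes, so that the total is genuinely $o\bigl(\#C_k(x)\bigr)$, is the heart of the argument.
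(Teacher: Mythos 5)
Your proposal is correct and follows essentially the same route as the paper: both fix the first $k-1$ primes and count the last one in residue classes over $\BZ[I]$ in the style of Cremona--Odoni, extract the main term (your $\tfrac12\bigl(1-\leg{\delta'}{\delta}_2\bigr)$ split is a reorganization of the paper's direct computation of the density $\#\msa_\eta/\varphi(\eta)=2^{-k-\ell-4}$ in Lemma~\ref{lem:number-A-eta}), and kill the oscillating part by observing that the twist by $\leg{\cdot}{\delta}_2$ can never coincide with a norm character, before applying the explicit formula together with the Siegel and Page theorems over $\BQ(I)$. Your conjugation-invariance argument for the nontriviality of the twisted characters is a clean repackaging of the paper's Lemmas~\ref{lem:size-of-fiber} and~\ref{lem:different-characters}.
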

\begin{proof}
Similar to \cite{CremonaOdoni1989}, we consider the comparison map
\[f: C_k'(x,\alpha,\bfB)\longrightarrow T'_k(x),\quad \lambda_1\cdots\lambda_k\mapsto\lambda_1\cdots\lambda_{k-1}.\]
Let $Q_1$ be the product of all primary primes $\mu\in\CP$ dividing $abc$, and $Q_2$ the product of all prime $q\mid abc$ with $q\equiv3\bmod4$.
For any $\eta=\lambda_1\cdots\lambda_{k-1}\in T_k'(x)$, denote by $\fc_\eta=16\bfN(\eta Q_1)Q_2\BZ[I]$.
It's easy to see that if $\beta$ satisfies
\begin{itemize}
\item $\bfN\beta\equiv\alpha_k\bmod{16}$;
\item $\aleg{\bfN\beta}{\bfN\lambda_i}=B_{ik}$ for all $1\le i\le k-1$;
\item $\leg{\bfN\beta}{q_j}=1$ for all $1\le j\le\ell$;
\item $\leg{\beta}\delta_2=-\leg{\eta/\delta}\delta_2$, where $\delta=\lambda_1^{s_1}\cdots\lambda_k^{s_k}$,
\end{itemize}
then so is $\beta'\equiv \beta\bmod 16\bfN(\eta Q_1)Q_2$.
Denote by
\[\msa_\eta\subseteq(\BZ[I]/\fc_\eta)^\times\]
the classes of such $\beta$.
Then $\eta$ lies in the image of $f$ if and only if there exists $\theta\in\CP$ such that $\bfN\lambda_{k-1}<\bfN\theta\le x/\bfN\eta$ and $\theta\mod\fc_\eta\in\msa_\eta$ by noting that $s_k=0$.

\begin{lemma}\label{lem:size-of-fiber}
Let $\chi_1,\chi_2:G\to\BF_2$ be two different non-trivial quadratic character on a finite group $G$.
Then the size of $\chi_1^{-1}(i)\cap\chi_2^{-1}(j)$ is $\#G/4$ for any $i,j\in\BF_2$.
\end{lemma}
\begin{proof}
The sizes of $\chi_1^{-1}(i)$ and $\chi_2^{-1}(j)$ are $\#G/2$.
Since $\chi_1\neq\chi_2$, these two sets always have a common element, which means that $(\chi_1,\chi_2):G\to\BF_2^2$ is surjective.
The result then follows.
\end{proof}

\begin{lemma}\label{lem:different-characters}
Assume that $\pi\in\CP$ and $p=\bfN\pi$.
Then $\leg{x}{\pi}_2$ and $\leg{\bfN x}{p}$ are different non-trivial quadratic characters on $\bigl(\BZ[I]/p\BZ[I]\bigr)^\times$.
\end{lemma}
\begin{proof}
Since $\bfN:\bigl(\BZ[I]/p\BZ[I]\bigr)^\times\to(\BZ/p\BZ)^\times$ is surjective, $\leg{\bfN x}{p}$ is non-trivial.
Let $\gamma\in\BZ[I]$ be an element such that $\pi\gamma\equiv1\bmod\ov\pi$.
Let $x=\ov{\pi\gamma}+\alpha\pi\gamma$ for some $\alpha\in\BZ$ coprime to $p$.
Then
\[\leg x\pi_2=\leg{\ov{\pi\gamma}}\pi_2=1.\]
Denote by $A=(\pi\gamma)^2+(\ov{\pi\gamma})^2$.
Then $\bfN(x)\equiv \alpha A\bmod p$ and
\[\leg{\bfN x}{p}=\leg{\alpha A}{p}.\]
Hence $\leg{x}{\pi}_2\neq\leg{\bfN x}{p}$ by taking $\leg\alpha p=-\leg Ap$.
\end{proof}

\begin{lemma}\label{lem:number-A-eta}
Let $\varphi(\eta)$ be the cardinality of $G=(\BZ[I]/\fc_\eta)^\times$.
Then
\[\#\msa_\eta=2^{-k-\ell-4}\varphi(\eta).\]
\end{lemma}
\begin{proof}
By the Chinese Remainder Theorem, we have a natural isomorphism
\[\begin{split}
G&\cong\biggl(\frac{\BZ[I]}{16\BZ[I]}\biggr)^\times\times
\prod_{i=1}^{k-1}\biggl(\frac{\BZ[I]}{\bfN\lambda_i\BZ[I]}\biggr)^\times\times
\prod_{\mu\mid Q_1}\biggl(\frac{\BZ[I]}{\bfN\mu\BZ[I]}\biggr)^\times\times
\prod_{q\mid Q_2}\biggl(\frac{\BZ[I]}{q\BZ[I]}\biggr)^\times\\
\beta&\mapsto (\beta_0,\beta_1,\cdots,\beta_{k-1},\beta_\mu',\beta_q').
\end{split}\]
Then $\beta\in\msa_\eta$ if and only if
\begin{enumerate}
\item $\beta_0\equiv 1\bmod{2+2I}$ and $\bfN\beta_0\equiv\alpha_k\bmod{16}$;
\item $\aleg{\bfN\beta_i}{\bfN\lambda_i}=B_{ik}$ for all $1\le i\le k-1$;
\item $\leg{\bfN\beta_\mu'}{\bfN\mu}=1$ for all $\mu\mid Q_1$;
\item $\leg{\bfN\beta_q'}{q}=1$ for all $q\mid Q_2$;
\item $\prod_{s_i=1}\leg{\beta_i}{\lambda_i}_2=-\leg{\eta/\delta}\delta_2$.
\end{enumerate}
(1) selects $\frac14\times \frac14$ number of elements in $\bigl(\BZ[I]/16\BZ[I]\bigr)^\times$.
Note that $\bigl(\BZ[I]/\lambda_i\BZ[I]\bigr)^\times\cong(\BZ/\bfN\lambda_i\BZ)^\times$, each conditions in (2)--(4) selects half number of elements in each corresponding component.

To treat (5), we choose $\beta_1,\cdots,\beta_{k-1}$ as following.
Since $s_k=0$, there is some $s_j=1$ for $1\le j\le k-1$. For $i=1,2,\cdots,j-1,j+1,\cdots,k-1$, we choose $\beta_i\in \bigl(\BZ[I]/N\lambda_i\BZ[I]\bigr)^\times$ satisfying (2), and   there are half number of $\bigl(\BZ[I]/N\lambda_i\BZ[I]\bigr)^\times$ choices. With above chosen $\beta_1,\cdots,\beta_{j-1},\beta_{j+1},\cdots,\beta_{k-1}$, applying Lemmas~\ref{lem:size-of-fiber} and \ref{lem:different-characters} to $\pi=\lambda_j$, (5) and $\aleg{\bfN\beta_j}{\bfN\lambda_j}=B_{jk}$ selects $\frac14$ number of elements in $\bigl(\BZ[I]/N\lambda_j\BZ[I]\bigr)^\times$.
%If we view the image space of $\biggl(\leg{x_1}{\lambda_1}_2,\dots,\leg{x_{k-1}}{\lambda_{k-1}}_2\biggr)$ as a vector space $\BF_2^{k-1}$, the left hand side of (5) defines a hyperplane.
%By applying Lemmas~\ref{lem:size-of-fiber} and \ref{lem:different-characters} to $\pi=\lambda_i$, (5) selects half number of elements in $\prod_{i=1}^{k-1}\biggl(\BZ[I]/\bfN\lambda_i\BZ[I]\biggr)^\times$ satisfying (2).
Hence
\[\frac{\#\msa_\eta}{\varphi(\eta)}
=\frac1{16}\times\frac1{2^{k-1}}\times\frac1{2^\ell}\times\frac12=2^{-k-\ell-4}.\qedhere\]
\end{proof}

For any $\eta\in T'_k(x)$, denote by $h(\eta)$ the number of primes $\theta\in\CP$ such that $\bfN\lambda_{k-1}<\bfN\theta\le x/\bfN\eta$ and $\theta\mod\fc_\eta\in\msa_\eta$.
Then we have
\begin{equation}\label{eq:number-C_k'}
\#C_k'(x,\alpha,\bfB)=\sum_{\eta\in T'_k(x)} h(\eta).
\end{equation}
%
%We will divide the sum in \eqref{eq:number-C_k'} into several parts according to the norm of $\eta$. To this purpose, we introduce some notation. We define $\mu$ and $\nu$ to be  respectively. For a set $M$ consisting of positive integers and a function $g$ on $\BZ[I]$, we define
%\[\sum^*_{\bfN\delta\in M}}g(\delta)=\sum_{\delta\in T'_k(x)\atop\bfN\delta\in M} g(\delta).\]
Denote by
\[M_1=(\log x)^{100}
\quad\text{and}\quad
M_2=\exp\biggl(\frac{\log x}{(\log\log x)^{100}}\biggr).\]
We will use
\[\sum^*_{\bfN\eta\in S}\]
to denote a summation over $\eta\in T'_k(x)$ with $\bfN\eta\in S$.

\begin{lemma}\label{lem:Cremona}
We have
\[\begin{split}
\sum^*_{20<\bfN\eta\le M_1}\Li(x/\bfN\eta)
	&=o\biggl(\frac{x(\log\log x)^{k-1}}{\log x}\biggr),\\
\sum^*_{M_2<\bfN\eta\le x^{\frac{k-1}k}}\Li(x/\bfN\eta)
	&=o\biggl(\frac{x(\log\log x)^{k-1}}{\log x}\biggr),\\
\sum^*_{M_1<\bfN\eta\le M_2}\Li(x/\bfN\eta)
	&\sim\frac{\#T'_k(x)}{k-1}\log\log x.
\end{split}\]
\end{lemma}
\begin{proof}
The proof is similar to \cite[Lemma~3.1]{CremonaOdoni1989}.
\end{proof}

Denote by $\pi(x)$ the number of prime ideals in $\BZ[I]$ with norm less than or equal $x$.
Then the prime ideal theorem over $\BZ[I]$ tells $\pi(x)\sim \Li(x)$.
Certainly, $h(\eta)\le\pi(x/\bfN\eta)$.
Then we have
\begin{equation}\label{eq:part-h-eta}
\begin{split}
\sum^*_{\bfN\eta\le20}h(\eta)
&\ll\Li(x),\\
\sum^*_{20<\bfN\eta\le M_1}h(\eta)
&=o\biggl(\frac{x(\log\log x)^{k-1}}{\log x}\biggr),\\
\sum^*_{M_2<\bfN\eta\le x^{\frac{k-1}k}} h(\eta)
&=o\biggl(\frac{x(\log\log x)^{k-1}}{\log x}\biggr)
\end{split}
\end{equation}
by Lemma~\ref{lem:Cremona}.
If $\bfN\eta>x^{\frac{k-1}k}$, then $\bfN\lambda_{k-1}>x^{\frac1k}$ and $x/\bfN\eta<x^{\frac1k}<\bfN\lambda_{k-1}$.
Therefore, $h(\eta)=0$ and
\begin{equation}\label{eq:part2-h-eta}
\sum^*_{x^{\frac{k-1}k}<\bfN\eta\le x}h(\eta)=0.
\end{equation}

Denote by $\pi'(y,\msb,\fa)$ the number of primes $\theta\in\BZ[I]$ such that $\bfN\theta\le y$ and $\theta\mod\fa\in\msb\subseteq\bigl(\BZ[I]/\fa\bigr)^\times$.
Since $\theta\in\CP$ has positive imaginary part, we have
\[h(\eta)=\half\Bigl(\pi'(x/\bfN\eta,\msa_\eta,\fc_\eta)-
\pi'(\bfN\lambda_{k-1},\msa_\eta,\fc_\eta)\Bigr)+O(\sqrt x).\]
Here the error term origins from $-p$ with $p\equiv3\mod4$ rational prime, and the implicit constant is absolute.
By \eqref{eq:number-C_k'}, \eqref{eq:part-h-eta}, \eqref{eq:part2-h-eta} and the facts that
\[\sum^*_{M_1<\bfN\eta\le M_2}
\pi'(\bfN\lambda_{k-1},\msa_\eta,\fc_\eta)
\ll M_2\Li(M_2)=o\biggl(\frac{x(\log\log x)^{k-1}}{\log x}\biggr)\] and $M_2$ is of much small order than $x^{\frac14}$,  
we obtain
\begin{equation}\label{eq:num-C_k'(x,a,B)-M1-M2}
\#C_k'(x,\alpha,B)\sim\half\sum^*_{M_1<\bfN\eta\le M_2}
\pi'(x/\bfN\eta,\msa_\eta,\fc_\eta)
\end{equation}
with error term $o\bigl(\#C_k(x)\bigr)$.

By \cite[Theorem~6.1]{Lang1994}, we have an exact sequence
\begin{equation}\label{eq:exact-seq}
1\lra \BZ[I]^\times \lra (\BZ[I]/\fc_\eta)^{\times} \sto{\Phi}
I(\fc_\eta)/P_{\fc_\eta}\lra 1
\end{equation}
where $\Phi(\gamma)=(\gamma)\mod P_{\fc_\eta}$. 
Denote by $\pi(y,\msb,\fc)$ the number of prime ideals $\fp$ such that
$\bfN\fp\le y$ and $\fp\mod P_\fc\in\msb\subseteq I(\fc)/P_\fc$. Denote by $\mst_\eta=\Phi(\msa_\eta)$.  
Then 
\begin{equation}\label{eq:pi'-equals-pi}
\pi'(y,\msa_\eta,\fc_\eta)=\pi(y,\mst_\eta,\fc_\eta)
\qquad\text{and}\qquad\#\msa_\eta=\#\mst_\eta
\end{equation}
by noting that every prime ideal in a class of $\mst$ corresponds to exactly one primary prime element. 

Define
\[ \psi(y,\msb,\fc)=\sum_{\bfN\fa\le y\atop \fa\mod P_\fc\in \msb}\Lambda(\fc).\]
Then we have the standard asymptotic relation $\psi(y,\msb,\fc)\sim \log y\cdot\pi(y,\msb,\fc)$.

Therefore,
\begin{equation}\label{eq:num-2logx-C_k'(x,a,B)}
2\log x\cdot\#C_k'(x,\alpha,B)
\sim\sum^*_{M_1<\bfN\eta\le M_2} \psi(x/\bfN\eta,\mst_\eta,\fc_\eta)
\end{equation}
by \eqref{eq:num-C_k'(x,a,B)-M1-M2} and \eqref{eq:pi'-equals-pi}.
By the orthogonality of characters and the exact sequence \eqref{eq:exact-seq}, we get
\[\psi(y,\mst_\eta,\fc_\eta)=\frac4{\varphi(\eta)}\sum_{\chi} \psi(y,\chi)\sum_{\fa\mod P_{\fc_\eta}\in\mst_\eta} \ov{\chi(\fa)},\]
where $\chi$ runs over all characters of $I(\fc_\eta)/P_{\fc_\eta}$ and
\[\psi(y,\chi)=\sum_{\bfN\fa\le y} \Lambda(\fa)\chi(\fa).\]
Therefore,
\begin{equation}\label{eq:S1-S2}
2\log x\cdot\#C_k'(x,\alpha,B)\sim S_1+S_2,
\end{equation}
where
\[\begin{split}
S_1&=\sum^*_{M_1<\bfN\eta\le M_2}\frac{4\#\mst_\eta}{\varphi(\eta)} \psi(x/\bfN\eta,\chi_0),  \\
S_2&=\sum^*_{M_1<\bfN\eta\le M_2}\frac4{\varphi(\eta)}\sum_{\chi\neq\chi_0} \psi(x/\bfN\eta,\chi)\sum_{\fa\mod P_{\fc_\eta}\in\mst_\eta} \ov{\chi(\fa)}.
\end{split}\]

The main term is
\[\begin{split}
S_1&=2^{-k-\ell-2}\sum^*_{M_1<\bfN\eta\le M_2}\psi(x/\bfN\eta,\chi_0)
\qquad\text{by Lemma~\ref{lem:number-A-eta} and \eqref{eq:pi'-equals-pi}}\\
&\sim2^{-k-\ell-2}\sum^*_{M_1<\bfN\eta\le M_2}\log(x/\bfN\eta)\Li(x/\bfN\eta)\\
&\sim2^{-k-\ell-2}\log x\sum^*_{M_1<\bfN\eta\le M_2}
\Li(x/\bfN\eta)  \\
&\sim\frac{\log x\cdot\log\log x}{(k-1)\cdot2^{k+\ell+2}}\cdot\#T'_k(x)
\qquad\text{by Lemma~\ref{lem:Cremona}}\\
&\sim\frac{\log x\cdot\log\log x}{(k-1)\cdot2^{k\ell+3k+\binom k2}}\cdot\#C_{k-1}(x)
\qquad\text{by Lemma~\ref{lem:identify-Tk(x)} and \eqref{eq:number-Tk(x)}}\\
&\sim2^{-k\ell-3k-\binom k2}\log x\cdot\#C_k(x)
\qquad\text{by \eqref{eq:number-C-k}}.
\end{split}\]
By \eqref{eq:num-2logx-C_k'(x,a,B)} and Lemma~\ref{lem:identify-Ck}, this theorem is reduced to show that $S_2$ is an error term.

%Similar to \cite{CremonaOdoni1989}, we will split $S_2$ by possible Siegel zeros.
Denote by $\ff$ the conductor of the exceptional primitive conductor with $Z=256M_2$ in Page Theorem~\ref{pro:Page-theorem}.
Then $S_2=S_3+S_4$, where
\[\begin{split}
S_3&=\sum^*_{M_1<\bfN\eta\le M_2\atop \ff\mid\fc_\eta} \frac4{\varphi(\eta)}\sum_{\chi\neq\chi_0} \psi(x/\bfN\eta,\chi)\sum_{\fa\mod P_{\fc_\eta}\in\mst_\eta} \ov{\chi(\fa)},  \\
S_4&=\sum^*_{M_1<\bfN\eta\le M_2\atop \ff\nmid\fc_\eta} \frac4{\varphi(\eta)}\sum_{\chi\neq\chi_0} \psi(x/\bfN\eta,\chi)\sum_{\fa\mod P_{\fc_\eta}\in\mst_\eta} \ov{\chi(\fa)}.
\end{split}\]
We have
\[\begin{split}
S_3&\ll\sum^*_{M_1<\bfN\eta\le M_2\atop \ff\mid\fc_\eta}\psi(x/\bfN\eta,\chi_0)\ll x\sum^*_{M_1<\bfN\eta\le M_2\atop \ff\mid\fc_\eta}(\bfN\eta)^{-1}  \\
&=\frac x{\bfN\ff}\sum_{M_1<t\bfN\ff\le M_2}t^{-1}\sum^*_{\ff\mid \fc_\eta\atop\bfN\eta=t\bfN\ff}1\ll  \frac{x\log M_2}{\bfN\ff}.
\end{split}\]
By Page Theorem~\ref{pro:Page-theorem} for $Z=256M_2$, there is a positive constant $c_6$ such that the Siegel zero $\beta$ of the primitive character  with modulus $\ff$ has the property
\[\beta>1-\frac{c_6}{\log 256M_2}.\]
By Siegel Theorem~\ref{pro:Siegel-theorem} for $F=\BQ(I)$, there is a constant $c_4=c_4(2,1/200)>0$ such that
\[\beta\le 1-c_4(4\bfN\ff)^{-1/200}.\]
Therefore, $\bfN\ff\gg(\log M_2)^{100}$ and $S_3\ll x(\log M_2)^{-99}$ is an error term.

Since there is no Siegel zero in $S_4$, we can apply the explicit formula \eqref{eq:explicit-formula-of-psi-x-chi} with $T=(\bfN\eta)^4$ to all the $\psi(x/\bfN\eta,\chi)$ in $S_4$. Then we obtain
\[\begin{split}
\psi(x/\bfN\eta,\chi)\ll &x(\bfN\eta)^{-1}(\log x)^2 \exp
\biggl(-\frac{c_7\log(x/\bfN\eta)}{\log\bfN\eta}\biggr)\\
&+x(\bfN\eta)^{-5}(\log x)^2
+x^{1/4}(\bfN\eta)^{-1/4}\log(x/\bfN\eta)
\end{split}\]
and $S_4\ll S_5+S_6+S_7$, where
\[\begin{split}
S_5&=\sum^*_{M_1<\bfN\eta\le M_2\atop \ff\nmid\fc_\eta}
x(\bfN\eta)^{-1}(\log x)^2 \exp
\biggl(-\frac{c_7\log(x/\bfN\eta)}{\log\bfN\eta}\biggr), \\
&\ll x(\log x)^2\exp\bigl(-c_8(\log\log x)^{100}\bigr)\cdot
\sum^*_{M_1<\bfN\eta\le M_2\atop \ff\nmid\fc_\eta} (\bfN\eta)^{-1}  \\
&\ll x(\log x)^3\exp\bigl(-c_8(\log\log x)^{100}\bigr),\\
S_6&=\sum^*_{M_1<\bfN\eta\le M_2\atop \ff\nmid\fc_\eta}
x(\bfN\eta)^{-5}(\log x)^2\ll x(\log x)^2 M_1^{-3}\ll x(\log x)^{-200},  \\
S_7&=\sum^*_{M_1<\bfN\eta\le M_2\atop \ff\nmid\fc_\eta}
x^{1/4}(\bfN\eta)^{-1/4}\log(x/\bfN\eta)\ll x^{1/4}\log x\cdot M_2^{3/4}\ll x^{1/2}.
\end{split}\]
Hence $S_4$ is also an error term.
This finishes the proof.
\end{proof}

\section{Distribution result}
\label{Distribution result}

Assume that $\Sel_2(E/\BQ)\cong(\BZ/2\BZ)^2$.
Let $n=p_1\cdots p_k$ be an element in $\msq_k(x)$ with $p_1<\cdots<p_k$.
Then $n\in\msp_k(x)$ if and only if $h_4(n)=1$ and $h_8(n)\equiv\frac{d-1}4\bmod2$, where $d$ is a certain divisor of $n$.
As shown in the proof of Theorem~\ref{thm:main}(B), the rank of $\bfA=\bfA_n$ is $k-1$ or $k-2$.

Assume that $\rank\bfA=k-2$.
As shown in the proof of Theorem~\ref{thm:main}(B), $h_4(n)=1$ if and only if $\bfb_2\not\in\Im\bfA$.
In this case, $d=p_1^{s_1}\cdots p_k^{s_k}\equiv5\bmod8$, where $\Ker\bfA=\set{{\bf0},{\bf1},\bfd,\bfd+{\bf1}}$ and $\bfd=(s_1,\dots,s_k)^\rmT$.
We may assume that $s_k=0$.
By \cite[Theorem~3.3(ii)]{JungYue2011}, $h_8(n)=1$ if and only if
\begin{equation}\label{eq:d-d'-quartic-symbol}
\leg d{d'}_4\leg{d'}d_4=-1,
\end{equation}
where $d'=n/d$.

Assume that $\rank\bfA=k-1$.
Then $h_4(n)=1$, $\bfb_2\in\Im\bfA$ and $d=p_1^{s_1}\cdots p_k^{s_k}$, where $\bfA\bfd=\bfb_2$ and $\bfd=(s_1,\cdots,s_k)^\rmT$.
By \cite[Theorem~3.3(iii), (iv)]{JungYue2011}, $h_8(n)=1$ if and only if
\[\leg{2d}{d'}_4\leg{2d'}d_4=(-1)^{\frac{n-1}8}\]
where $d'=n/d$.

\begin{proof}[Proof of Theorem~\ref{thm:dist}]
For $k\ge 2$, let $\msb$ be the set of all symmetric $\bfB\in M_k(\BF_2)$ with rank $k-2$ and $\bfB{\bf1}={\bf0}$.
Let $\msi$ be the set of all vectors $\alpha=(\alpha_1,\dots,\alpha_k)$ with $\alpha_i\in\set{1,5,9,13}$ and $\alpha_1\cdots\alpha_k\equiv1\bmod8$.
Denote by $\msi_\bfB$ the set of all $\alpha\in\msi$ such that $\bfb(\alpha)\notin\Im\bfB$, where $\bfb(\alpha)=\biggl(\aleg{2}{\alpha_1},\dots,\aleg2{\alpha_k}\biggr)^\rmT$.
Since $\alpha_1\cdots\alpha_k\equiv1\bmod8$, we have $\bfb(\alpha)^\rmT{\bf1}=0$.
For any $\bfB\in\msb$ and $\alpha\in\msi_\bfB$, $C_k(x,\alpha,\bfB)$ is the set of all $n=p_1\cdots p_k\in \msp_k(x)$ satisfying
\begin{itemize}
\item $p_1<\cdots<p_k$ and $\bfA_n=\bfB$;
\item $p_i\equiv\alpha_i\bmod{16}$ for all $1\le i\le k$;
\item $\leg{p_i}{q_j}=1$ for all $1\le i\le k$ and $1\le j\le \ell$
\end{itemize}
by \eqref{eq:d-d'-quartic-symbol}.
Moreover, if $\bfB\in\msb$ and $\alpha\notin\msi_\bfB$, then $C_k(x,\alpha,\bfB)\cap\msp_k(x)=\emptyset$.
Therefore, the number $N_1(x)$ of those $n\in\msp_k(x)$ with $\rank\bfA_n=k-2$ is
\begin{equation}\label{num-N1(x)}
N_1(x)=\sum_{\bfB\in\msb}\sum_{\alpha\in\msi_\bfB}\#C_k(x,\alpha,\bfB)\sim 2^{-k\ell-3k-1-\binom k2}\cdot\#C_k(x)\cdot\sum_{\bfB\in\msb}\#\msi_\bfB
\end{equation}
by Theorem~\ref{thm:independence}.

Now we count the number of $\msi_\bfB$ with given $\bfB$.
Given $\bfb=(b_1,\cdots,b_k)^\rmT\not\in\Im\bfB$ with $\bfb^\rmT{\bf1}=0$, the number of $\alpha$ with $\bfb(\alpha)=\bfb$ is $2^k$.
This is because $\alpha_i=1,9$ if $b_i=0$ and $\alpha_i=5,13$ if $b_i=1$.
Since $\bfB$ is symmetric and $\bfB{\bf1}={\bf0}$, the size of $\Im\bfB\subset\CH_n:=\set{\bfu:{\bf1}^\rmT\bfu=0}$ is $2^{k-2}$.
If $\bfb^\rmT{\bf1}=0$ and $\rank(\bfB,\bfb)=k-1$, then $\bfb\in\CH_n-\Im\bfB$ has $2^{k-2}$ choices.
Consequently, $\#\msi_\bfB=2^{2k-2}$ and then
\[N_1(x)\sim 2^{-k\ell-k-3-\binom k2}\cdot\#C_k(x)\cdot\#\msb.\]

\begin{proposition}[{\cite{BrownCalkinJames2006}}]
Denote by $\msb_{k,r}$ the set of $k\times k$ symmetric matrices over $\BF_2$ with rank $r$. Then
\[\#\msb_{k,r}=u_{r+1} 2^{\binom{r+1}2}\cdot
\prod_{i=0}^{k-r-1}\frac{2^k-2^i}{2^{k-r}-2^i},\]
where $u_i$ is defined in Theorem~\ref{thm:dist}.
\end{proposition}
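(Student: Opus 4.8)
The plan is to peel off the radical of the associated bilinear form, which separates a purely linear-algebraic ``choice of subspace'' from the genuinely arithmetic count of nondegenerate forms. Writing $V=\BF_2^k$, I attach to a symmetric matrix $M\in\msb_{k,r}$ the form $B(x,y)=x^\rmT My$; since $M$ is symmetric its radical coincides with $\Ker M$, a subspace $W$ of dimension $k-r$, and $B$ descends to a nondegenerate symmetric form $\bar B$ on $V/W$. Conversely a subspace $W$ with $\dim W=k-r$ together with a nondegenerate symmetric form $\bar B$ on $V/W$ yields $B(x,y)=\bar B(\bar x,\bar y)$, whose Gram matrix in the standard basis is a symmetric matrix of rank exactly $r$. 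First I would verify that these two assignments are mutually inverse. This gives
\[\#\msb_{k,r}=\Big(\#\{\,W\subseteq V:\dim W=k-r\,\}\Big)\cdot N(r),\]
where $N(r)$ is the number of invertible symmetric $r\times r$ matrices over $\BF_2$. The number of $(k-r)$-dimensional subspaces of $V$ is the Gaussian binomial coefficient $\prod_{i=0}^{k-r-1}\frac{2^k-2^i}{2^{k-r}-2^i}$, which is precisely the product in the statement, so the whole proposition reduces to the identity $N(r)=u_{r+1}2^{\binom{r+1}2}$.

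To evaluate $N(r)$ I would set up a recursion by conditioning on the corner entry $M_{11}$ and reducing by congruence. If $M_{11}=1$, conjugating by $\bigl(\begin{smallmatrix}1&v^\rmT\\0&I\end{smallmatrix}\bigr)$ clears the first row and column, so $M$ is nondegenerate exactly when the Schur complement $M'+vv^\rmT$ is; letting $v$ range over $\BF_2^{r-1}$ and the complement over all invertible symmetric $(r-1)\times(r-1)$ matrices contributes $2^{r-1}N(r-1)$. If $M_{11}=0$ the first column must be nonzero, and because $GL_{r-1}(\BF_2)$ acts transitively on nonzero vectors while the congruence $\diag(1,g)$ preserves the count, I may normalize that column to $e_1$; the nondegenerate block $\bigl(\begin{smallmatrix}0&1\\1&b\end{smallmatrix}\bigr)$ then splits off and its Schur complement turns out to be an unconstrained symmetric $(r-2)\times(r-2)$ matrix, so each of the $2^{r-1}-1$ nonzero columns contributes $2\cdot2^{r-2}N(r-2)$. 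Adding the two cases gives
\[N(r)=2^{r-1}N(r-1)+2^{r-1}(2^{r-1}-1)N(r-2),\qquad N(0)=N(1)=1.\]

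Finally I would solve the recursion by induction. Since $\binom{r+1}2-\binom r2=r$ and $u_{r+1}/u_r$ equals $1$ for even $r$ and $1-2^{-r}$ for odd $r$, the claimed closed form is equivalent to the single-step ratios $N(r)/N(r-1)=2^r$ (for even $r$) and $N(r)/N(r-1)=2^r-1$ (for odd $r$); a one-line substitution shows the two-step recursion forces exactly these ratios once the formula is assumed for $r-1$ and $r-2$, and the base cases are immediate. I expect the main obstacle to lie in the case $M_{11}=0$: getting the normalization of the first column and the splitting of the hyperbolic-type $2\times2$ block right, and thereby producing the correct factor $2^{r-1}-1$, is where the alternating-versus-non-alternating dichotomy of symmetric forms over $\BF_2$ is implicitly handled, and it is the step most prone to miscounting. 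Combining this with the subspace count from the first paragraph completes the proof.
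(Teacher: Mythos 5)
Your argument is correct, and it is worth noting that the paper itself gives no proof of this proposition at all: it is simply imported from Brown--Calkin--James--King--Lockard--Rhoades, so your write-up is a genuine, self-contained substitute rather than a variant of an argument in the text. The two-step reduction is sound: since $M$ is symmetric, the radical of $B(x,y)=x^\rmT My$ is $\Ker M$, and the correspondence $M\leftrightarrow(W,\bar B)$ cleanly splits the count into the Gaussian binomial $\prod_{i=0}^{k-r-1}\frac{2^k-2^i}{2^{k-r}-2^i}$ times the number $N(r)$ of nonsingular symmetric $r\times r$ matrices. The recursion is also right: in the case $M_{11}=0$ the congruence by $\diag(1,g)$ does let you normalize the first column to $(0,e_1)^\rmT$, and the key computation is that the Schur complement of the block $\bigl(\begin{smallmatrix}0&1\\1&b\end{smallmatrix}\bigr)$ is exactly $M''$ with no correction term (because $BA^{-1}B^\rmT=0$ for $B=(0\ v)$), so $b$ and $v$ are free and the factor $2\cdot 2^{r-2}$ appears as you claim; this is indeed the step where a sign or cross-term error would be easiest to make, and it survives scrutiny. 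I checked the resulting recursion $N(r)=2^{r-1}N(r-1)+2^{r-1}(2^{r-1}-1)N(r-2)$ against $N(2)=4$, $N(3)=28$, $N(4)=448$, and the parity analysis of the ratios $f(r)/f(r-1)$ with $f(r)=u_{r+1}2^{\binom{r+1}2}$ closes the induction exactly as you say. The only point I would make explicit in a final version is that the count of nondegenerate symmetric forms on $V/W$ is independent of $W$ (via any choice of basis of the quotient), so that the product formula for $\#\msb_{k,r}$ is legitimate.
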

The left-top minor of $\bfB$ of order $k-1$ induces a bijection $\msb\to\msb_{k-1,k-2}$. So $\#\msb=\#\msb_{k-1,k-2}$ and we get
\[N_1(x)\sim 2^{-k\ell-k-3}(1-2^{1-k})u_{k-1}\cdot\#C_k(x).\]

The number $N_2(x)$ of $n\in\msp_k(x)$ with $\rank\bfA_n=k-1$ can be obtained similarly:
\[N_2(x)\sim 2^{-k-k\ell-2}u_k\cdot\#C_k(x).\]
We refer to our previous paper \cite{Wang2017} for more details.
This finishes the proof of this theorem.
\end{proof}

\noindent\textbf{Acknowledgement.}
The first author is supported by the National Natural Science Foundation of China (Grant No. 11801344) and Natural Science Foundation of Shaanxi Province (Grant No. 2020JQ-401).
The second author is supported by the National Natural Science Foundation of China (Grant No. 12001510).
The authors are greatly indebted to Professor Ye Tian for many instructions and suggestions.

%\bibliographystyle{alpha}
%\bibliography{../../mybib}

\begin{thebibliography}{BCJ{\etalchar{+}}06}

\bibitem[BCJ{\etalchar{+}}06]{BrownCalkinJames2006}
Morgan~V. Brown, Neil~J. Calkin, Kevin James, Adam~J. King, Shannon Lockard,
  and Robert~C. Rhoades.
\newblock Trivial {S}elmer groups and even partitions of a graph.
\newblock {\em Integers}, 6:A33, 17, 2006.

\bibitem[Cas98]{Cassels1998}
J.~W.~S. Cassels.
\newblock Second descents for elliptic curves.
\newblock {\em J. Reine Angew. Math.}, 494:101--127, 1998.
\newblock Dedicated to Martin Kneser on the occasion of his 70th birthday.

\bibitem[CO89]{CremonaOdoni1989}
J.~E. Cremona and R.~W.~K. Odoni.
\newblock Some density results for negative {P}ell equations; an application of
  graph theory.
\newblock {\em J. London Math. Soc. (2)}, 39(1):16--28, 1989.

\bibitem[Dav80]{Davenport1980}
Harold Davenport.
\newblock {\em Multiplicative number theory}, volume~74 of {\em Graduate Texts
  in Mathematics}.
\newblock Springer-Verlag, New York-Berlin, second edition, 1980.
\newblock Revised by Hugh L. Montgomery.

\bibitem[Fen96]{Feng1996}
Keqin Feng.
\newblock Non-congruent numbers, odd graphs and the {B}irch-{S}winnerton-{D}yer
  conjecture.
\newblock {\em Acta Arith.}, 75(1):71--83, 1996.

\bibitem[Fog61]{Fogels1961}
E.~Fogels.
\newblock On the zeros of {H}ecke's {$L$}-functions. {I}.
\newblock {\em Acta Arith.}, 7(2):87--106, 1961.

\bibitem[Fog63]{Fogels1963}
E.~Fogels.
\newblock \"{U}ber die {A}usnahmenullstelle der {H}eckeschen
  {$L$}-{F}unktionen.
\newblock {\em Acta Arith.}, 8:307--309, 1963.

\bibitem[Fuj98]{Fujiwara1998}
Masahiko Fujiwara.
\newblock {$\theta$}-congruent numbers.
\newblock In {\em Number theory ({E}ger, 1996)}, pages 235--241. de Gruyter,
  Berlin, 1998.

\bibitem[HB94]{HeathBrown1994}
D.~R. Heath-Brown.
\newblock The size of {S}elmer groups for the congruent number problem. {II}.
\newblock {\em Invent. Math.}, 118(2):331--370, 1994.
\newblock With an appendix by P. Monsky.

\bibitem[Hec81]{Hecke1981}
Erich Hecke.
\newblock {\em Lectures on the theory of algebraic numbers}, volume~77 of {\em
  Graduate Texts in Mathematics}.
\newblock Springer-Verlag, New York-Berlin, 1981.
\newblock Translated from the German by George U. Brauer, Jay R. Goldman and R.
  Kotzen.

\bibitem[HR95]{HoffsteinRamakrishnan1995}
Jeffrey Hoffstein and Dinakar Ramakrishnan.
\newblock Siegel zeros and cusp forms.
\newblock {\em Internat. Math. Res. Notices}, 6:279--308, 1995.

\bibitem[IK04]{IwaniecKowalski2004}
Henryk Iwaniec and Emmanuel Kowalski.
\newblock {\em Analytic number theory}, volume~53 of {\em American Mathematical
  Society Colloquium Publications}.
\newblock American Mathematical Society, Providence, RI, 2004.

\bibitem[IR90]{IrelandRosen1990}
Kenneth Ireland and Michael Rosen.
\newblock {\em A classical introduction to modern number theory}, volume~84 of
  {\em Graduate Texts in Mathematics}.
\newblock Springer-Verlag, New York, second edition, 1990.

\bibitem[JY11]{JungYue2011}
Hwanyup Jung and Qin Yue.
\newblock 8-ranks of class groups of imaginary quadratic number fields and
  their densities.
\newblock {\em J. Korean Math. Soc.}, 48(6):1249--1268, 2011.

\bibitem[Lan18]{Landau1918}
Edmund Landau.
\newblock \"{U}ber {I}deale und {P}rimideale in {I}dealklassen.
\newblock {\em Math. Z.}, 2(1-2):52--154, 1918.

\bibitem[Lan94]{Lang1994}
Serge Lang.
\newblock {\em Algebraic number theory}, volume 110 of {\em Graduate Texts in
  Mathematics}.
\newblock Springer-Verlag, New York, second edition, 1994.

\bibitem[LT00]{LiTian2000}
Delang Li and Ye~Tian.
\newblock On the {B}irch-{S}winnerton-{D}yer conjecture of elliptic curves
  {$E_D\colon y^2=x^3-D^2x$}.
\newblock {\em Acta Math. Sin. (Engl. Ser.)}, 16(2):229--236, 2000.

\bibitem[Maz77]{Mazur1977}
B.~Mazur.
\newblock Modular curves and the {E}isenstein ideal.
\newblock {\em Inst. Hautes \'{E}tudes Sci. Publ. Math.}, 47:33--186 (1978),
  1977.
\newblock With an appendix by Mazur and M. Rapoport.

\bibitem[Maz78]{Mazur1978}
B.~Mazur.
\newblock Rational isogenies of prime degree (with an appendix by {D}.
  {G}oldfeld).
\newblock {\em Invent. Math.}, 44(2):129--162, 1978.

\bibitem[Ono96]{Ono1996}
Ken Ono.
\newblock Euler's concordant forms.
\newblock {\em Acta Arith.}, 78(2):101--123, 1996.

\bibitem[OZ14]{OuyangZhang2014}
Yi~Ouyang and ShenXing Zhang.
\newblock On non-congruent numbers with 1 modulo 4 prime factors.
\newblock {\em Sci. China Math.}, 57(3):649--658, 2014.

\bibitem[OZ15]{OuyangZhang2015}
Yi~Ouyang and Shenxing Zhang.
\newblock On second 2-descent and non-congruent numbers.
\newblock {\em Acta Arith.}, 170(4):343--360, 2015.

\bibitem[Rei34]{Redei1934}
L.~R\'{e}dei.
\newblock Arithmetischer {B}eweis des {S}atzes \"{u}ber die {A}nzahl der durch
  vier teilbaren {I}nvarianten der absoluten {K}lassengruppe im quadratischen
  {Z}ahlk\"{o}rper.
\newblock {\em J. Reine Angew. Math.}, 171:55--60, 1934.

\bibitem[Rho09]{Rhoades2009}
Robert~C. Rhoades.
\newblock 2-{S}elmer groups and the {B}irch-{S}winnerton-{D}yer conjecture for
  the congruent number curves.
\newblock {\em J. Number Theory}, 129(6):1379--1391, 2009.

\bibitem[Sil09]{Silverman2009}
Joseph~H. Silverman.
\newblock {\em The arithmetic of elliptic curves}, volume 106 of {\em Graduate
  Texts in Mathematics}.
\newblock Springer, Dordrecht, second edition, 2009.

\bibitem[Wan16]{Wang2016}
Zhang~Jie Wang.
\newblock Congruent elliptic curves with non-trivial {S}hafarevich-{T}ate
  groups.
\newblock {\em Sci. China Math.}, 59(11):2145--2166, 2016.

\bibitem[Wan17]{Wang2017}
Zhang~Jie Wang.
\newblock Congruent elliptic curves with non-trivial {S}hafarevich-{T}ate
  groups: distribution part.
\newblock {\em Sci. China Math.}, 60(4):593--612, 2017.

\end{thebibliography}

\newcommand{\etalchar}[1]{$^{#1}$}

\end{document}